\tikzset{join/.code=\tikzset{after node path={%
\ifx\tikzchainprevious\pgfutil@empty\else(\tikzchainprevious)%
edge[every join]#1(\tikzchaincurrent)\fi}}}
\tikzset{>=stealth',/.append style={join},
         every join/.style={->}}
\newtheorem{thm}{Theorem}[section]
  \newtheorem{prop}[thm]{Proposition}
  \newtheorem{example}[thm]{Example}
  \newtheorem{defn}[thm]{Definition}
   \newtheorem{cor}[thm]{Corollary}
 \newtheorem{lem}[thm]{Lemma}
  \newtheorem{rem}[thm]{Remark}
  \newtheorem*{rem*}{Remark}
  \newtheorem*{rems*}{Remarks}
\title{\textbf{Purity Relative to Classes of Finitely Presented Modules }}
\author{\textbf{Akeel Ramadan Mehdi }  \\School of Mathematics, University of Manchester, \\Manchester M13 9PL, England, United Kingdom.\\E-mail address: Akeel.Mehdi@postgrad.manchester.ac.uk}
\date{\textbf{8-5-2012}}
\begin{document}
\newcommand{\To}{\longrightarrow}
\newcommand{\h}{\mathcal{H}}
\newcommand{\s}{\mathcal{S}}
\newcommand{\A}{\mathcal{A}}
\newcommand{\K}{\mathcal{K}}
\newcommand{\B}{\mathcal{B}}
\newcommand{\W}{\mathcal{W}}
\newcommand{\M}{\mathcal{M}}
\newcommand{\Lom}{\mathcal{L}}
\newcommand{\T}{\mathcal{T}}
\newcommand{\F}{\mathcal{F}}

\maketitle

\begin{abstract}

 We investigate purities determined by classes of finitely presented modules including the correspondence between purities for left and right modules.  We show some cases where purities determined by matrices of given sizes are different.  Then we consider purities over finite-dimensional algebras, giving a general description of the relative pure-injectives which we make completely explicit in the case of tame hereditary algebras.
\end{abstract}

\vspace{6pt}

\noindent {\bf Keywords:}  purity, pure-injective, pure-projective, dual module, $(m,n)$-purity, finite-dimensional algebra, Auslander-Reiten translate, Ziegler closure, full support closure, tame hereditary algebra, adic module, generic module, Pr\"{u}fer module.
\vspace{6pt}

\noindent {\bf MSC 2010:}  16D10; 16G10; 16L30.

\section*{Introduction:}$\quad \,\,\,\,\,\,$ Purity for modules over general rings was defined in \cite{Coh959} and many relative versions of purity have been considered since then.  We consider those purities which, like the original one, are determined by classes $S$ of finitely presented modules.  We present a number of characterisations of $S$-pure-exact sequences and of the associated classes of relatively projective and relatively injective modules.  We also show the relation between the purity for left modules which is determined by $S$ and the purity for right modules determined by $S$; this is said most directly in terms of the matrices presenting the modules in $S$.

Al-Kawarit  and Cauchot  \cite{AlkaCo11} gave conditions under which purities determined by matrices of certain sizes are different.  We obtain related results over semiperfect rings and we also consider this question in detail over finite-dimensional algebras.

Over finite-dimensional algebras we give a description of the $S$-pure-injective modules in terms of the type-definable category generated by $\tau S$ and, in the case of tame hereditary algebras, and using results from \cite{Pre09} and \cite{Rin98}, we give a complete description of these modules.

Finally we give a number of characterisations of rings whose indecomposable modules are $S$-pure-injective.

All rings in this paper are associative with unity, and all
modules are unital. We write $_{R}M$ $(M_{R})$ to indicate a left  (right) $R$-module,
and we use $R$-Mod denote the category of all left $R$-modules. The endomorphism ring of a module $M$ is denoted by {\rm End}$_{R}(M)$.  We use Add$(T)$ $($resp., add$(T))$ to denote the class of all modules that are direct
summands of direct sums $($resp. finite direct sums$)$ of modules
from $T.$ Also, we use Prod$(T)$ to denote the class of all modules that are direct summands of direct products of modules
from $T.$ We use the notation $\underset{n\times m}{M}(R)$ for the set
of all $n\times m$ matrices over $R$. All matrices in this paper are matrices with finitely many rows and finitely many
columns and all classes of modules are closed under isomorphisms.   A module is said to be finitely
presented if it is the factor module of a free  module of rank
$n$ modulo a $m$-generated submodule, for some $n,m\in\mathbb{Z}^{+}.$

 Let $S$ be a class of left $R$-modules. Following Warfield \cite{War69(b)}, an exact sequence $0\rightarrow A\overset{f}{\rightarrow}B\overset{g}{\rightarrow}C\rightarrow0$
of left $R$-modules is said to be $S$-pure if the sequence $0\rightarrow \textup{Hom}_{R}\left(M,A\right)\rightarrow \textup{Hom}_{R}\left(M,B\right)\rightarrow\textup{Hom}_{R}\left(M,C\right)\rightarrow0$
is exact, for all $M\in S;$ in this case $f$  is said to be an $S$-pure monomorphism and $g$ is said to be an $S$-pure epimorphism. Note that $S$-pure=$S\cup \{_RR\}$-pure. If  $S$=$R$-Mod  then a short exact sequence of modules is   $S$-pure if and only if it is   pure.   A module $M$ is said to be
$S$-pure-injective $($resp. $S$-pure-projective$)$, if $M$ is injective $($resp. projective$)$ relative to every $S$-pure exact
sequence of modules. Clearly the class  of $S$-pure-injective $($resp. $S$-pure-projective$)$ modules is closed under direct summands and direct products $($resp. direct sums$).$

This paper contains five sections.
In section 1,  many characterizations
and properties of $S$-purity, $S$-pure-injectivity and $S$-pure-projectivity  are given. For example, we prove that,  if $S$ is a class of finitely presented  modules then  a module
$M$ is $S$-pure-projective if and only if
it  is  projective relative to every $S$-pure
exact sequence $0\rightarrow K\rightarrow E\rightarrow F\rightarrow0$
   where  $E$ is $S$-pure-injective. Dually, $M$  is $S$-pure-injective if and only if  $M$ is  injective relative to every $S$-pure exact sequence $0\rightarrow K\rightarrow P\rightarrow L\rightarrow0$ where  $P$ is $S$-pure-projective.

In \cite{PuPrRo99} purity and  $S$-purity are compared. In particular, it is  proved that   $S$-purity and purity are equivalent if and only if   $S$-pure-injectivity  and pure-injectivity are equivalent if and only if  $R$-mod$\subseteq$add$(S\cup \{_RR\})$ \cite[Theorem 2.5, p.2136]{PuPrRo99}.
 In section 2 of this paper we  compare $S$-purity and $T$-purity for arbitrary  classes $S$ and $T$ of finitely presented left $R$-modules. For example, in Theorem~\ref{thm:2.1(2.15),(2.17),cor(2.18),cor(1.6),2report,p.14-16,p.5} we prove that if  $S$ and $T$ are classes of finitely presented left $R$-modules,  then the following statements are equivalent: $\left(1\right)$ every $T$-pure short exact sequence of
left $R$-modules is $S$-pure; $\left(2\right)$ $S\subseteq \textup{add}(T\cup\{_{R}R\});$  $\left(3\right)$ $D^*_{\mathcal G}\subseteq $Prod$(({D_{\mathcal H}}\cup{ R_{R}})^*)$ where $^*$ denotes the dual of a module $;\left(4\right)$ the corresponding assertions for  right modules. Also, in Proposition~\ref{thm:2.7(3.20),blue,p.5} we prove that if each  indecomposable direct summand of a module in $T$ has  local endomorphism ring and each module in $T$ is a
direct sum of indecomposable modules then   every $S$-pure short exact sequence modules
is $T$-pure if and only if each indecomposable direct summand of a module in $T$
is a direct summand of a module in $S\cup\{_{R}R\}.$

In section 3, we study $(n,m)$-purity over semiperfect rings.
In Theorem~\ref{thm:3.13(3.13),2report,p.20} we give a generalization of   \cite[Theorem 3.5(1), p.3888]{AlkaCo11} in which we prove that if  $(n,m)$ and $(r,s)$
are any two  pairs of positive integers such that  $n\neq r$ and if   one of the following two conditions is satisfied: $\left(a\right)$  $R$ is   semiperfect  and   there exists an  ideal
$I$ of $R$ with  $\textup{gen}(I_R)=$$\textup{max}\{n,r\}$  and $I\subseteq e_{j}R$
for some local idempotent $e_{j}$; $\left(b\right)$ $R$ is   Krull-Schmidt  and   there exists a right ideal
$I$ of $R$ with  $\textup{gen}(I)=$$\textup{max}\{n,r\}$ and $I\subseteq e_{j}R$
for some local idempotent $e_{j}$, then: $\left(1\right)$ $(m,n)$-purity and $(s,r)$-purity
of short exact sequences of left $R$-modules are not equivalent; $\left(2\right)$ $(n,m)$-purity and $(r,s)$-purity
of short exact sequences of right $R$-modules are not equivalent.

In section 4, we study purity over finite-dimensional algebras. Firstly, we compare purities over the Kronecker algebra over an algebraically closed field $k$. In Proposition~\ref{prop:4.4(new)} we prove that if  $R$ is  a finite-dimensional algebra over a field $k$ and it is  not of finite representation type, then for every $r\in \mathbb{Z}^{+}$, there is $n>r$ such that  $(\aleph_{0},n)$-purity$\neq (\aleph_{0},r)$-purity for  left $R$-modules. Let $\mathcal H$ be a set of
matrices over a tame hereditary
finite-dimensional algebra $R$ over a field $k$. Conditions under which the generic module is $L_{\mathcal H}$-pure-injective are given in Proposition~\ref{prop:4.10(prop 4.49, p.44)}. Finally,  we give a complete description of the full support topology closure of any  class of indecomposable finite-dimensional modules over a tame hereditary
finite-dimensional algebra $R$ over a field $k$.

In the last section  we give a condition on a left $R$-module $M$ such that every  $S$-pure submodule of $M$ is a direct summand and prove that  such module is a direct sum of indecomposable submodules. As a corollary of this result we give a characterizations of rings over which every indecomposable left $R$-module is $S$-pure-projective.

\section{Purities}

\subparagraph*{\textmd{Let $n,m\in \mathbb Z^{+}.$   An   $R$-module $M$ is said to be $(n,m)$-presented if it is the factor module of the module $R^n$ modulo an $m$-generated submodule. Let $H$ be an $n\times m$ matrix over  $R$. Then right $($resp. left$)$ multiplication by $H$ determines a
homomorphism $\rho_{H}:{_RR}^{n}\rightarrow{_RR}^{m}$ $($resp. $\lambda_{H}:R_{R}^{m}\rightarrow R_{R}^{n}).$  Then $H$ determines
the $(m,n)$-presented left $R$-module $R^{m}/$im$(\rho_H)$; we will denote
it by $L_{H}$. Also,  $H$ determines the $(n,m)$-presented
right $R$-module $R^{n}/$im$(\lambda_H);$ we will denote it by $D_{H}$.
Let ${ \mathcal H }$  be a set of matrices over a ring $R$; we will denote by
$L_{ \mathcal H }$ the class of left $R$-modules $\{L_{H}\mid H\in { \mathcal H } \}$
and by $D_{ \mathcal H }$ the class of right $R$-modules $\{D_{H}\mid H\in$${ \mathcal H }$$\}.$  In view  of proposition~\ref{cor:1.18(2.5),2report,p.11} below we may, where convenient,  interpret $L_{\emptyset}$ as $\{_RR\}$ and  $D_{\emptyset}$ as $\{R_R\}$, }}

\subparagraph*{\textmd{ The following theorem collects together and extends results from the literature $($in particular see  \cite{Fac98} and \cite{Wis91} $)$. A proof  can be found in the author's thesis \cite{Meh}.}}

\begin{thm}\label{thm:(1.3),2report,p3}
Let R be an algebra over a commutative ring K and let E be an injective
cogenerator for K-modules. Let $S$ be a class of finitely presented left $R$-modules, let ${ \mathcal H }$  be a set of matrices over $R$ such that $L_{ \mathcal H }$-purity=S-purity
and let $\Sigma:$$0\rightarrow A\overset{f}{\rightarrow}B\overset{g}{\rightarrow}C\rightarrow0$
be an exact sequence of left $R$-modules. Then the following statements
are equivalent.

$\left(1\right)$ $\Sigma$ is an $S$-pure exact sequence of left
R-modules.

$\left(2\right)$ For any two positive integers n,m,  for any $n\times m$
matrix H in $ \mathcal H$ and  for all $\bar{a}\in A^{n},$ if the matrix equation
H$\bar{x}$=f$\bar{a}$ has a solution in $B^{m}$ then the
equation H$\bar{x}$=$\bar{a}$ has a solution in $A^{m}.$

$\left(3\right)$ The sequence  $0\rightarrow M\otimes_{R}A\rightarrow M\otimes_{R}B\rightarrow M\otimes_{R}C\rightarrow0$
is exact, for all $M\in D_ \mathcal H.$

$\left(4\right)$ For any two positive integers n,m,  for any $n\times m$
matrix H in $ \mathcal H$ and  for all $\bar{c}\in C^{m},$ if $H\bar{c}$=$0,$
then there is $\bar{b}\in B^{m}$ with $g(\bar{b})$=$\bar{c}$ and
$H\bar{b}$=$0.$

$\left(5\right)$ For any two positive integers n,m and for any $n\times m$
matrix H in  $ \mathcal H,$ for every commutative diagram of left R-modules
\[\begin{tikzpicture}
  \matrix (m) [matrix of math nodes, row sep=3em, column sep=3em]
    { \ &R^n&R^m \\
      0 &A & B \\ };

\path[->]
(m-1-2)  edge node[auto]  {$\scriptstyle\ ^{\rho_{H}}$}  (m-1-3);
\path[->]

 (m-1-2) edge  node[left] {$\scriptstyle\alpha$} (m-2-2)
(m-2-1)  edge node[auto]  {$\scriptstyle$}  (m-2-2);
\path[->]
(m-2-2)  edge node[auto]  {$\scriptstyle\ f$}  (m-2-3);
\path[->]
(m-1-3) edge node[left]  {$\scriptstyle$} (m-2-3);

\end{tikzpicture}\]  there
exists a homomorphism $\beta:R^{m}\rightarrow A$ such that $\alpha$=$\beta \rho_H.$

$\left(6\right)$ The dual exact sequence of right R-modules $0\rightarrow C^{*}\rightarrow B^{*}\rightarrow A^{*}\rightarrow0$
is $D_{\mathcal H}$-pure, where $M^{*}$={\rm Hom}$_{K}(M,E).$\end{thm}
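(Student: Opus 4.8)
The plan is to establish this as a cycle of implications among the six equivalent statements, using the solution-set characterization of $S$-purity as the hub. The key structural facts I would exploit are: first, that $L_{\mathcal H}$-purity equals $S$-purity by hypothesis, so I may replace $S$ throughout by the concrete class $\{L_H \mid H \in \mathcal H\}$; second, that each $L_H$ is $(m,n)$-presented via the presentation $R^n \xrightarrow{\rho_H} R^m \to L_H \to 0$, which makes $\textup{Hom}_R(L_H,-)$ computable in terms of solvability of the linear system $H\bar x = \bar a$. The main work is purely a matter of translating one formulation into the next, so I would organize the proof as $(1)\Leftrightarrow(5)$, then $(5)\Leftrightarrow(2)$, then $(2)\Leftrightarrow(4)$, and finally the tensor/dual statements $(1)\Leftrightarrow(3)$ and $(3)\Leftrightarrow(6)$.

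First I would prove $(1)\Leftrightarrow(5)$. Applying $\textup{Hom}_R(L_H,-)$ to $\Sigma$ and using the projective presentation of $L_H$, a homomorphism $L_H \to C$ lifts to $L_H \to B$ exactly when the relevant diagram of the form displayed in $(5)$ can be completed; here $\alpha: R^n \to A$ records the obstruction and $\beta: R^m \to A$ is the sought factorization through $\rho_H$. So $S$-purity of $\Sigma$ (exactness of the $\textup{Hom}$-sequence for every $M = L_H$) is precisely the diagram-completion property in $(5)$. Next, $(5)\Leftrightarrow(2)$ is the linear-algebraic reading of the same diagram: a homomorphism $R^n \to A$ is a tuple $\bar a \in A^n$, factoring through $\rho_H$ means finding $\bar x \in A^m$ with $H\bar x = \bar a$, and the diagram's commutativity over $B$ says precisely that $H\bar x = f\bar a$ is solvable in $B^m$. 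I would make explicit the identifications $\textup{Hom}_R(R^n, A) \cong A^n$ and the action of $\rho_H$ so that $(5)$ and $(2)$ become notational variants of each other.

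For $(2)\Leftrightarrow(4)$ I would pass between the ``$A$-side'' and ``$C$-side'' formulations using exactness of $\Sigma$. Given $\bar a \in A^n$ with $H\bar x = f\bar a$ solvable by some $\bar b \in B^m$ in $(2)$, the element $g(\bar b)$ lies in $C^m$ and satisfies $H\,g(\bar b) = g(H\bar b) = g(f \bar a) = 0$; conversely starting from $\bar c \in C^m$ with $H\bar c = 0$ as in $(4)$, I lift $\bar c$ to some $\bar b \in B^m$, observe $H\bar b \in \ker g = \textup{im}\,f$, and recover the corresponding $\bar a \in A^n$. Chasing these correspondences shows the solvability condition in $A^m$ of $(2)$ is equivalent to the existence of a $g$-preimage with $H\bar b = 0$ in $(4)$; this is the one step where I expect to spend the most care, since I must track the distinction between the row-count $n$ and column-count $m$ and make sure the quantifiers over $\bar a$ and $\bar c$ correspond correctly through $f$ and $g$.

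Finally, the tensor characterization $(1)\Leftrightarrow(3)$ and the dual characterization $(3)\Leftrightarrow(6)$ I would obtain from the adjunction between $\textup{Hom}_K(-,E)$ and the tensor product together with the faithfulness of $E$ as an injective cogenerator. Since $E$ is an injective cogenerator for $K$-modules, a sequence of $K$-modules is exact if and only if its $K$-dual is exact, and $\textup{Hom}_K(M \otimes_R N, E) \cong \textup{Hom}_R(N, \textup{Hom}_K(M,E)) = \textup{Hom}_R(N, M^*)$ naturally in $N$. Applying this with $M = D_H$ and noting the standard identification $\textup{Hom}_R(L_H, -)$ with $\textup{Hom}_{R^{\mathrm{op}}}(D_H, (-)^*)$ coming from the shared presenting matrix $H$, exactness of the $D_H \otimes_R -$ sequence in $(3)$ becomes equivalent both to $S$-purity in $(1)$ and to $D_{\mathcal H}$-purity of the dual sequence $0 \to C^* \to B^* \to A^* \to 0$ in $(6)$. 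The main obstacle in this block will be verifying that the same matrix $H$ presents $L_H$ as a left module and $D_H$ as a right module in compatible ways, so that the left-module Hom-condition dualizes exactly to the right-module tensor/Hom-condition; once that naturality is pinned down, the cogenerator property closes the equivalences.
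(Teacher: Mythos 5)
The paper itself contains no proof of this theorem --- it explicitly defers to the author's thesis and to the literature (Facchini, Wisbauer) --- so your argument has to be judged on its own merits rather than against a printed proof. Most of it holds up. The backbone $(1)\Leftrightarrow(5)\Leftrightarrow(2)\Leftrightarrow(4)$ is correct and standard: under the identification $\mathrm{Hom}_R(R^n,A)\cong A^n$ the factorization $\alpha=\beta\rho_H$ is exactly solvability of $H\bar{x}=\bar{a}$ in $A^m$, and the chase between (2) and (4) works once you include the correction step (lift $\bar{c}$ to $\bar{b}_0$, produce $\bar{a}$ with $H\bar{b}_0=f\bar{a}$, solve $H\bar{x}_0=\bar{a}$ in $A^m$, and replace $\bar{b}_0$ by $\bar{b}_0-f\bar{x}_0$). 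Likewise $(3)\Leftrightarrow(6)$ via hom-tensor adjunction plus the injective-cogenerator property of $E$ is correct, with one caveat: for (6) you need the adjunction in the $D_H$-variable, $\mathrm{Hom}_K(D_H\otimes_R N,E)\cong \mathrm{Hom}_{R^{\mathrm{op}}}(D_H,N^{*})$, whereas the isomorphism you display, $\mathrm{Hom}_K(D_H\otimes_R N,E)\cong\mathrm{Hom}_R(N,D_H^{*})$, leads instead to relative injectivity of $D_H^{*}$ (the content of Proposition~\ref{prop:26(2.13),2report,p.13}), not to $D_{\mathcal H}$-purity of the dual sequence. Both adjunctions are standard, so this is a wording issue, not a mathematical one.

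The genuine gap is the bridge to (3). You justify $(1)\Leftrightarrow(3)$ by a ``standard identification of $\mathrm{Hom}_R(L_H,-)$ with $\mathrm{Hom}_{R^{\mathrm{op}}}(D_H,(-)^{*})$''. No such natural identification exists: for the $1\times 1$ matrix $H=0$ one has $L_H={_RR}$ and $D_H=R_R$, and the two functors are $N\mapsto N$ and $N\mapsto N^{*}$, which are not isomorphic. What is true is that the two functors respond to $\Sigma$ in equivalent ways --- but that equivalence of exactness is precisely the assertion $(1)\Leftrightarrow(6)$ (equivalently $(1)\Leftrightarrow(3)$) that you are trying to prove, so as written this step is circular. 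The repair is cheap and stays inside your framework: since you already have $(1)\Leftrightarrow(2)$, prove $(2)\Leftrightarrow(3)$ directly. Tensoring the presentation $R^m\overset{\lambda_H}{\longrightarrow}R^n\rightarrow D_H\rightarrow 0$ with a left module $X$ gives $D_H\otimes_R X\cong X^n/HX^m$; by right-exactness of the tensor product, exactness of $0\rightarrow D_H\otimes_R A\rightarrow D_H\otimes_R B\rightarrow D_H\otimes_R C\rightarrow 0$ reduces to injectivity of $A^n/HA^m\rightarrow B^n/HB^m$, i.e.\ to the statement that $f\bar{a}\in HB^m$ implies $\bar{a}\in HA^m$ --- which is verbatim condition (2). (Alternatively, view multiplication by $H$ as a map of short exact sequences $\Sigma^m\rightarrow\Sigma^n$; its kernels are $\mathrm{Hom}_R(L_H,-)$ applied to $\Sigma$, its cokernels are $D_H\otimes_R\Sigma$, and the snake lemma shows the connecting map vanishes if and only if either (4) or (3) holds, giving both equivalences at once.) With that substitution your cycle closes and the proof is complete.
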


We retain the notation $M^{*}$   for the dual of a module with respect to    $_{K}E$ as above. Let $T$ be a class of left $R$-modules.
Note that if $S\subseteq T\subseteq R$-Mod then every $T$-pure   exact sequence of left $R$-modules
is $S$-pure, so  $S$-pure-injective  implies
$T$-pure-injective   and  $S$-pure-projective  implies
$T$-pure-projective.

\begin{prop}\label{cor:1.18(2.5),2report,p.11}(as \cite[Proposition 1, p.700]{War69(a)})
Let $S$ be a class of finitely presented left R-modules and let M be a left R-module.
Then:

$\left(1\right)$ There exists an $S$-pure exact sequence of left
R-modules $0\rightarrow K\rightarrow F\rightarrow M\rightarrow0$
with F being a direct sum of copies of modules in $S\cup\{_{R}R\}.$

$\left(2\right)$ \textup{Add}$(S\cup\{_{R}R\})$  is the class of  $S$-pure-projective left R-modules.

\end{prop}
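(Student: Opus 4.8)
The plan is to prove the two parts of Proposition~\ref{cor:1.18(2.5),2report,p.11} more or less in the standard Warfield style, adapted to the relative setting. For part $(1)$, I would build the free-type cover explicitly: take the set $X$ of all homomorphisms $x\colon M_x\to M$ with $M_x\in S\cup\{{}_RR\}$, form $F=\bigoplus_{x\in X} M_x$, and let $\varphi\colon F\to M$ be the map induced by the $x$'s on each summand. Because $X$ includes every map ${}_RR\to M$ (equivalently, every element of $M$), the map $\varphi$ is surjective, so we obtain an exact sequence $0\to K\to F\overset{\varphi}{\to} M\to0$ with $K=\ker\varphi$. The real content is to check that this sequence is $S$-pure, i.e.\ that $\textup{Hom}_R(N,F)\to\textup{Hom}_R(N,M)$ is onto for every $N\in S$. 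But that is immediate from the construction: any $h\colon N\to M$ with $N\in S$ is one of the indexing maps $x\in X$, and the inclusion of the corresponding summand $N\hookrightarrow F$ composed with $\varphi$ recovers $h$, giving the required lift.

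For part $(2)$, I would prove the two inclusions separately. First, every module in $\textup{Add}(S\cup\{{}_RR\})$ is $S$-pure-projective: each $M\in S$ is trivially $S$-pure-projective since by definition the Hom-sequence stays exact for the modules of $S$, and ${}_RR$ is projective hence $S$-pure-projective; then I would invoke the fact, recorded in the introduction, that the class of $S$-pure-projectives is closed under direct sums and direct summands, which closes this direction. For the reverse inclusion, suppose $M$ is $S$-pure-projective. Apply part $(1)$ to get an $S$-pure exact sequence $0\to K\to F\overset{\varphi}{\to} M\to0$ with $F\in\textup{Add}(S\cup\{{}_RR\})$. Since $M$ is projective relative to every $S$-pure epimorphism, the identity $\textup{id}_M$ lifts through $\varphi$, so the sequence splits and $M$ is a direct summand of $F$, hence $M\in\textup{Add}(S\cup\{{}_RR\})$.

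The step I expect to require the most care is verifying the $S$-purity of the constructed sequence in part $(1)$, specifically making sure the lifting property is checked against all $N\in S$ and not merely against the modules that happen to index the summands. The key observation that makes this work is that $S$ is closed under isomorphism, so every map $h\colon N\to M$ from a module isomorphic to a member of $S$ is literally one of the indexing data, and the splitting of the summand over $\varphi$ supplies the lift; there is no obstruction from having to patch lifts together because a single summand already suffices. A minor technical point worth stating is the identification of ${}_RR$-maps into $M$ with elements of $M$, which is what guarantees surjectivity of $\varphi$ and justifies including $\{{}_RR\}$ in the generating class even when $S$ itself need not contain a generator.
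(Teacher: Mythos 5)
Your proof is correct and is essentially the argument the paper relies on: the paper gives no proof of its own, deferring to \cite[Proposition 1, p.700]{War69(a)}, and Warfield's proof is exactly your construction (index a direct sum by homomorphisms from members of $S\cup\{{}_RR\}$ into $M$, observe the evaluation map is a split-on-$S$-summands epimorphism, then in part (2) combine closure of the $S$-pure-projectives under sums and summands with the splitting of the sequence from part (1)). The one point to tidy is set-theoretic: since $S$ is closed under isomorphism it is in general a proper class, so your index collection $X$ should range over homomorphisms from a fixed \emph{set} of isomorphism-class representatives of $S\cup\{{}_RR\}$ (which exists because all members are finitely presented), and the lift of an arbitrary $h\colon N\to M$ with $N\in S$ is then obtained by precomposing the inclusion of the summand indexed by $h\theta$ with the isomorphism $\theta^{-1}$, where $\theta\colon N'\to N$ identifies $N$ with its representative $N'$.
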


\begin{cor}\label{cor:1.19(2.6),2report,p.11}
  Let $S$ be a class of finitely presented left R-modules  and let $ \mathcal H$    be a set of matrices over  R  such that $L_{ \mathcal H }$-purity=S-purity. Then for any left R-module N there is an $S$-pure
monomorphism $\alpha:N\rightarrow F^{*}$ such that F is a direct
sum of copies of modules in $D_{\mathcal H}\cup\{R_{R}\}.$ In particular,  see Theorem~\ref{thm:28(4.27),black,p.22}, $F^{*}$ is $S$-pure-injective.\end{cor}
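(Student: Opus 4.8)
The plan is to obtain $\alpha$ by dualising an $S$-pure projective presentation of the character module $N^{*}$ and then splicing in the canonical biduality map. First I would form the right $R$-module $N^{*}=\mathrm{Hom}_{K}(N,E)$. Each module $D_{H}$ is finitely presented, so the right-hand analogue of Proposition~\ref{cor:1.18(2.5),2report,p.11}, applied to the class $D_{\mathcal H}$ of finitely presented right $R$-modules, produces a $D_{\mathcal H}$-pure exact sequence of right $R$-modules
\[
0\to K\to F\xrightarrow{p} N^{*}\to 0,
\]
in which $F$ is a direct sum of copies of modules in $D_{\mathcal H}\cup\{R_{R}\}$. This already fixes $F$ to have exactly the shape required by the statement.

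The next step is to apply $(-)^{*}=\mathrm{Hom}_{K}(-,E)$. Since $E$ is injective over $K$ this functor is exact, so we obtain an exact sequence of left $R$-modules
\[
0\to N^{**}\xrightarrow{p^{*}} F^{*}\to K^{*}\to 0.
\]
The crucial claim is that this sequence is $S$-pure. I would deduce it from the left--right symmetric form of Theorem~\ref{thm:(1.3),2report,p3}, namely that the dual of a $D_{\mathcal H}$-pure exact sequence of right modules is an $L_{\mathcal H}$-pure exact sequence of left modules; since $L_{\mathcal H}$-purity coincides with $S$-purity by hypothesis, $p^{*}\colon N^{**}\to F^{*}$ is an $S$-pure monomorphism.

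Finally I would precompose with the evaluation homomorphism $\sigma_{N}\colon N\to N^{**}$. Because $E$ is a cogenerator $\sigma_{N}$ is a monomorphism, and by the triangle identity $(\sigma_{N})^{*}\circ\sigma_{N^{*}}=\mathrm{id}_{N^{*}}$ the map $(\sigma_{N})^{*}$ is a split epimorphism, whence $\sigma_{N}$ is a pure monomorphism; as $S$ consists of finitely presented modules, every pure-exact sequence is $S$-pure, so $\sigma_{N}$ is itself $S$-pure. Setting $\alpha=p^{*}\circ\sigma_{N}\colon N\to F^{*}$ then gives a monomorphism, and it is $S$-pure because a composite of $S$-pure monomorphisms is $S$-pure: for $M\in S$ one lifts a map $M\to F^{*}/\sigma_{N}(N)$ through the exact sequence of cokernels
\[
0\to N^{**}/\sigma_{N}(N)\to F^{*}/\sigma_{N}(N)\to F^{*}/N^{**}\to 0,
\]
using the two $S$-purities in turn. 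The asserted $S$-pure-injectivity of $F^{*}$ is then quoted from Theorem~\ref{thm:28(4.27),black,p.22}. I expect the only real obstacle to be the $S$-purity of the dualised sequence: this is exactly the point where the left--right symmetry of the matrix conditions $(2)$ and $(4)$ of Theorem~\ref{thm:(1.3),2report,p3} must be invoked, with the roles of $L_{H}$ and $D_{H}$ correctly interchanged.
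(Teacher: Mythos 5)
Your proposal is correct and follows essentially the same route as the paper's proof: dualise a $D_{\mathcal H}$-pure presentation $0\to G\to F\to N^{*}\to 0$ obtained from the right-hand version of Proposition~\ref{cor:1.18(2.5),2report,p.11}, invoke the right-hand version of Theorem~\ref{thm:(1.3),2report,p3} to see that $N^{**}\to F^{*}$ is $S$-pure, and compose with the canonical pure (hence $S$-pure) monomorphism $N\to N^{**}$. The only difference is that you prove two facts the paper simply quotes or calls clear --- the purity of $\sigma_{N}$ (via the triangle identity and the splitting of its dual, where the paper cites Facchini) and the closure of $S$-pure monomorphisms under composition --- and both of your arguments are sound.
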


\begin{proof}
Let $N$ be any left $R$-module. By the right hand version of Proposition~\ref{cor:1.18(2.5),2report,p.11}, there is a $D_{\mathcal H}$-pure
exact sequence of right $R$-modules \textit{$0\rightarrow G\overset{f}{\rightarrow}F\overset{g}{\rightarrow}N^{*}\rightarrow0$
}where \textit{ $F$ }is a direct sum of copies of modules in $D_{\mathcal H}\cup\{R_{R}\}.$
By the right hand version of Theorem~\ref{thm:(1.3),2report,p3}, the dual exact sequence
of left $R$-modules \textit{$0\rightarrow N^{**}\overset{g^{*}}{\rightarrow}F^{*}\overset{g^{*}}{\rightarrow}G^{*}\rightarrow0$
is $L_{\mathcal H}$-}pure. The canonical monomorphism $\varphi_{N}:N\rightarrow N^{**}$ is pure $($see, e.g.,  \cite[Corollary 1.30, p.17]{Fac98}$)$
and hence it is \textit{$L_{\mathcal H}$-}pure.  Since a composition of $L_{\mathcal H}$-pure monomorphisms clearly is $L_{\mathcal H}$-pure,
$g^{*}\varphi_{N}:N\rightarrow F^{*}$ is  an \textit{$L_{\mathcal H}$-}pure
monomorphism.\end{proof}

Let $S$ be a class of left $($or right$)$ $R$-modules. We use $S^*$ to denote the class $\{M^*\mid M\in S\}$.

\begin{thm}\label{thm:28(4.27),black,p.22}(as \cite[Theorem 1]{War69(b)})
Let S be a class of finitely presented left R-modules  and let ${ \mathcal H }$  be a set of matrices over R   such that $L_{ \mathcal H }$-purity=S-purity, then  \textup{Prod}$((D_{\mathcal H}$ $\cup\{R_{R}\})^{*})$ is the class of  $S$-pure-injective left R-modules.

 \end{thm}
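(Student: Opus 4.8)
The plan is to prove the two inclusions separately, the technical heart being the claim that $F^{*}$ is $S$-pure-injective whenever $F$ is a direct sum of copies of modules in $D_{\mathcal{H}}\cup\{R_{R}\}$ (the fact already invoked forward in Corollary~\ref{cor:1.19(2.6),2report,p.11}). First I would establish this claim. Fix an $S$-pure exact sequence $\Sigma:0\to A\overset{f}{\to}B\to C\to 0$ of left $R$-modules and a homomorphism $\varphi:A\to F^{*}$; the task is to extend $\varphi$ along $f$. The natural device is the tensor--hom adjunction $\mathrm{Hom}_{R}(X,F^{*})=\mathrm{Hom}_{R}(X,\mathrm{Hom}_{K}(F,E))\cong\mathrm{Hom}_{K}(F\otimes_{R}X,E)=(F\otimes_{R}X)^{*}$, natural in the left $R$-module $X$, which converts the extension problem for $\varphi$ into the problem of extending a $K$-linear map $F\otimes_{R}A\to E$ along $F\otimes_{R}f$.

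The key point is that $F\otimes_{R}\Sigma$ is exact. Since $S$-purity equals $L_{\mathcal{H}}$-purity, the equivalence $(1)\Leftrightarrow(3)$ of Theorem~\ref{thm:(1.3),2report,p3} gives exactness of $M\otimes_{R}\Sigma$ for every $M\in D_{\mathcal{H}}$, while $R_{R}\otimes_{R}\Sigma\cong\Sigma$ is exact; as tensoring commutes with direct sums and a direct sum of exact sequences is exact, $F\otimes_{R}\Sigma$ is exact, and in particular $F\otimes_{R}f$ is a monomorphism of $K$-modules. Because $E$ is an injective $K$-module, every $K$-map $F\otimes_{R}A\to E$ extends along this monomorphism to $F\otimes_{R}B$; transporting back through the adjunction yields the desired extension of $\varphi$. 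This proves $F^{*}$ is $S$-pure-injective. The main obstacle is concentrated here: once the adjunction reduces the problem to $K$-modules, everything hinges on recognising $F\otimes_{R}\Sigma$ as exact, which is exactly what Theorem~\ref{thm:(1.3),2report,p3}(3) delivers.

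For the inclusion $\textup{Prod}((D_{\mathcal{H}}\cup\{R_{R}\})^{*})\subseteq\{S\text{-pure-injectives}\}$, I would observe that any direct product $\prod_{i}D_{i}^{*}$ of modules $D_{i}\in D_{\mathcal{H}}\cup\{R_{R}\}$ equals $(\bigoplus_{i}D_{i})^{*}=F^{*}$, again because $\mathrm{Hom}_{K}(-,E)$ carries coproducts to products. Hence every such product is $S$-pure-injective by the claim, and since the class of $S$-pure-injective modules is closed under direct summands (noted in the introduction), every module in $\textup{Prod}((D_{\mathcal{H}}\cup\{R_{R}\})^{*})$ is $S$-pure-injective.

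Conversely, let $N$ be an $S$-pure-injective left $R$-module. Corollary~\ref{cor:1.19(2.6),2report,p.11} supplies an $S$-pure monomorphism $\alpha:N\to F^{*}$ with $F$ a direct sum of copies of modules in $D_{\mathcal{H}}\cup\{R_{R}\}$, giving the $S$-pure exact sequence $0\to N\overset{\alpha}{\to}F^{*}\to F^{*}/\alpha(N)\to 0$. Since $N$ is $S$-pure-injective it is injective relative to this sequence, so $\mathrm{id}_{N}$ extends along $\alpha$ to a retraction $F^{*}\to N$; thus $\alpha$ splits and $N$ is a direct summand of $F^{*}$. As above, $F^{*}$ is a direct product of modules in $(D_{\mathcal{H}}\cup\{R_{R}\})^{*}$, whence $N\in\textup{Prod}((D_{\mathcal{H}}\cup\{R_{R}\})^{*})$, completing the proof.
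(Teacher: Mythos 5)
Your proposal is correct and takes essentially the same approach as the paper: both inclusions rest on the adjunction $\textup{Hom}_{R}(X,\textup{Hom}_{K}(D,E))\cong\textup{Hom}_{K}(D\otimes_{R}X,E)$ combined with Theorem~\ref{thm:(1.3),2report,p3}(3) and the injectivity of $_{K}E$ for one direction, and on Corollary~\ref{cor:1.19(2.6),2report,p.11}, the splitting of an $S$-pure monomorphism out of an $S$-pure-injective module, and $(\bigoplus_{i}F_{i})^{*}\cong\prod_{i}F_{i}^{*}$ for the other. The only organisational difference is that you run the adjunction argument on the whole direct sum $F$ at once (absorbing $R_{R}$ via $R_{R}\otimes_{R}\Sigma\cong\Sigma$), whereas the paper proves each $D_{H}^{*}$ is pure-injective separately, cites the injectivity of $R_{R}^{*}$ from the literature, and then invokes closure of the $S$-pure-injectives under direct products and direct summands.
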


\begin{proof}
Let $M$ be any $S$-pure-injective
left $R$-module. By  Corollary~\ref{cor:1.19(2.6),2report,p.11}, there
exists an $S$-pure, hence split,  monomorphism
$\alpha:M\rightarrow F^{*}$ where $F=\underset{i\in I}{\bigoplus}F_{i}$
 with $F_{i}\in D_{\mathcal H}\cup\{R_{R}\}.$  Since
$F^{*}=(\underset{i\in I}{\bigoplus}F_{i})^{*}\simeq \underset{i\in I}{\prod}F_{i}^{*}$
it follows that  $M\in  \textup{Prod}((D_{\mathcal H}\cup\{R_{R}\})^{*}).$

Conversely,   let $H\in\mathcal H$   and let $\Sigma:$ $0\rightarrow A\rightarrow B\rightarrow C\rightarrow0$
be any $L_H$-pure exact sequence of left $R$-modules. By Theorem~\ref{thm:(1.3),2report,p3}, the sequence
$D_{H}\otimes_{R}\Sigma:$$0\rightarrow D_{H}\otimes_{R}A\rightarrow D_{H}\otimes_{R}B\rightarrow D_{H}\otimes_{R}C\rightarrow0$
is  exact.  Since $E$ is an injective $K$-module,
the sequence $0\rightarrow {\rm Hom}_{K}(D_{H}\otimes_{R}C,E)\rightarrow {\rm Hom}_{K}(D_{H}\otimes_{R}B,E)\rightarrow {\rm Hom}_{K}(D_{H}\otimes_{R}A,E)\rightarrow0$
is exact.  This is isomorphic to the sequence $0\rightarrow {\rm Hom}_{R}(C,{\rm Hom}_{K}(D_{H},E))\rightarrow {\rm Hom}_{R}(B,{\rm Hom}_{K}(D_{H},E))\rightarrow {\rm Hom}_{R}(A,{\rm Hom}_{K}(D_{H},E))\rightarrow0$. That is, the sequence $0\rightarrow {\rm Hom}_{R}(C,D_{H}^{*})\rightarrow {\rm Hom}_{R}(B,D_{H}^{*})\rightarrow {\rm Hom}_{R}(A,D_{H}^{*})\rightarrow0$
is exact. Therefore, $D^{*}_{H}$ is    $L_H$-pure-injective.  By, for instance,   \cite[Theorem 3.2.9, p.77]{EnJe00}, $R_R^*$ is injective  and  thus  each module in $(D_{\mathcal H}\cup\{R_{R}\})^{*}$
is $S$-pure-injective.  It follows that every module in \textup{Prod}$((D_{\mathcal H}$ $\cup\{R_{R}\})^{*})$ is $S$-pure-injective.
\end{proof}

\begin{prop}\label{prop:26(2.13),2report,p.13}   Let S be a class of finitely presented left R-modules, let ${ \mathcal H }$  be a set of matrices over R   such that $L_{ \mathcal H }$-purity=S-purity and let $\Sigma:$ $0\rightarrow A\rightarrow B\rightarrow C\rightarrow0$
be any exact sequence of left R-modules. Then the following statements
are equivalent:

$\left(1\right)$ $\Sigma$ is $S$-pure.

$\left(2\right)$ Every $S$-pure-injective left R-module is injective
relative to $\Sigma.$

$\left(3\right)$ $D^{*}_{H}$ is injective relative to $\Sigma,$
for all $H\in{ \mathcal H }.$

$\left(4\right)$ Every $S$-pure-projective left R-module is projective
relative to $\Sigma.$ \end{prop}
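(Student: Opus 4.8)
The plan is to prove $(1)\Leftrightarrow(2)\Leftrightarrow(3)$ and $(1)\Leftrightarrow(4)$, using the earlier results as black boxes. The key observation is that the four statements naturally split into two dual halves: statements $(1),(2),(3)$ concern injectivity relative to $\Sigma$, while $(1),(4)$ concern projectivity relative to $\Sigma$. For the injective half, I would establish the cycle $(2)\Rightarrow(3)\Rightarrow(1)\Rightarrow(2)$. The implication $(2)\Rightarrow(3)$ is immediate, since by Theorem~\ref{thm:28(4.27),black,p.22} each $D_H^*$ (with $H\in\mathcal H$) lies in $\textup{Prod}((D_{\mathcal H}\cup\{R_R\})^*)$ and is therefore $S$-pure-injective, so if every $S$-pure-injective module is injective relative to $\Sigma$ then in particular each $D_H^*$ is.

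For $(3)\Rightarrow(1)$, I would apply the $\textup{Hom}$-functor characterisation of $S$-purity directly. Saying that $D_H^*$ is injective relative to $\Sigma$ means precisely that the sequence
\[
0\rightarrow \textup{Hom}_R(C,D_H^*)\rightarrow \textup{Hom}_R(B,D_H^*)\rightarrow \textup{Hom}_R(A,D_H^*)\rightarrow 0
\]
is exact. Running the adjunction isomorphism $\textup{Hom}_R(-,\textup{Hom}_K(D_H,E))\cong \textup{Hom}_K(D_H\otimes_R -,E)$ backwards (exactly as in the proof of Theorem~\ref{thm:28(4.27),black,p.22}, read in reverse), and using that $E$ is an injective cogenerator for $K$-modules, this forces the sequence $0\rightarrow D_H\otimes_R A\rightarrow D_H\otimes_R B\rightarrow D_H\otimes_R C\rightarrow 0$ to be exact for every $H\in\mathcal H$. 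By condition $(3)$ of Theorem~\ref{thm:(1.3),2report,p3} (the tensor characterisation, applied to all $M\in D_{\mathcal H}$), this is equivalent to $\Sigma$ being $S$-pure, giving $(1)$. Finally $(1)\Rightarrow(2)$ is the statement that every $S$-pure-injective module is, by definition, injective relative to every $S$-pure exact sequence, hence relative to $\Sigma$.

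For the projective half, $(1)\Rightarrow(4)$ is again immediate from the definition of $S$-pure-projectivity. The reverse direction $(4)\Rightarrow(1)$ I would obtain by testing against a sufficiently rich family of $S$-pure-projectives: by Proposition~\ref{cor:1.18(2.5),2report,p.11}(2) the $S$-pure-projectives are exactly $\textup{Add}(S\cup\{_RR\})$, and since $L_{\mathcal H}$-purity equals $S$-purity we may realise $S$-purity through the modules $L_H$. Lifting the identity map along $\Sigma$ for each generating module $L_H$ yields, via the solvability-of-equations criterion, condition $(4)$ of Theorem~\ref{thm:(1.3),2report,p3}, which characterises $S$-purity. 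The main obstacle, and the step deserving the most care, is the reverse adjunction argument in $(3)\Rightarrow(1)$: one must check that injectivity of $D_H^*$ relative to $\Sigma$ for every $H$ genuinely recovers exactness of all the tensored sequences, which relies essentially on $E$ being a \emph{cogenerator} (so that a map of $K$-modules is zero iff it dies after applying $\textup{Hom}_K(-,E)$) and not merely injective; the rest is bookkeeping with the natural isomorphisms already set up in the proof of Theorem~\ref{thm:28(4.27),black,p.22}.
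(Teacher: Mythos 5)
Your proposal is correct and follows essentially the same route as the paper: $(2)\Rightarrow(3)$ via Theorem~\ref{thm:28(4.27),black,p.22}, $(3)\Rightarrow(1)$ by running the adjunction $\textup{Hom}_R(-,\textup{Hom}_K(D_H,E))\cong\textup{Hom}_K(D_H\otimes_R -,E)$ backwards and using that $E$ is an injective cogenerator, with the remaining implications from $(1)$ being immediate from the definitions. The only cosmetic difference is in $(4)\Rightarrow(1)$, where the paper tests $\Sigma$ directly against the modules of $S$ (which are $S$-pure-projective by Proposition~\ref{cor:1.18(2.5),2report,p.11}) and invokes the definition of $S$-purity, whereas you route through the modules $L_H$ and the matrix-equation criterion of Theorem~\ref{thm:(1.3),2report,p3}; both amount to the same test against a generating family of $S$-pure-projectives.
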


\begin{proof}
$\left(1\right)\Rightarrow\left(2\right)$ and $\left(1\right)\Rightarrow\left(4\right)$
are obvious  and $\left(2\right)\Rightarrow\left(3\right)$  is immediate  from  Theorem~\ref{thm:28(4.27),black,p.22}.

$\left(3\right)\Rightarrow\left(1\right)$ Let $H\in\mathcal H.$ By hypothesis,
the sequence\[
0\rightarrow \textup{Hom}_{R}(C,\textup{Hom}_{K}(D_{H},E))\rightarrow \textup{Hom}_{R}(B,\textup{Hom}_{K}(D_{H},E))\rightarrow \textup{Hom}_{R}(A,\textup{Hom}_{K}(D_{H},E))\rightarrow0,\] equivalently,   the sequence\[
0\rightarrow \textup{Hom}_{K}(D_{H}\otimes_{R}C,E)\rightarrow\textup{Hom}_{K}(D_{H}\otimes_{R}B,E)\rightarrow \textup{Hom}_{K}(D_{H}\otimes_{R}A,E)\rightarrow0\]
 is exact. Since $E$ is an injective cogenerator for $K$-modules, it follows  $($see \cite[Lemma 3.2.8, p.77]{EnJe00}$)$ that
the sequence $0\rightarrow D_{H}\otimes_{R}A\rightarrow D_{H}\otimes_{R}B\rightarrow D_{H}\otimes_{R}C\rightarrow0$
is exact. Thus $\Sigma$ is  $S$-pure.

$\left(4\right)\Rightarrow\left(1\right)$ This is immediate from Proposition~\ref{cor:1.18(2.5),2report,p.11},
and the definition of  $S$-pure exact sequence.\end{proof}

\begin{thm}\label{thm:20(2.7),2report,p.11}
Let $S$ be  a class of finitely presented left $R$-modules. Then  for a left R-module M:

$\left(1\right)$  M  is S-pure-projective if and only if
it  is  projective relative to every $S$-pure
exact sequence $0\rightarrow K\rightarrow E\rightarrow F\rightarrow0$
of left R-modules where  E is $S$-pure-injective;

$\left(2\right)$  M  is S-pure-injective if and only if  M is  injective relative to every $S$-pure exact
sequence $0\rightarrow K\rightarrow P\rightarrow L\rightarrow0$ of
left R-modules  where  P is $S$-pure-projective.\end{thm}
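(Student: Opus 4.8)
The plan is to prove each direction via a standard ``projective/injective relative to a distinguished class is enough'' argument, using the resolutions supplied by Proposition~\ref{cor:1.18(2.5),2report,p.11} and Corollary~\ref{cor:1.19(2.6),2report,p.11}. In both parts, one implication is trivial: if $M$ is $S$-pure-projective (resp. $S$-pure-injective) then by definition it is projective (resp. injective) relative to \emph{every} $S$-pure exact sequence, in particular to those of the restricted form. So the substance is the converse in each case.

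For part $(1)$, suppose $M$ is projective relative to every $S$-pure exact sequence whose middle term is $S$-pure-injective; I want to show $M$ is $S$-pure-projective, i.e. projective relative to an arbitrary $S$-pure exact sequence $0\to A\xrightarrow{f} B\xrightarrow{g} C\to 0$. Given a map $h\colon M\to C$, the idea is to factor the lifting problem through a sequence of the special shape. First embed $B$ into an $S$-pure-injective module: by Corollary~\ref{cor:1.19(2.6),2report,p.11} there is an $S$-pure monomorphism $\iota\colon B\to E$ with $E=F^{*}$ being $S$-pure-injective. Pushing the original sequence out along $\iota$ (or more directly, forming the cokernel of the composite $\iota f\colon A\to E$) produces an $S$-pure exact sequence $0\to A\to E\to E/\mathrm{im}(\iota f)\to 0$ with $S$-pure-injective middle term, together with a map of short exact sequences from $\Sigma$ into it. The main point to check is that this new sequence is genuinely $S$-pure: since $f$ and $\iota$ are both $S$-pure monomorphisms, their composite is, and one verifies purity of the quotient sequence using the criteria of Theorem~\ref{thm:(1.3),2report,p3} (for instance criterion $(2)$, solvability of matrix equations, transfers along the composite). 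Then the hypothesis gives a lift of the induced map $M\to E/\mathrm{im}(\iota f)$ to $E$, and a diagram chase recovers a lift $M\to B$ of the original $h$.

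For part $(2)$, the argument is dual and I would run it through Proposition~\ref{cor:1.18(2.5),2report,p.11}$(1)$: given $M$ injective relative to every $S$-pure exact sequence with $S$-pure-projective middle term, and an arbitrary $S$-pure exact sequence $0\to A\to B\to C\to 0$ together with $h\colon A\to M$, I take an $S$-pure-projective cover $P\twoheadrightarrow B$ (the surjection from a direct sum of modules in $S\cup\{{}_RR\}$ provided by Proposition~\ref{cor:1.18(2.5),2report,p.11}$(1)$, which is $S$-pure) and pull back, obtaining an $S$-pure exact sequence $0\to A'\to P\to C'\to 0$ with $S$-pure-projective middle term mapping onto $\Sigma$. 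Again the key verification is that the pullback sequence is $S$-pure; this follows from the $\mathrm{Hom}$-exactness definition since $S$-purity is preserved under pullback along the $S$-pure epimorphism $P\to B$. The hypothesis then yields an extension of the induced map $A'\to M$ over $P$, and chasing the diagram gives the desired extension of $h$ over $B$.

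The main obstacle in both parts is confirming that the auxiliary sequence one constructs (the pushout quotient in $(1)$, the pullback in $(2)$) is itself $S$-pure, since only then does the restricted hypothesis apply. For part $(1)$ I expect the cleanest route is to invoke that a composite of $S$-pure monomorphisms is $S$-pure and that the resulting quotient sequence inherits purity --- precisely the kind of stability already used in the proof of Corollary~\ref{cor:1.19(2.6),2report,p.11} --- rather than manipulating pushouts abstractly. For part $(2)$ the analogous fact is that $S$-purity is inherited by pullbacks along $S$-pure epimorphisms, which is immediate from the left-exactness of $\mathrm{Hom}_R(M,-)$ together with the defining surjectivity condition. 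Once these stability statements are in hand, both diagram chases are routine.
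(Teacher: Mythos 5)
Your proposal is correct and follows essentially the same route as the paper: for part (1) the paper likewise embeds $B$ by an $S$-pure monomorphism into an $S$-pure-injective module (via Corollary~\ref{cor:1.19(2.6),2report,p.11} and Theorem~\ref{thm:28(4.27),black,p.22}), forms the pushout sequence $0\rightarrow A\rightarrow P\rightarrow D\rightarrow 0$ (your cokernel of $\iota f$), justifies its purity by the fact that a composite of $S$-pure monomorphisms is $S$-pure, and then does the same diagram chase. For part (2) the paper merely states that the proof is dual, and your pullback along the $S$-pure epimorphism supplied by Proposition~\ref{cor:1.18(2.5),2report,p.11} is exactly that dual argument.
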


\begin{proof}
$\left(1\right)$ $(\Rightarrow)$ is obvious.

$\left(\Leftarrow\right)$  Let $0\rightarrow A\overset{\mu}{\rightarrow}B\overset{\nu}{\rightarrow}C\rightarrow0$
be any $S$-pure exact sequence of left $R$-modules. By  Corollary~\ref{cor:1.19(2.6),2report,p.11} and Theorem~\ref{thm:28(4.27),black,p.22}, there is an $S$-pure exact sequence $0\rightarrow B\overset{\lambda}{\rightarrow}P\overset{\rho}{\rightarrow}N\rightarrow0$
of left $R$-modules where  $P$ is  $S$-pure-injective. We
have the following pushout diagram:

\[\begin{tikzpicture}
  \matrix (m) [matrix of math nodes, row sep=2em, column sep=2em]
    {  &   & 0 & 0& \\  0& A  & B & C&0     \\ 0& A  & P & D&0        \\  &   & N & N   \\&  & 0 & 0&   \\ };

 { [start chain] \;

    \chainin (m-2-2);
    { [start branch=B] \chainin (m-3-2)
        [join={node[right] {$\scriptstyle I_A$}}];}

  }
  { [start chain] \;
    \chainin (m-1-4);
    { [start branch=B] \chainin (m-2-4)
        [join={node[right] {$$}}];}
    ;
    \chainin (m-2-4);
    { [start branch=B] \chainin (m-3-4)
        [join={node[right] {$\scriptstyle \varphi$}}];}
 ;
    \chainin (m-3-4);
    { [start branch=B] \chainin (m-4-4)
        [join={node[right] {$\scriptstyle \delta$}}];}
 ;
    \chainin (m-4-4);
    { [start branch=B] \chainin (m-5-4)
        [join={node[right] {$$}}];}
  }

 { [start chain] \;
    \chainin (m-1-3);
    { [start branch=B] \chainin (m-2-3)
        [join={node[right] {$$}}];}
    ;
    \chainin (m-2-3);
    { [start branch=B] \chainin (m-3-3)
        [join={node[right] {$\scriptstyle \lambda$}}];}
 ;
    \chainin (m-3-3);
    { [start branch=B] \chainin (m-4-3)
        [join={node[right] {$\scriptstyle \rho$}}];}
 ;
    \chainin (m-4-3);
    { [start branch=B] \chainin (m-5-3)
        [join={node[right] {$$}}];}
  }

  { [start chain] \chainin (m-2-1);
    \chainin (m-2-2) [join={node[above] {}}];
    \chainin (m-2-3) [join={node[above] {$\scriptstyle\mu$}}];
    \chainin (m-2-4) [join={node[above] {$\scriptstyle\nu$ }}];
 \chainin (m-2-5) [join={node[above] {}}];
     }
{ [start chain] \chainin (m-3-1);
    \chainin (m-3-2) [join={node[above] {}}];
    \chainin (m-3-3) [join={node[above] {$\scriptstyle\alpha$}}];
    \chainin (m-3-4) [join={node[above] {$\scriptstyle\beta$ }}];
 \chainin (m-3-5) [join={node[above] {}}];
     }
{ [start chain];
    \chainin (m-4-3) [join={node[above] {$\alpha$}}];
    \chainin (m-4-4) [join={node[above] {$\scriptstyle I_N $ }}];
];
     }
\end{tikzpicture}\]

Since  $\mu$ and $\lambda$ are $S$-pure
$R$-monomorphisms so is   $\lambda\mu$. Since $\alpha$=$\lambda\mu,$
the exact sequence  $0\rightarrow A\overset{\alpha}{\rightarrow}P\overset{\beta}{\rightarrow}D\rightarrow0$
is  $S$-pure.  Let $\psi\in {\rm Hom}_{R}(M,C).$
 By hypothesis, there is $\gamma\in {\rm Hom}_{R}(M,P)$
such that $\beta\gamma$=$\varphi\psi.$ We have  $\rho\gamma$=$\delta\beta\gamma$=$\delta\varphi\psi =0$ so im$(\gamma)\subseteq$ ker$(\rho)$=im$(\lambda)$ and hence
$\gamma$=$\lambda\gamma^{^{\prime}}$ for some $\gamma^{^{\prime}}\in {\rm Hom}_{R}(M,B).$
Then we have $\varphi\nu\gamma^{^{\prime}}$=$\beta\lambda\gamma^{^{\prime}}$=$\beta\gamma$=$\varphi\psi.$
 Since $\varphi$ is a monomorphism, $\nu\gamma^{^{\prime}}$=$\psi.$ Hence $M$ is  $S$-pure-projective.

$\left(2\right)$ The proof is  dual to that of  $\left(1\right).$
\end{proof}

\begin{cor}\label{cor:22(3.25),blue,p.10}
Let $S$ be  a class of finitely presented left $R$-modules. Then the following statements are equivalent:

$\left(1\right)$ For every $S$-pure exact sequence $0\rightarrow N\rightarrow M\rightarrow K\rightarrow0$
of left R-modules, if M is  $S$-pure-projective, then N is  $S$-pure-projective.

$\left(2\right)$ For every $S$-pure exact sequence $0\rightarrow N\rightarrow M\rightarrow K\rightarrow0$
of left R-modules, if M is  $S$-pure-injective, then K is  $S$-pure-injective.\end{cor}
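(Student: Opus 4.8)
The plan is to prove the two implications by a single self-dual four-step pattern, the crucial device being Theorem~\ref{thm:20(2.7),2report,p.11}, which cuts the class of test sequences down to exactly the shape that brings the hypothesis into play. Throughout I use the definitions from the introduction: an $S$-pure-projective module is projective relative to \emph{every} $S$-pure exact sequence, and an $S$-pure-injective module is injective relative to every such sequence.

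For $(1)\Rightarrow(2)$, assume (1) and let $0\rightarrow N\rightarrow M\overset{q}{\rightarrow}K\rightarrow0$ be $S$-pure with $M$ being $S$-pure-injective; I want $K$ to be $S$-pure-injective. By Theorem~\ref{thm:20(2.7),2report,p.11}(2) it suffices to extend, across every $S$-pure exact sequence $0\rightarrow A\overset{u}{\rightarrow}P\overset{v}{\rightarrow}L\rightarrow0$ with $P$ being $S$-pure-projective, an arbitrary map $f\colon A\rightarrow K$ to a map $P\rightarrow K$. The decisive observation is that in such a test sequence $A$ is an $S$-pure submodule of the $S$-pure-projective module $P$, so hypothesis (1), applied to that very sequence with middle term $P$, forces $A$ to be $S$-pure-projective. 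Being $S$-pure-projective, $A$ is projective relative to the $S$-pure epimorphism $q$, so $f$ lifts to some $h\colon A\rightarrow M$ with $qh=f$; being $S$-pure-injective, $M$ is injective relative to the $S$-pure monomorphism $u$, so $h$ extends to some $H\colon P\rightarrow M$ with $Hu=h$. Then $qH\colon P\rightarrow K$ satisfies $qHu=qh=f$, which is the required extension, and Theorem~\ref{thm:20(2.7),2report,p.11}(2) gives that $K$ is $S$-pure-injective.

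For $(2)\Rightarrow(1)$ I dualize. Assume (2) and let $0\rightarrow N\overset{\iota}{\rightarrow}M\rightarrow K\rightarrow0$ be $S$-pure with $M$ being $S$-pure-projective; I want $N$ to be $S$-pure-projective. By Theorem~\ref{thm:20(2.7),2report,p.11}(1) it suffices, across every $S$-pure exact sequence $0\rightarrow A\rightarrow E\overset{w}{\rightarrow}L\rightarrow0$ with $E$ being $S$-pure-injective, to lift an arbitrary $\psi\colon N\rightarrow L$ through $w$. Here $L$ is an $S$-pure quotient of the $S$-pure-injective module $E$, so hypothesis (2), applied to that sequence with middle term $E$, makes $L$ an $S$-pure-injective module; hence $\psi$ extends along the $S$-pure monomorphism $\iota$ to some $\Psi\colon M\rightarrow L$, and since $M$ is $S$-pure-projective this $\Psi$ lifts through the $S$-pure epimorphism $w$ to some $\Phi\colon M\rightarrow E$. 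The composite $\Phi\iota\colon N\rightarrow E$ then satisfies $w\Phi\iota=\Psi\iota=\psi$, so $N$ is projective relative to the test sequence, and Theorem~\ref{thm:20(2.7),2report,p.11}(1) yields that $N$ is $S$-pure-projective.

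The whole argument is short, and its only genuine content is the opening move of each direction: invoking Theorem~\ref{thm:20(2.7),2report,p.11} to restrict attention to test sequences whose middle term is $S$-pure-projective (resp. $S$-pure-injective), so that the \emph{sub}object (resp. \emph{quotient}) appearing in the test sequence is precisely the object to which the hypothesis applies. This is where I expect the only possible slip: without that restriction the extension would stall, since a general $S$-pure submodule of an arbitrary module, or an $S$-pure quotient of one, carries no projectivity or injectivity. Once the reduction is in place, the two remaining steps in each direction are nothing more than the definitions of relative projectivity and relative injectivity applied to the $S$-pure epimorphism and the $S$-pure monomorphism coming from the given sequence $0\rightarrow N\rightarrow M\rightarrow K\rightarrow0$.
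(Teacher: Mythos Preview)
Your proof is correct and follows essentially the same approach as the paper's own proof: in each direction you invoke Theorem~\ref{thm:20(2.7),2report,p.11} to reduce to test sequences with $S$-pure-projective (resp.\ $S$-pure-injective) middle term, apply the hypothesis to the resulting subobject (resp.\ quotient), and then chain a lift and an extension using the relative projectivity/injectivity of the two modules at hand. The paper's argument is step-for-step the same, differing only in notation.
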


\begin{proof}
$\left(1\right)\Rightarrow\left(2\right)$ Let \textit{$0\rightarrow N\overset{\nu}{\rightarrow}M\overset{\mu}{\rightarrow}K\rightarrow0$
}be any $S$-pure exact sequence of left $R$-modules where $M$
is  $S$-pure-injective. Let \textit{$0\rightarrow A\overset{\alpha}{\rightarrow}B\overset{\beta}{\rightarrow}C\rightarrow0$
}be any\textit{ }$S$-pure exact sequence of left $R$-modules where
$B$ is  $S$-pure-projective. By hypothesis, $A$ is  $S$-pure-projective.
Let $f:A\rightarrow K$ be any $R$-homomorphism. Thus there is an
$R$-homomorphism $g:A\rightarrow M$ such that $\mu g=f.$ Since
$M$ is  $S$-pure-injective, there is an $R$-homomorphism $h:B\rightarrow M$
such that $h\alpha=g.$ Put $\lambda=\mu h,$ thus $\lambda\alpha=(\mu h)\alpha=\mu(h\alpha)=\mu g=f.$
Hence $K$ is injective relative to every $S$-pure exact sequence
\textit{$0\rightarrow A\rightarrow B\rightarrow C\rightarrow0$ } where $B$ is  $S$-pure-projective. By Theorem~\ref{thm:20(2.7),2report,p.11}, $K$\textit{ }is  $S$-pure-injective.

$\left(2\right)\Rightarrow\left(1\right)$ Let \textit{$0\rightarrow N\overset{\nu}{\rightarrow}M\overset{\mu}{\rightarrow}K\rightarrow0$
}be any $S$-pure exact sequence of left $R$-modules where $M$
is  $S$-pure-projective. Let \textit{$0\rightarrow A\overset{\alpha}{\rightarrow}B\overset{\beta}{\rightarrow}C\rightarrow0$
}be any\textit{ }$S$-pure exact sequence of left $R$-modules where
$B$ is  $S$-pure-injective. By hypothesis, $C$ is  $S$-pure-injective.
Let $f:N\rightarrow C$ be any $R$-homomorphism. Thus there is an
$R$-homomorphism $g:M\rightarrow C$ such that $g\nu=f.$ Since $M$
is  $S$-pure-projective, there is an $R$-homomorphism $h:M\rightarrow B$
such that $\beta h=g.$ Put $\lambda=h\nu,$ thus $\beta\lambda=\beta h\nu=g\nu=f.$
Hence $N$ is projective relative to every $S$-pure exact sequence
\textit{$0\rightarrow A\rightarrow B\rightarrow C\rightarrow0$ }of
left $R$-modules where $B$ is  $S$-pure-injective. By  Theorem~\ref{thm:20(2.7),2report,p.11}, $N$\textit{ }is  $S$-pure-projective.
\end{proof}

\section{Comparing purities}

\begin{thm}\label{thm:2.1(2.15),(2.17),cor(2.18),cor(1.6),2report,p.14-16,p.5} Let S and T be classes of finitely presented left R-modules and let  $\mathcal{G}$ and $\mathcal{H}$ be sets of
matrices over  R such that $L_{ \mathcal G}$-purity=S-purity and $L_{ \mathcal H }$-purity=T-purity. Then the following statements are equivalent.

$\left(1\right)$ Every T-pure short exact sequence of
left R-modules is S-pure.

$\left(2\right)$ Every T-pure exact sequence $0\rightarrow A\rightarrow B\rightarrow C\rightarrow0$
of left R-modules where  B is  T-pure-injective is S-pure.

$\left(3\right)$ Every S-pure-projective left R-module is
T-pure-projective.

$\left(4\right)$ $S\subseteq \textup{add}(T\cup\{_{R}R\}).$

$\left(5\right)$ Every T-pure exact sequence $0\rightarrow A\rightarrow B\rightarrow C\rightarrow0$
of left R-modules where B is  T-pure-projective is S-pure.

$\left(6\right)$ Every S-pure-injective left R-module is
T-pure-injective.

$\left(7\right)$ $D^*_{\mathcal G}\subseteq $Prod$(({D_{\mathcal H}}\cup{ R_{R}})^*).$

$\left(8\right)$ The corresponding assertions for  right modules.\end{thm}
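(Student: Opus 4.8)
The plan is to prove all of $(1)$--$(7)$ equivalent for left modules first, and then to deduce $(8)$ by transporting these across the duality $(-)^{*}$ via Theorem~\ref{thm:(1.3),2report,p3}. The statements split into a ``projective'' group $(1),(2),(3),(4)$ and an ``injective'' group $(1),(5),(6),(7)$ sharing the pivot $(1)$, and I would handle each group as a short cycle. Throughout I would read off the class of $S$-pure-projectives as $\textup{Add}(S\cup\{_{R}R\})$ from Proposition~\ref{cor:1.18(2.5),2report,p.11} and the class of $S$-pure-injectives as $\textup{Prod}((D_{\mathcal G}\cup\{R_{R}\})^{*})$ from Theorem~\ref{thm:28(4.27),black,p.22}, and likewise for $T$ with $\mathcal H$.

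For the projective group I would run $(1)\Rightarrow(2)\Rightarrow(3)\Rightarrow(4)\Rightarrow(1)$. Here $(1)\Rightarrow(2)$ is a specialisation. For $(2)\Rightarrow(3)$ I would take an $S$-pure-projective $M$ and verify $T$-pure-projectivity through Theorem~\ref{thm:20(2.7),2report,p.11}$(1)$: any $T$-pure sequence with $T$-pure-injective middle term is $S$-pure by $(2)$, and the $S$-pure-projective $M$ is projective relative to it. For $(3)\Rightarrow(4)$, statement $(3)$ gives $\textup{Add}(S\cup\{_{R}R\})\subseteq\textup{Add}(T\cup\{_{R}R\})$, so each $M\in S$ is a direct summand of a direct sum of modules from $T\cup\{_{R}R\}$; since $M$ is finitely presented it is already a summand of a finite subsum, whence $M\in\textup{add}(T\cup\{_{R}R\})$. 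Finally $(4)\Rightarrow(1)$: every module in $\textup{add}(T\cup\{_{R}R\})$ is $T$-pure-projective, so each $M\in S$ is projective relative to any $T$-pure sequence, forcing such a sequence to be $S$-pure.

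For the injective group I would run $(1)\Rightarrow(5)\Rightarrow(6)\Rightarrow(1)$ together with $(6)\Leftrightarrow(7)$. Again $(1)\Rightarrow(5)$ is specialisation, and $(5)\Rightarrow(6)$ is the exact dual of $(2)\Rightarrow(3)$, using Theorem~\ref{thm:20(2.7),2report,p.11}$(2)$ to test $T$-pure-injectivity of an $S$-pure-injective module against $T$-pure sequences with $T$-pure-projective middle term, which are $S$-pure by $(5)$. For $(6)\Rightarrow(1)$ I would pass a given $T$-pure sequence $\Sigma$ through Proposition~\ref{prop:26(2.13),2report,p.13}: by $(6)$ every $S$-pure-injective is $T$-pure-injective and hence injective relative to $\Sigma$, and the criterion $(2)\Rightarrow(1)$ of that proposition (for $S$) makes $\Sigma$ $S$-pure. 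The equivalence $(6)\Leftrightarrow(7)$ is then read off Theorem~\ref{thm:28(4.27),black,p.22}: $(6)$ is the inclusion of $S$-pure-injectives in $T$-pure-injectives, and since each $D_{H}^{*}$ with $H\in\mathcal G$ is $S$-pure-injective while $R_{R}^{*}$ is injective (hence $T$-pure-injective), this inclusion is equivalent, using closure of the $T$-pure-injectives under $\textup{Prod}$, to $D^{*}_{\mathcal G}\subseteq\textup{Prod}((D_{\mathcal H}\cup R_{R})^{*})$.

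For $(8)$ I would apply the equivalences just proved to right modules, where they compare $D_{\mathcal H}$-purity with $D_{\mathcal G}$-purity; it then suffices to bridge one pair, say the right-hand analogue $(1')$ of $(1)$ with $(1)$. Applying the equivalence $(1)\Leftrightarrow(6)$ of Theorem~\ref{thm:(1.3),2report,p3} once with $\mathcal G$ and once with $\mathcal H$ shows that a left sequence $\Sigma$ is $L_{\mathcal G}$-pure (resp.\ $L_{\mathcal H}$-pure) iff its dual $\Sigma^{*}$ is $D_{\mathcal G}$-pure (resp.\ $D_{\mathcal H}$-pure), and the right-hand version of that theorem gives the symmetric statement for right sequences; chaining these turns ``$T$-pure $\Rightarrow$ $S$-pure'' for left sequences into ``$D_{\mathcal H}$-pure $\Rightarrow$ $D_{\mathcal G}$-pure'' for right sequences and back, yielding $(1)\Leftrightarrow(1')$. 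The two delicate points are the reduction in $(3)\Rightarrow(4)$ from $\textup{Add}$ to $\textup{add}$, which rests on the standard fact that a finitely presented summand of an arbitrary direct sum is already a summand of a finite subsum, and---the main obstacle---the duality step for $(8)$: because $(-)^{*}$ is not an equivalence, each direction must be routed through the correct left- or right-hand form of Theorem~\ref{thm:(1.3),2report,p3} rather than naively dualised, with careful bookkeeping of whether $\mathcal G$ or $\mathcal H$ is in play at each application.
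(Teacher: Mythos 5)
Your proposal is correct and follows essentially the same route as the paper: the same projective cycle $(1)\Rightarrow(2)\Rightarrow(3)\Rightarrow(4)\Rightarrow(1)$ via Theorem~\ref{thm:20(2.7),2report,p.11} and Proposition~\ref{cor:1.18(2.5),2report,p.11}, the same injective side via Theorem~\ref{thm:28(4.27),black,p.22} and Proposition~\ref{prop:26(2.13),2report,p.13} (your routing $(6)\Rightarrow(1)$ plus $(6)\Leftrightarrow(7)$ versus the paper's $(6)\Rightarrow(7)\Rightarrow(1)$ is only a cosmetic reordering), and the same duality argument through Theorem~\ref{thm:(1.3),2report,p3} for $(8)$. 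Your explicit reduction from \textup{Add} to \textup{add} in $(3)\Rightarrow(4)$ just spells out a detail the paper leaves inside its citation of Proposition~\ref{cor:1.18(2.5),2report,p.11}.
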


\begin{proof}
$\left(1\right)\Rightarrow\left(2\right)$ and $\left(1\right)\Rightarrow\left(5\right)$    are obvious.

$\left(2\right)\Rightarrow\left(3\right)$ Let $M$ be any $ S$-pure-projective
left $R$-module and let $\Sigma:$ $0\rightarrow A\rightarrow B\rightarrow C\rightarrow0$
be any $T$-pure exact sequence of left $R$-modules where
$B$ is  $T$-pure-injective. By hypothesis, $\Sigma$
is $S$-pure and hence the sequence $0\rightarrow  \textup{Hom}_{R}\left(M,A\right)\rightarrow  \textup{Hom}_{R}\left(M,B\right)\rightarrow  \textup{Hom}_{R}\left(M,C\right)\rightarrow0$
is exact. Thus $M$ is  projective relative to every $T$-pure
exact sequence $\Sigma:$ $0\rightarrow A\rightarrow B\rightarrow C\rightarrow0$
of left $R$-modules where  $B$ is  $T$-pure-injective.
By Theorem~\ref{thm:20(2.7),2report,p.11}, $M$ is $T$-pure-projective.

$\left(3\right)\Rightarrow\left(4\right)$ This follows by    Proposition~\ref{cor:1.18(2.5),2report,p.11}.

$\left(4\right)\Rightarrow\left(1\right)$ Let $\Sigma:$ $0\rightarrow A\rightarrow B\rightarrow C\rightarrow0$
be any $T$-pure exact sequence of left $R$-modules and let
$M\in S$. By assumption and Proposition~\ref{cor:1.18(2.5),2report,p.11} $M$
is $T$-pure-projective. Thus the sequence
$0\rightarrow \textup{Hom}_{R}\left(M,A\right)\rightarrow \textup{Hom}_{R}\left(M,B\right)\rightarrow  \textup{Hom}_{R}\left(M,C\right)\rightarrow0$
is exact.   Therefore  $\Sigma$ is $S$-pure.

$\left(5\right)\Rightarrow\left(6\right)$ Let $M$ be any $S$-pure-injective
left $R$-module and let $\Sigma:$ $0\rightarrow A\rightarrow B\rightarrow C\rightarrow0$
be any $T$-pure exact sequence of left $R$-modules where
$B$ is  $T$-pure-projective. By hypothesis, $\Sigma$
is $S$-pure and hence the sequence $0\rightarrow \textup{Hom}_{R}\left(C,M\right)\rightarrow  \textup{Hom}_{R}\left(B,M\right)\rightarrow  \textup{Hom}_{R}\left(A,M\right)\rightarrow0$
is exact. It follows by  Theorem~\ref {thm:20(2.7),2report,p.11} that $M$ is $T$-pure-injective.

$\left(6\right)\Rightarrow\left(7\right)$ Let
 $M\in D^*_{\mathcal G},$ thus $M$ is an $S$-pure-injective
left $R$-module $($by Theorem~\ref{thm:28(4.27),black,p.22}). By hypothesis, $M$ is $T$-pure-injective so
by Theorem~\ref {thm:28(4.27),black,p.22} we have that  $M\in$Prod$(({D_{\mathcal H}}\cup{ R_{R}})^*)$.

$\left(7\right)\Rightarrow\left(1\right)$  Let $\Sigma:$ $0\rightarrow A\rightarrow B\rightarrow C\rightarrow0$
be any $T$-pure exact sequence of left $R$-modules. Let
$G\in\mathcal G,$ thus by hypothesis, $D^*_{G} \in$ Prod$(({D_{\mathcal H}}\cup{ R_{R}})^*),$   hence  $D^*_{G}$ is $T$-pure-injective, in particular
  $D^*_{G}$ is injective
relative to $\Sigma$.  By Proposition~\ref {prop:26(2.13),2report,p.13}, $\Sigma$ is $S$-pure.

$\left(1\right)\Rightarrow\left(8\right)$ Let $\Sigma:$$0\rightarrow A\rightarrow B\rightarrow C\rightarrow0$
be any  $D_\mathcal{H}$-pure exact sequence of right $R$-modules. By the right
hand version of  Theorem~\ref {thm:(1.3),2report,p3}, the exact sequence of
left $R$-modules $\Sigma^{*}:$ $0\rightarrow C^{*}\rightarrow B^{*}\rightarrow A^{*}\rightarrow0$
is $T$-pure.  By hypothesis, $\Sigma^{*}$
is  $S$-pure and hence by   Theorem~\ref {thm:(1.3),2report,p3} again,  $\Sigma$ is $D_{\mathcal G}$-pure.

$\left(8\right)\Rightarrow\left(1\right)$ This follows by right/left symmetry.

 \end{proof}

The following corollary is immediately obtained from Theorem~\ref{thm:2.1(2.15),(2.17),cor(2.18),cor(1.6),2report,p.14-16,p.5}.

\begin{cor}\label{cor:2.2(2.15(a)) general case of (cor.(2.19),2report,p16)}
Let S and T be classes of finitely presented left R-modules and let  $\mathcal{G}$ and $\mathcal{H}$ be sets of
matrices over  R such that $L_{ \mathcal G}$-purity=S-purity and $L_{ \mathcal H }$-purity=T-purity. Then the following statements are equivalent:

$\left(1\right)$ T-purity =S-purity for short
exact sequences of left R-modules.

$\left(2\right)$ S-pure-projectivity=T-pure-projectivity
for left R-modules.

$\left(3\right)$ \textup{add}$(S\cup\{_{R}R\})$=
\textup{add}$(T\cup\{_{R}R\}).$

$\left(4\right)$ S-pure-injectivity=T-pure-injectivity
for left R-modules.

$\left(5\right)$ \textup{Prod}$(\{D^*_{G}\mid G\in\mathcal G\cup\{\underset{1\times1}{0}\}\})$=\textup{Prod}$(\{ D^*_{H}\mid H\in\mathcal{H\cup}\{\underset{1\times1}{0}\}\}).$

$\left(6\right)$ The corresponding assertions on the right.\end{cor}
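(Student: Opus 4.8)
The plan is to prove Corollary~\ref{cor:2.2(2.15(a)) general case of (cor.(2.19),2report,p16)} by showing that each numbered statement is an instance of the equivalences already established in Theorem~\ref{thm:2.1(2.15),(2.17),cor(2.18),cor(1.6),2report,p.14-16,p.5}, applied symmetrically in $S$ and $T$. The key observation is that equality of two purities (or two relative projectivity/injectivity classes) is just the conjunction of two containments, each of which is handled by the theorem. So the whole argument reduces to reading off the theorem twice, once with the roles of $S$ and $T$ as given and once with the roles reversed.

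First I would spell out the conjunctions explicitly. Statement $(1)$, that $T$-purity and $S$-purity coincide for short exact sequences, means simultaneously that every $T$-pure sequence is $S$-pure and every $S$-pure sequence is $T$-pure. The first half is Theorem~\ref{thm:2.1(2.15),(2.17),cor(2.18),cor(1.6),2report,p.14-16,p.5}(1) and the second is that same condition with $S$ and $T$ interchanged. Applying the theorem in both directions, the first half is equivalent to $S\subseteq\textup{add}(T\cup\{_{R}R\})$ and to ``every $S$-pure-projective module is $T$-pure-projective'' and to ``every $S$-pure-injective module is $T$-pure-injective'' and to $D^*_{\mathcal G}\subseteq\textup{Prod}((D_{\mathcal H}\cup R_R)^*)$; the reversed half gives the four companion containments with $S$ and $T$ swapped and with $\mathcal G$ and $\mathcal H$ swapped. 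I would then simply pair up these containments: $(1)\Leftrightarrow$ [$(1)$ of the theorem in both directions] $\Leftrightarrow$ [$(3)$ of the theorem in both directions] $=$ statement $(2)$ here (equality of pure-projectivity classes); $\Leftrightarrow$ [$(6)$ of the theorem in both directions] $=$ statement $(4)$ here (equality of pure-injectivity classes); $\Leftrightarrow$ [$(4)$ of the theorem in both directions], namely $S\subseteq\textup{add}(T\cup\{_{R}R\})$ together with $T\subseteq\textup{add}(S\cup\{_{R}R\})$, which is precisely $\textup{add}(S\cup\{_{R}R\})=\textup{add}(T\cup\{_{R}R\})$, i.e.\ statement $(3)$ here; and $\Leftrightarrow$ [$(7)$ of the theorem in both directions], which is $D^*_{\mathcal G}\subseteq\textup{Prod}((D_{\mathcal H}\cup R_R)^*)$ together with the reversed containment, i.e.\ statement $(5)$ here once we note that the bracketed $1\times 1$ zero matrix is, by the convention recorded after Proposition~\ref{cor:1.18(2.5),2report,p.11}, exactly what contributes the $R_R$ summand. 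Statement $(6)$, the right-hand versions, follows at once from part $(8)$ of the theorem, which already asserts the full left/right symmetry.

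The two small points requiring a word of care, rather than genuine obstacles, are the bookkeeping for statements $(3)$ and $(5)$. For $(3)$, I need to confirm that the two containments of \textup{add}-closures combine into equality of classes and not merely a symmetric pair of one-sided statements; this is immediate since $\textup{add}(S\cup\{_{R}R\})\subseteq\textup{add}(T\cup\{_{R}R\})$ and the reverse containment together give set equality. For $(5)$, I must match the notation: the set $\{D^*_{G}\mid G\in\mathcal G\cup\{\underset{1\times1}{0}\}\}$ is $D^*_{\mathcal G}$ enlarged by the dual of the module determined by the $1\times 1$ zero matrix, which is $R_R^*$ under the stated interpretation $D_{\emptyset}=\{R_R\}$, so $\textup{Prod}(\{D^*_{G}\mid G\in\mathcal G\cup\{\underset{1\times1}{0}\}\})=\textup{Prod}((D_{\mathcal G}\cup R_R)^*)$ and likewise for $\mathcal H$; then equality of these two \textup{Prod}-classes is exactly the conjunction of the two containments coming from condition $(7)$ of the theorem applied in each direction.

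In short, the corollary contains no new content beyond Theorem~\ref{thm:2.1(2.15),(2.17),cor(2.18),cor(1.6),2report,p.14-16,p.5}; the entire proof is the remark that symmetrizing each equivalence of the theorem turns its seven one-directional conditions into the five bidirectional conditions listed here, with $(6)$ delivered gratis by the theorem's built-in right/left symmetry. I would therefore write the proof as a single compact paragraph invoking the theorem in both directions and identifying the notational conventions, with no appeal to any construction not already in hand.
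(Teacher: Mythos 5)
Your proposal is correct and matches the paper exactly: the paper gives no separate argument, stating only that the corollary ``is immediately obtained'' from Theorem~\ref{thm:2.1(2.15),(2.17),cor(2.18),cor(1.6),2report,p.14-16,p.5}, which is precisely your symmetrization of the theorem's one-sided conditions (your care over $\textup{add}$ and $\textup{Prod}$ being idempotent closure operators is exactly what makes ``immediate'' legitimate). One trivial remark: the identification $D_{0_{1\times 1}}=R_R$ follows directly from the definition $D_H=R^n/\textup{im}(\lambda_H)$ with $\lambda_0=0$, rather than from the convention $D_{\emptyset}=\{R_R\}$, but this changes nothing.
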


 A short exact sequence $\left(\Sigma\right)$ of left $($resp. right$)$ $R$-modules is called $(m,n)$-pure if it remains exact when tensored with any $(m,n)$-presented right $($resp. left$)$ $R$-module. A left $R$-module $M$ is said to be $(m,n)$-pure-projective $($resp.  $(m,n)$-pure-injective$)$ if it is projective $($resp. injective$)$  relative to every $(m,n)$-pure exact sequence of left $R$-modules . A short exact sequence $\left(\Sigma\right)$ of left $($or right$)$  $R$-modules is called $(\aleph_{0},n)$-pure exact $($resp. $(m,\aleph_{0})$-pure exact$)$ if, for each positive integer $m$ $($resp. $n)$ $\left(\Sigma\right)$ is $(m,n)$-pure \cite{AlkaCo11}.
Observe that the $(m,n)$-pure exact
sequences of left $R$-modules are
exactly the $L_{\mathcal H}$-pure  exact sequences, where $\mathcal H$=$M_{m\times n}(R)$, and the $(n,m)$-pure exact
sequences of right $R$-modules are exactly the $D_{\mathcal H}$-pure  exact sequences of right modules. Also, $(\aleph_{0},n)$-pure
exact sequences of left $R$-modules
are exactly the  $L_{\mathcal H}$-pure  exact sequences, where $\mathcal H$= $\underset{m\in\mathbb{Z^+}}{\bigcup} M_{m\times n}(R)$ and then the  $(n,\aleph_{0})$-pure
exact sequences of right $R$-modules are exactly the $D_{\mathcal H}$-pure exact
sequences. Note that for left modules $(n,m)$-presented implies $(m,n)$-pure-projective, where as for right modules $(n,m)$-presented implies $(n,m)$-pure-projective. For all $n,m,s,t\in \mathbb Z^{+}$ with $n\geq s$ and $m\geq t$, since every $(t,s)$-presented right  $R$-module is $(m,n)$-presented it follows that every $(m,n)$-pure exact sequence of left $R$-modules is $(t,s)$-pure.

\begin{cor}\label{cor:2.3(2.15(b)),p.3}
Let $n,m,s,t\in \mathbb Z^{+}$.  Then the following statements are equivalent:

$\left(1\right)$ Every $(m,n)$-pure short exact sequence of
left $R$-modules is $(s,t)$-pure.

$\left(2\right)$ Every $(n,m)$-pure short exact sequence of
right $R$-modules is $(t,s)$-pure.

$\left(3\right)$ Every  $(s,t)$-pure-projective  $($resp. $(s,t)$-pure-injective$)$ left $R$-module  is $(m,n)$-pure-pr- ojective $($resp. $(m,n)$-pure-injective$)$.

$\left(4\right)$  Every  $(t,s)$-presented left $R$-module is   in \textup{add}$(\{M \mid M$ is an  $(n,m)$-presented left $R$-module$\}).$

$\left(5\right)$ Every  $(s,t)$-presented right $R$-module is   in \textup{add}$(\{M \mid M$ is an  $(m,n)$-presented right $R$-module$\}).$\end{cor}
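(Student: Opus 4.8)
The plan is to read the corollary directly off Theorem~\ref{thm:2.1(2.15),(2.17),cor(2.18),cor(1.6),2report,p.14-16,p.5} by specialising its matrix sets to fixed sizes. Concretely, I would take $\mathcal{G}=M_{s\times t}(R)$ and $\mathcal{H}=M_{m\times n}(R)$ and set $S=L_{\mathcal{G}}$, $T=L_{\mathcal{H}}$; these are classes of finitely presented left $R$-modules, so the hypotheses of Theorem~\ref{thm:2.1(2.15),(2.17),cor(2.18),cor(1.6),2report,p.14-16,p.5} hold with $L_{\mathcal{G}}$-purity${}=S$-purity and $L_{\mathcal{H}}$-purity${}=T$-purity by construction. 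By the observations recorded immediately before the corollary, $L_{\mathcal{G}}$-purity coincides with $(s,t)$-purity and $L_{\mathcal{H}}$-purity with $(m,n)$-purity for left modules, while on the right $D_{\mathcal{G}}$-purity is $(t,s)$-purity and $D_{\mathcal{H}}$-purity is $(n,m)$-purity. Moreover, as $G$ ranges over $M_{s\times t}(R)$ the modules $L_{G}$ run through exactly the $(t,s)$-presented left modules and the $D_{G}$ through exactly the $(s,t)$-presented right modules; similarly $L_{H}$ gives all $(n,m)$-presented left and $D_{H}$ all $(m,n)$-presented right modules as $H$ ranges over $M_{m\times n}(R)$.

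With this dictionary in place, each clause of the corollary becomes a verbatim instance of a clause of Theorem~\ref{thm:2.1(2.15),(2.17),cor(2.18),cor(1.6),2report,p.14-16,p.5}. Statement $(1)$ is its clause $(1)$; the pure-projective half of statement $(3)$ is clause $(3)$ and the pure-injective half is clause $(6)$. For the right-hand assertions I would invoke clause $(8)$: the right analogue of clause $(1)$ says that every $D_{\mathcal{H}}$-pure sequence of right modules is $D_{\mathcal{G}}$-pure, which under the dictionary is precisely statement $(2)$, and the right analogue of the add-condition (treated below) is statement $(5)$. Since Theorem~\ref{thm:2.1(2.15),(2.17),cor(2.18),cor(1.6),2report,p.14-16,p.5} asserts that clauses $(1)$, $(3)$, $(4)$, $(6)$ and all of their right-hand versions are mutually equivalent, the matched statements $(1)$--$(5)$ of the corollary are equivalent to one another.

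The only point requiring care is reconciling the add-conditions $(4)$ and $(5)$, which omit the free summand present in Theorem~\ref{thm:2.1(2.15),(2.17),cor(2.18),cor(1.6),2report,p.14-16,p.5}: its clause $(4)$ reads $S\subseteq\textup{add}(T\cup\{{}_RR\})$ and the right analogue reads $D_{\mathcal{G}}\subseteq\textup{add}(D_{\mathcal{H}}\cup\{R_R\})$. I would dispose of the extra summand by noting that the zero matrix belongs to $M_{m\times n}(R)$, and that the module it determines is $L_{0}=R^{n}/\textup{im}(\rho_{0})=R^{n}$, the free left module of rank $n$, which is $(n,m)$-presented and has ${}_RR$ as a direct summand; hence ${}_RR\in\textup{add}(T)$ and $\textup{add}(T\cup\{{}_RR\})=\textup{add}(T)$. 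The same computation with $D_{0}=R^{m}$ shows $R_R\in\textup{add}(D_{\mathcal{H}})$. Thus clause $(4)$ of the theorem becomes exactly statement $(4)$ --- every $(t,s)$-presented left module lies in $\textup{add}$ of the $(n,m)$-presented left modules --- and its right analogue becomes statement $(5)$. This absorption of the free module is the one nontrivial step; everything else is substitution into the already-established equivalences of Theorem~\ref{thm:2.1(2.15),(2.17),cor(2.18),cor(1.6),2report,p.14-16,p.5}.
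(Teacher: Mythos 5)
Your proof is correct and takes essentially the same route as the paper: the paper's proof likewise sets $\mathcal G=M_{s\times t}(R)$, $\mathcal H=M_{m\times n}(R)$, puts $S=L_{\mathcal G}$, $T=L_{\mathcal H}$, and applies Theorem~\ref{thm:2.1(2.15),(2.17),cor(2.18),cor(1.6),2report,p.14-16,p.5}. The only difference is that you make explicit the translation dictionary and the absorption of the free module into $\textup{add}$ of the $(n,m)$-presented modules (via $L_{0}=R^{n}$ and $D_{0}=R^{m}$), steps the paper leaves implicit.
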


\begin{proof}
 Take  $S=L_{\mathcal{G}}$ and  $T=L_{\mathcal{H}}$   where  $\mathcal G$=${M_{s\times t}}(R)$
 and $\mathcal H$=${M_{m\times n}}(R)$
and apply Theorem~\ref{thm:2.1(2.15),(2.17),cor(2.18),cor(1.6),2report,p.14-16,p.5}.\end{proof}

\begin{prop}\label{thm:2.7(3.20),blue,p.5}
Let S and T be classes of finitely presented left R-modules. Consider the following  statements:

$\left(1\right)$ Every S-pure short exact sequence of left $R$-modules
is T-pure.

$\left(2\right)$ Each indecomposable direct summand of a module in T is in \textup{add}$(S\cup\{_{R}R\}).$

$\left(3\right)$ Each indecomposable direct summand of a module in T
is a direct summand of a module in $S\cup\{_{R}R\}.$

Then $\left(1\right)$ implies $\left(2\right)$ and

$\left(a\right)$ If each  indecomposable direct summand of a module in T
has  local endomorphism ring then $\left(2\right)$ implies $\left(3\right).$

$\left(b\right)$ If each module in T is a
direct sum of indecomposable modules then $\left(3\right)$
implies $\left(1\right).$ \end{prop}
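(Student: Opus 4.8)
The strategy is to translate everything through the equivalence of Theorem~\ref{thm:2.1(2.15),(2.17),cor(2.18),cor(1.6),2report,p.14-16,p.5}: applying that theorem with the roles of $S$ and $T$ interchanged, condition $(1)$ here---every $S$-pure exact sequence is $T$-pure---is equivalent to the inclusion $T\subseteq\textup{add}(S\cup\{_RR\})$. With this in hand the proposition reduces to elementary facts about direct-sum decompositions, the only substantial one being the passage needed for part $(a)$.

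For $(1)\Rightarrow(2)$, once $(1)$ is rewritten as $T\subseteq\textup{add}(S\cup\{_RR\})$ the argument is immediate: each $M\in T$ is a direct summand of a finite direct sum $\bigoplus_{i=1}^{k}X_{i}$ with $X_{i}\in S\cup\{_RR\}$, and any indecomposable direct summand $N$ of $M$ is then also a direct summand of $\bigoplus_{i=1}^{k}X_{i}$, so $N\in\textup{add}(S\cup\{_RR\})$, which is $(2)$. (Alternatively one can argue directly, applying Proposition~\ref{cor:1.18(2.5),2report,p.11} to $M\in T$ to obtain an $S$-pure, hence by $(1)$ $T$-pure, presentation that splits because $M$ is $T$-pure-projective.)

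The real content is part $(a)$. Assume $(2)$ and let $N$ be an indecomposable direct summand of a module in $T$ with $\textup{End}_{R}(N)$ local. By $(2)$, $N$ is a direct summand of a finite direct sum $\bigoplus_{i=1}^{k}X_{i}$, say $\bigoplus_{i=1}^{k}X_{i}=N\oplus N'$, with each $X_{i}\in S\cup\{_RR\}$. The plan is to invoke the fact that a module with local endomorphism ring has the finite exchange property: this supplies direct summands $B_{i}$ of $X_{i}$ with $\bigoplus_{i=1}^{k}X_{i}=N\oplus\bigoplus_{i=1}^{k}B_{i}$. Writing $X_{i}=B_{i}\oplus C_{i}$ and comparing the two complements $N$ and $\bigoplus_{i}C_{i}$ of the common summand $\bigoplus_{i}B_{i}$ gives $N\cong\bigoplus_{i=1}^{k}C_{i}$; since $N$ is indecomposable and nonzero, exactly one $C_{i}$ is nonzero and is isomorphic to $N$. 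Thus $N$ is isomorphic to a direct summand of a single $X_{i}\in S\cup\{_RR\}$, which is $(3)$. I expect this step---upgrading ``direct summand of a finite direct sum'' to ``direct summand of a single member''---to be the main obstacle, as it is exactly where indecomposability and the Krull--Schmidt--Azumaya exchange machinery are essential and where the hypothesis of $(a)$ is used.

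For $(b)$, assume $(3)$ together with the hypothesis that every module in $T$ is a direct sum of indecomposables; it suffices to establish $T\subseteq\textup{add}(S\cup\{_RR\})$ and then quote Theorem~\ref{thm:2.1(2.15),(2.17),cor(2.18),cor(1.6),2report,p.14-16,p.5} to conclude $(1)$. Given $M\in T$, write $M=\bigoplus_{j}N_{j}$ with each $N_{j}$ indecomposable; since $M$ is finitely generated, a finite generating set is contained in a finite sub-sum, which forces the index set to be finite, so $M=\bigoplus_{j=1}^{n}N_{j}$. Each $N_{j}$ is an indecomposable direct summand of $M\in T$, so by $(3)$ it is a direct summand of some $X_{j}\in S\cup\{_RR\}$; hence $M$ is a direct summand of $\bigoplus_{j=1}^{n}X_{j}$ and $M\in\textup{add}(S\cup\{_RR\})$, as required.
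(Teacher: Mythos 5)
Your proposal is correct and follows essentially the same route as the paper: $(1)\Rightarrow(2)$ via Theorem~\ref{thm:2.1(2.15),(2.17),cor(2.18),cor(1.6),2report,p.14-16,p.5}, part $(a)$ via the finite exchange property of modules with local endomorphism ring, and part $(b)$ by reducing to $T\subseteq\textup{add}(S\cup\{_{R}R\})$ and quoting the $S$-pure-projectivity characterization. The only cosmetic differences are that you spell out the exchange argument (isolating a single summand $X_j$ with $N\cong C_j$) where the paper cites Facchini's Lemma 2.7 directly, and for $(b)$ you invoke Theorem~\ref{thm:2.1(2.15),(2.17),cor(2.18),cor(1.6),2report,p.14-16,p.5} rather than Proposition~\ref{prop:26(2.13),2report,p.13}, which amounts to the same underlying argument.
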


\begin{proof}
$\left(1\right)\Rightarrow\left(2\right)$ This follows by  Theorem~\ref{thm:2.1(2.15),(2.17),cor(2.18),cor(1.6),2report,p.14-16,p.5}.

$\left(a\right)$ Assume that each  indecomposable direct summand $M$ of a module in $T$
has  local endomorphism ring, thus by hypothesis, $M\in$add$(S\cup\{_{R}R\}).$
 Suppose that  $M$  is
a direct summand of $\underset{i\in I}{\bigoplus}F_{i}$ where $F_{i}\in S\cup\{_{R}R\},$
for all $i\in I$ and $I$ is a finite set and let $B$ be a submodule
of $\underset{i\in I}{\bigoplus}F_{i}$ such that $M\oplus B=\underset{i\in I}{\bigoplus}F_{i}.$
Since  End$_{R}(M)$ is local we have $($see, e.g., \cite[Theorem 2.8, p.37]{Fac98}$)$
 that $M$ has the finite exchange property. So  $($see, e.g., \cite[Lemma 2.7, p.37]{Fac98}$)$
there is an index $j\in I$ and a direct sum decomposition $F_{j}=B_{j}\oplus C_{j}$
of $F_{j}$  with
$M\simeq C_{j}$.
Hence $M$ is a direct summand of a module in $S\cup\{_{R}R\}.$

$\left(b\right)$ This  follows directly using Proposition~\ref{prop:26(2.13),2report,p.13}.
\end{proof}

A ring $R$ is said to be Krull-Schmidt if every finitely
presented left (or right) $R$-module is a direct sum of modules with
local endomorphism rings $($see \cite[p.97]{Fac98}$)$.

\begin{cor}\label{cor:2.8(3.21),blue,p.6}
 Let R be a left Krull-Schmidt ring and
let n,m be positive integers. Then the following statements are equivalent:

$\left(1\right)$ $(m,n)$-purity=$(\aleph_{0},n)$-purity for short exact
sequences of left $R$-modules.

$\left(2\right)$ For each $s\in \mathbb Z^{+},$ each indecomposable $(n,s)$-presented
left R-module is a direct summand of an $(n,m)$-presented
left R-module.

$\left(3\right)$ $(n,m)$-purity=$(n,\aleph_{0})$-purity for short exact
sequences of right $R$-modules.

$\left(4\right)$ For each $s\in \mathbb Z^{+},$ each indecomposable $(s,n)$-presented
right R-module is a direct summand of an $(m,n)$-presented
right R-module.
\end{cor}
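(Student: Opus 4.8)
The plan is to obtain Corollary~\ref{cor:2.8(3.21),blue,p.6} as a direct specialisation of the machinery already set up, primarily Proposition~\ref{thm:2.7(3.20),blue,p.5} together with the dictionary relating $(m,n)$-purity to $L_{\mathcal{H}}$-purity. First I would translate each purity hypothesis into the language of classes of matrices. By the remarks preceding Corollary~\ref{cor:2.3(2.15(b)),p.3}, $(m,n)$-purity for left modules is $L_{\mathcal{H}}$-purity with $\mathcal{H}=M_{m\times n}(R)$, equivalently $T$-purity where $T$ is the class of $(n,m)$-presented left modules; and $(\aleph_0,n)$-purity is $L_{\mathcal{G}}$-purity with $\mathcal{G}=\bigcup_{s\in\mathbb{Z}^+}M_{s\times n}(R)$, equivalently $S$-purity where $S$ is the class of \emph{all} $(n,s)$-presented left modules as $s$ ranges over $\mathbb{Z}^+$. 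Since $(m,n)$-purity is always implied by $(\aleph_0,n)$-purity (the latter asks exactness against more presented modules), the content of (1) is precisely that these two notions \emph{coincide}, i.e.\ every $(m,n)$-pure sequence is $(\aleph_0,n)$-pure, which is exactly ``every $S$-pure sequence is $T$-pure'' in the notation of Proposition~\ref{thm:2.7(3.20),blue,p.5}.

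\medskip

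Next I would feed this into Proposition~\ref{thm:2.7(3.20),blue,p.5}. The crucial point is that over a left Krull--Schmidt ring, every finitely presented left module---in particular every module in $T$, the class of $(n,m)$-presented modules---is a direct sum of modules with local endomorphism rings. Thus both hypotheses (a) and (b) of Proposition~\ref{thm:2.7(3.20),blue,p.5} are satisfied simultaneously, so for these classes the implications $(1)\Rightarrow(2)\Rightarrow(3)\Rightarrow(1)$ close up and all three statements of that proposition become equivalent. I would then identify statement (3) of Proposition~\ref{thm:2.7(3.20),blue,p.5}---``each indecomposable direct summand of a module in $T$ is a direct summand of a module in $S\cup\{{}_RR\}$''---with the explicit condition (2) of the corollary. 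Here ${}_RR$ may be absorbed: since ${}_RR=R^n/0$ is itself $(n,m)$-presented for the relevant $n$ (taking the zero submodule, or noting $R$ is a summand of $R^n$), the class $T$ already contains the relevant copies of $R$, and the indecomposable summands of modules in $T$ are exactly the indecomposable $(n,s)$-presented modules arising as summands; membership in $S\cup\{{}_RR\}$ amounts to being a direct summand of some $(n,m)$-presented module. This yields the equivalence $(1)\Leftrightarrow(2)$.

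\medskip

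For the right-hand equivalences $(3)$ and $(4)$, I would invoke the left/right symmetry already codified in the duality $L_H\leftrightarrow D_H$ and in part (8) of Theorem~\ref{thm:2.1(2.15),(2.17),cor(2.18),cor(1.6),2report,p.14-16,p.5}. Concretely, an $n\times m$ matrix $H$ simultaneously presents the left module $L_H$ (which is $(m,n)$-presented) and the right module $D_H$ (which is $(n,m)$-presented), and Theorem~\ref{thm:(1.3),2report,p3}(1)$\Leftrightarrow$(6) shows that $L_{\mathcal{H}}$-purity on the left dualises to $D_{\mathcal{H}}$-purity on the right. Under this correspondence, left $(m,n)$-purity matches right $(n,m)$-purity and left $(\aleph_0,n)$-purity matches right $(n,\aleph_0)$-purity, so statement (3) of the corollary is exactly the right-hand form of (1); the equivalence $(1)\Leftrightarrow(3)$ is then Theorem~\ref{thm:2.1(2.15),(2.17),cor(2.18),cor(1.6),2report,p.14-16,p.5}(1)$\Leftrightarrow$(8). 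Applying the same left/right version of Proposition~\ref{thm:2.7(3.20),blue,p.5} on the right (valid because $R$ is also right Krull--Schmidt, the definition being left/right symmetric) gives $(3)\Leftrightarrow(4)$ just as $(1)\Leftrightarrow(2)$.

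\medskip

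\textbf{The main obstacle} I anticipate is bookkeeping rather than conceptual: correctly tracking which index sits in which slot through the two independent transpositions---the $(m,n)$ versus $(n,m)$ swap coming from the $L_H$/$D_H$ duality, and the presented-versus-pure-projective swap noted before Corollary~\ref{cor:2.3(2.15(b)),p.3} (for left modules $(n,m)$-presented implies $(m,n)$-pure-projective). Getting these conventions aligned so that the ``$n$'' is genuinely held fixed while only the second parameter varies between $m$ and $\aleph_0$ is where an error would most easily creep in. A secondary point needing a line of justification is the legitimacy of dropping $\{{}_RR\}$ from $S\cup\{{}_RR\}$ in passing to the clean statement (2): one must confirm that ${}_RR$ contributes no indecomposable summand outside the class of direct summands of $(n,m)$-presented modules, which holds because $R$ itself is a summand of $R^n$ and hence is $(n,m)$-presented for every $m$, so it is already accounted for.
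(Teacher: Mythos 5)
Your proposal is correct and takes essentially the same route as the paper's proof: put $S=L_{\mathcal G}$ with $\mathcal G=M_{m\times n}(R)$ and $T=L_{\mathcal H}$ with $\mathcal H=\bigcup_{s\in\mathbb{Z}^{+}}M_{s\times n}(R)$, apply Proposition~\ref{thm:2.7(3.20),blue,p.5} (the Krull--Schmidt hypothesis supplying both (a) and (b)), and pass to right modules via Corollary~\ref{cor:2.3(2.15(b)),p.3}, i.e.\ Theorem~\ref{thm:2.1(2.15),(2.17),cor(2.18),cor(1.6),2report,p.14-16,p.5}(8). Only cosmetic points differ from the paper: your labels $S$ and $T$ get transposed between your setup and your application of Proposition~\ref{thm:2.7(3.20),blue,p.5}, and ${}_RR$ need not itself be $(n,m)$-presented in general --- the correct absorption (which you also state) is that ${}_RR$ is a direct summand of $R^{n}$, which is $(n,m)$-presented.
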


\begin{proof} Put $S=L_\mathcal G$ and $T=L_\mathcal H$,  where
$\mathcal G$=${M_{m\times n}}(R)$ and $\mathcal{H}=\underset{t\in\mathbb{Z^+}}{\bigcup} M_{t\times n}(R).$
Since $R$ is Krull-Schmidt,  each  indecomposable direct summand of a module in $T$
has  local endomorphism ring and each module in $T$ is a
direct sum of indecomposable modules.
Hence the result follows on  applying  Proposition~\ref{thm:2.7(3.20),blue,p.5} and Corollary~\ref{cor:2.3(2.15(b)),p.3}.

\end{proof}

\section{$(m,n)$-Purity over semiperfect rings}

\subparagraph*{\textmd{  Let $M$ be a finitely presented left (or right) $R$-module, we denote
by gen$(M)$ its minimal number of generators and by rel$(M)$ the
minimal number of relations on these generators. Therefore there is an exact sequence  $ R^{\textup{rel}(M)}\rightarrow R^{\textup{gen}(M)}\rightarrow M\rightarrow0$}} and it follows easily that rel$(M)$  is the minimal number of relations on any generating set of $M$.

\begin{rem}\label{rem:3.1(3.1),2report,p.16}
Let M be a finitely presented left R-module and let N be a direct summand of M. Then it is easy to see that {\rm gen}$(N)\leq${\rm gen}$(M)$ and {\rm rel}$(N)\leq${\rm rel}$(M)+${\rm gen}$(M).$
\end{rem}

\begin{prop}\label{prop:3.2(3.2),2report,p.16}
Let H be any matrix over a ring R such that  {\rm End}$_{R}(L_{H})$ is  local and $L_H$ is not projective. Set  $\mathcal H$=$\bigcup\{{M_{r\times q}}(R) \mid$ q<{\rm gen}$(L_{H})$ or $r+q<${\rm rel}$(L_{H})$ $\}.$  Then $L_{H}$ is an  $L_H$-pure-projective left R-module
which is not $L_{\mathcal H}$-pure-projective and hence not $L_{\mathcal G}$-pure-projective for any ${\mathcal G}\subseteq{\mathcal H}$. In particular $L_H$-purity and $L_{\mathcal H}$-purity are not equivalent.\end{prop}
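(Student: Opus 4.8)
The plan is to show two things separately: first, that $L_H$ is $L_H$-pure-projective (which is nearly immediate), and second, that $L_H$ is \emph{not} $L_{\mathcal H}$-pure-projective, which is the substantive claim. For the first, observe that $L_H$ lies in $L_{\mathcal H}\cup\{_RR\}$ for $\mathcal H = \{H\}$, so by Proposition~\ref{cor:1.18(2.5),2report,p.11}(2) it is $L_H$-pure-projective, since $\textup{Add}(L_H\cup\{_RR\})$ is precisely the class of $L_H$-pure-projectives. The main work is the non-membership statement. By Proposition~\ref{cor:1.18(2.5),2report,p.11}(2) again, $L_H$ is $L_{\mathcal H}$-pure-projective if and only if $L_H\in\textup{Add}(L_{\mathcal H}\cup\{_RR\})$, i.e.\ $L_H$ is a direct summand of a direct sum of copies of modules from $L_{\mathcal H}\cup\{_RR\}$. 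So I would assume this for contradiction and derive a violation of the defining size constraints on $\mathcal H$.

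First I would reduce from an infinite direct sum to a finite one. Since $L_H$ is finitely presented (being $(\textup{gen}(L_H),\textup{rel}(L_H))$-presented), and since $\textup{End}_R(L_H)$ is local so that $L_H$ is indecomposable, any splitting $L_H\oplus B=\bigoplus_{i\in I}F_i$ with each $F_i\in L_{\mathcal H}\cup\{_RR\}$ can be cut down: because $L_H$ has local endomorphism ring it has the finite exchange property (as invoked in the proof of Proposition~\ref{thm:2.7(3.20),blue,p.5}), so $L_H$ is in fact isomorphic to a direct summand $C_j$ of a single $F_j\in L_{\mathcal H}\cup\{_RR\}$. Now $F_j\neq{_RR}$, for otherwise $L_H$ would be a direct summand of $_RR$, hence projective, contradicting the hypothesis that $L_H$ is not projective. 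Thus $L_H$ is isomorphic to a direct summand of some $L_G$ with $G\in\mathcal H$, say $G\in M_{r\times q}(R)$ so that $L_G$ is $(q,r)$-presented, meaning $\textup{gen}(L_G)\le q$ and $\textup{rel}(L_G)\le r$.

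Finally I would extract the numerical contradiction using Remark~\ref{rem:3.1(3.1),2report,p.16}. Since $L_H$ is a direct summand of $L_G$, that remark gives
\[
\textup{gen}(L_H)\le\textup{gen}(L_G)\le q
\qquad\text{and}\qquad
\textup{rel}(L_H)\le\textup{rel}(L_G)+\textup{gen}(L_G)\le r+q.
\]
But by the definition of $\mathcal H$, the matrix $G\in M_{r\times q}(R)$ satisfies either $q<\textup{gen}(L_H)$ or $r+q<\textup{rel}(L_H)$; either alternative directly contradicts one of the two displayed inequalities. This contradiction shows $L_H\notin\textup{Add}(L_{\mathcal H}\cup\{_RR\})$, so $L_H$ is not $L_{\mathcal H}$-pure-projective. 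The statement for any $\mathcal G\subseteq\mathcal H$ follows because $L_{\mathcal G}\subseteq L_{\mathcal H}$ forces $\textup{Add}(L_{\mathcal G}\cup\{_RR\})\subseteq\textup{Add}(L_{\mathcal H}\cup\{_RR\})$. The last sentence of the proposition is then automatic: if $L_H$-purity and $L_{\mathcal H}$-purity coincided, their pure-projective classes would agree by Corollary~\ref{cor:2.2(2.15(a)) general case of (cor.(2.19),2report,p16)}, contradicting that $L_H$ distinguishes them.

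The main obstacle is the reduction step: one must be careful that "direct summand of a possibly infinite direct sum of finitely presented modules" can be replaced by "direct summand of a single member of the family." This is exactly where the local-endomorphism-ring hypothesis earns its keep, via the finite exchange property, and I would make sure the application of that property is clean before grinding the inequalities. Everything after the reduction is just bookkeeping with $\textup{gen}$ and $\textup{rel}$.
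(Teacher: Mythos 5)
Your proposal is correct and follows essentially the same route as the paper's own proof: both use Proposition~\ref{cor:1.18(2.5),2report,p.11} to translate pure-projectivity into membership in $\textup{Add}(L_{\mathcal H}\cup\{_RR\})$, invoke the local endomorphism ring (via the exchange-property argument of Proposition~\ref{thm:2.7(3.20),blue,p.5}) to reduce to a direct summand of a single $L_G$ or of $_RR$, rule out the projective case by hypothesis, and then obtain the numerical contradiction from Remark~\ref{rem:3.1(3.1),2report,p.16}. Your explicit treatment of the $\mathcal G\subseteq\mathcal H$ claim and the final sentence merely spells out what the paper leaves implicit.
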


\begin{proof}  By  Proposition~\ref{cor:1.18(2.5),2report,p.11}, $L_{H}$ is   $L_H$-pure-projective and, if $L_H$ is    $L_{\mathcal H}$-pure-projective, then  $L_{H}\in$ Add$(L_{\mathcal H}\cup\{_{R}R\}).$
Since End$_{R}(L_H)$ is  local, $L_H$
is, as in Proposition~\ref{thm:2.7(3.20),blue,p.5},  a direct summand of a module in $L_{\mathcal H}\cup\{_{R}R\}.$  Thus  either $L_H$ is a direct summand of $L_G,$ where $\underset{r\times q}{G}\in\mathcal H$ or $L_H$ is projective. If $L_H$ is a direct summand of $L_G,$  by Remark~\ref{rem:3.1(3.1),2report,p.16}, {\rm gen}$(L_H)\leq$gen$(L_G)\leq q$
and rel$(L_H)\leq$rel$(L_G)+$
gen$(L_G)\leq r+q$  and this contradicts  $G\in\mathcal H$.
  \end{proof}

Note that if  $M$ is a left $R$-module, $I$ is  a left ideal of $R$ and  $\alpha\in {\rm End}_{R}(M)$
then there is an induced homomorphism $\overline{\alpha}:M/IM\rightarrow M/IM$ which is an isomorphism if  $\alpha$ is an isomorphism.

Let $R$ be a ring and let $J$ be its Jacobson radical. Recall
that $R$ is semiperfect if $R/J$ is semisimple and idempotents lift
modulo $J$. Say that an idempotent $e\in R$ is local if $eRe$
is a local ring. We have $($e.g., \cite[42.6, p.375]{Wis91}$)$ that $R$
semiperfect if and only if $R$=$e_{1}R\oplus e_{2}R\oplus\cdots\oplus e_{n}R,$
for local orthogonal idempotents $e_{i}$.

\begin{lem}\label{lem:3.10(3.10),2report,p.18} Let $m\in \mathbb Z^{+}$.
Suppose that  one of the following two conditions is satisfied.

$\left(1\right)$ The  ring $R$ is   semiperfect  and   $I$ is  a nonzero ideal
 with $\textup{gen}(I_R)=m$ and  $I\subseteq e_{j}R$
for some local idempotent $e_j$ of R.

$\left(2\right)$ The ring $R$ is   Krull-Schmidt  and  $I$ is a nonzero right ideal
with $\textup{gen}(I)=m$ and  $I\subseteq e_{j}R$ for some local idempotent $e_j$ of R.

 Then $e_{j}R/I$ is a finitely presented right R-module
with {\rm gen}$(e_{j}R/I)$=1, {\rm rel}$(e_{j}R/I)$=m and ${\rm End}_{R}(e_{j}R/I)$
is a local ring.\end{lem}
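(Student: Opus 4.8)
Under either hypothesis (1) or (2), $e_jR/I$ is a finitely presented right $R$-module with $\mathrm{gen}(e_jR/I)=1$, $\mathrm{rel}(e_jR/I)=m$, and $\mathrm{End}_R(e_jR/I)$ is local.

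Let me sketch a proof plan.

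**The structure of $e_jR/I$.** Since $I \subseteq e_jR$, the quotient $e_jR/I$ makes sense as a right $R$-module. Because $e_j$ is an idempotent, $e_jR$ is a cyclic projective module generated by $e_j$, and hence $e_jR/I$ is cyclic, generated by the image $\bar{e_j}$ of $e_j$. This gives $\mathrm{gen}(e_jR/I) = 1$ (it's nonzero since $I \neq R$... actually since $I \subseteq e_jR \subsetneq R$, and we need $I \neq e_jR$; this needs $I$ to be a proper submodule of $e_jR$, which should follow since $\mathrm{gen}(I) = m$ finite while... I need to verify $e_jR/I \neq 0$).

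**Computing $\mathrm{rel}$.** We have $e_jR = e_j R$ generated by one element with relation module... Actually the cleanest route: $I$ is generated by $m$ elements as a right $R$-module (given $\mathrm{gen}(I_R) = m$ or $\mathrm{gen}(I) = m$). Since $e_jR$ is projective (a direct summand of $R$), the presentation $0 \to I \to e_jR \to e_jR/I \to 0$ shows that the relation module of the single generator $\bar{e_j}$ is exactly $I$ (up to the kernel of $R \to e_jR$). More carefully: map $R \to e_jR$ by $r \mapsto e_j r$; this is split surjective with a complement, so the relations on $\bar{e_j}$ in $e_jR/I$ correspond to $I$ together with the complementary summand. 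I'd compute this carefully to get $\mathrm{rel}(e_jR/I) = m$. The key point is that $I$ needs exactly $m$ generators, and since $I \subseteq e_jR$ with $e_jR$ projective cyclic, no fewer relations can suffice. This is where I'd show $\mathrm{rel} = m$ rather than $\leq m$: a lower bound argument, likely using minimality of $\mathrm{gen}(I) = m$ and projectivity of $e_jR$.

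**Locality of the endomorphism ring.** This is the step I expect to be the main obstacle, and it is where the two hypotheses (semiperfect vs. Krull-Schmidt) do their work. Under hypothesis (1), $R$ semiperfect means $e_jRe_j$ is local (as $e_j$ is a local idempotent), and I'd try to show $\mathrm{End}_R(e_jR/I) \cong e_jRe_j / (\text{something})$, a quotient of a local ring, hence local. The induced-endomorphism remark just before the lemma ($\alpha$ on $M$ induces $\bar\alpha$ on $M/IM$, isomorphism-preserving) is clearly meant to be used: endomorphisms of $e_jR/I$ should lift to or be detected by endomorphisms of $e_jR \cong e_jRe_j$ acting by left multiplication. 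Under hypothesis (2), Krull-Schmidt, I'd argue $e_jR/I$ is indecomposable (since $e_jR$ is indecomposable as $e_j$ is local, and I'd show the quotient inherits this) and then invoke that indecomposable finitely presented modules over a Krull-Schmidt ring have local endomorphism rings by definition of Krull-Schmidt. The delicate part is transferring locality from $\mathrm{End}(e_jR) = e_jRe_j$ to $\mathrm{End}(e_jR/I)$; I'd use that any endomorphism of the quotient either is an isomorphism or has image in the radical, checking that non-isomorphisms form an ideal via the local structure of $e_jRe_j$.
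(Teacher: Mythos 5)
Your plan has the same skeleton as the paper's proof, and your case (1) is essentially the paper's argument: the paper lifts each $\alpha\in\mathrm{End}_R(e_jR/I)$ through the projection $\pi:e_jR\rightarrow e_jR/I$ to some $\alpha'\in\mathrm{End}_R(e_jR)\simeq e_jRe_j$ (the condition $\alpha'(I)\subseteq I$ is automatic, since $\pi\alpha'(I)=\alpha\pi(I)=0$), inverts $\alpha'$ or $1-\alpha'$ in the local ring $e_jRe_j$, and descends that inverse back to $e_jR/I$. Your packaging of this as ``$\mathrm{End}_R(e_jR/I)$ is a quotient ring of the local ring $e_jRe_j$, hence local'' also works, but note that for the map $e_jRe_j\rightarrow\mathrm{End}_R(e_jR/I)$ to be defined on \emph{all} of $e_jRe_j$ you need every left multiplication to carry $I$ into $I$; that is exactly where two-sidedness of $I$ in hypothesis (1) enters, and you should say so. The genuine gap is in case (2): the step ``$e_jR$ is indecomposable and the quotient inherits this'' fails, because indecomposability does not pass to quotients ($\mathbb{Z}$ is an indecomposable $\mathbb{Z}$-module while $\mathbb{Z}/6\mathbb{Z}$ is not). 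What the paper uses is the stronger fact that $e_jR$ is a \emph{local} module: a Krull--Schmidt ring is semiperfect, so $e_jR$ has the unique maximal submodule $e_jJ$, and every homomorphic image of a local module is again local, hence indecomposable (the paper cites \cite{War71} for this). Only then does the Krull--Schmidt hypothesis, applied to the finitely presented indecomposable module $e_jR/I$, give that $\mathrm{End}_R(e_jR/I)$ is local.

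There is also a problem with your plan for $\mathrm{rel}(e_jR/I)=m$. You correctly locate the kernel of the free cover $R\rightarrow e_jR/I$, $r\mapsto\overline{e_jr}$, as $I\oplus(1-e_j)R$, but the ``careful computation'' you defer would then \emph{not} yield $m$: that kernel generally needs more than $m$ generators. For instance, let $R$ be the ring of upper triangular $2\times2$ matrices over a field, $e_j=e_{11}$, and $I=e_{12}R$ (a nonzero two-sided ideal contained in $e_{11}R$ with $\mathrm{gen}(I_R)=1$); then the kernel $e_{12}R\oplus e_{22}R$ is a direct sum of two copies of the simple module corresponding to $e_{22}$, which needs two generators, whereas $m=1$. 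The equality $\mathrm{rel}(e_jR/I)=\mathrm{gen}(I)=m$ is immediate only if the relations are read off the projective presentation $0\rightarrow I\rightarrow e_jR\rightarrow e_jR/I\rightarrow0$ (one generator $\overline{e_j}$, whose relation module is $I$); this is what the paper's ``clearly $\mathrm{rel}(P/I)=\mathrm{gen}(I)=m$'' amounts to, and it is the count that the subsequent application to the Auslander--Bridger dual $D(e_jR/I)$ actually uses. So this step does not need the lower-bound argument you anticipate; it needs care about which presentation the invariant $\mathrm{rel}$ is computed from, since as a statement about free presentations the equality you set out to prove is false.
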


\begin{proof}
Let $P$=$e_{j}R.$ Then {\rm gen}$(P/I)$=$1$ and clearly {\rm rel}$(P/I)$={\rm gen}$(I)$=$m.$

In case $\left(1\right)$: Since ${\rm End}_{R}(e_{j}R)\simeq e_{j}Re_{j}$
it follows that  ${\rm End}_{R}(P)$ is a local ring.  Let $\alpha\in {\rm End}_{R}(P/I)$ and consider the following diagram:

\[\begin{tikzpicture}
  \matrix (m) [matrix of math nodes, row sep=3em, column sep=3em]
    { \ P&P/I \\
      P &P/I \\ };

\path[->]
(m-1-1)  edge node[auto]  {$\scriptstyle\pi$}  (m-1-2);
\path[->]

 (m-1-2) edge  node[auto] {$\scriptstyle\alpha$} (m-2-2)
(m-2-1)  edge node[auto]  {$\scriptstyle\pi$}  (m-2-2);
\path[dashed ][->]
(m-1-1) edge node[left]  {$\scriptstyle\alpha^{'}$} (m-2-1);
\end{tikzpicture}\] where $\pi$ is the natural epimorphism. By projectivity of $P,$ there exists an $R$-homomorphism  $\alpha^{'}:P\rightarrow P$  such that $\pi \alpha^{'}$=$\alpha \pi$ and
$\alpha^{'}(I) \subseteq I.$ Since ${\rm End}_{R}(P)$ is a local ring,  either $\alpha^{'}$ or $1_{P}$-$\alpha^{'}$ is an isomorphism. The inverse of that isomorphism will, as noted above, induce an isomorphism on $P/I=P/PI$  which will be an inverse of $\alpha$ or $1_{(P/I)}-\alpha$, as appropriate.  Hence ${\rm End}_{R}(e_{j}R/I)$ is a local ring.

In case $\left(2\right)$: Since $e_{j}R$ is a local right $R$-module,
every homomorphic image of $e_{j}R$ is indecomposable \cite[Proposition 4.1, p.246]{War71}.
Hence $e_{j}R/I$ is indecomposable. Since $R$ is Krull-Schmidt,  ${\rm End}_{R}(e_{j}R/I)$ is a local ring.

\end{proof}

Let $R$ be any ring and $M$ be any finitely presented right
$R$-module.  An Auslander-Bridger dual of $M$ is denoted by $D(M)$
and defined as follows. Choose an exact sequence $Q\overset{\phi}{\rightarrow}P\rightarrow M\rightarrow0$
in which $P$ and $Q$ are finitely generated projective right $R$-modules.
Define $D(M)$ to be the cokernel of the homomorphism $\phi^{+}:P^{+}\rightarrow Q^{+}$
where $X^{+}$=${\rm Hom}_{R}(X,R_{R})$, for any right $R$-module $X$
\cite{War75}. Although $D(M)$ depends on the choice of exact sequence, if $D^{\prime}(M)$ is another such dual of $M$ then $D(M)\oplus A\simeq D^{\prime}(M)\oplus B$ for some finitely generated
projective modules, $A,B.$

\begin{lem}\label{lem:3.12(3.12),2report,p.19}
Let $m\in \mathbb Z^{+}$  and let M be any $($1,m$)$-presented right R-module. Then   $D(M)$ is a $(n,m)$-pure-projective left
R-module, for all $n\in \mathbb Z^{+}.$

 \end{lem}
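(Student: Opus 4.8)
```latex
\textbf{Proof proposal.} The plan is to compute an Auslander--Bridger dual
$D(M)$ explicitly from a projective presentation of $M$ and then recognise
the resulting module as $L_H$ for a suitable matrix $H$, at which point
Proposition~\ref{cor:1.18(2.5),2report,p.11} (or the observation that
$(m,n)$-pure exact sequences are exactly $L_{\mathcal H}$-pure exact sequences
with $\mathcal H = M_{m\times n}(R)$) will finish the argument. First I would
use that $M$ is $(1,m)$-presented to write an exact sequence
$R^m \xrightarrow{\phi} R \to M \to 0$ of right $R$-modules, where $R$ and
$R^m$ are finitely generated projective and $\phi$ is given by left
multiplication by some $1\times m$ matrix, i.e.\ $\phi = \lambda_H$ for an
appropriate $m\times 1$ matrix $H$ (I must be careful to track which of the
paper's conventions $\rho_H$, $\lambda_H$ applies on which side, since the
presenting map for a right module is $\lambda_H$).

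Next I would dualise via $(-)^+ = \mathrm{Hom}_R(-,R_R)$. Applying this to the
presentation yields $\phi^+ : R^+ \to (R^m)^+$, which, under the canonical
identifications $R^+ \cong {_RR}$ and $(R^m)^+ \cong {_RR}^m$, becomes right
multiplication by the transpose/same matrix $H$, that is $\phi^+ = \rho_H$ (up
to the standard identification). By definition $D(M) = \mathrm{coker}(\phi^+)
= {_RR}^m/\mathrm{im}(\rho_H)$, and comparing with the definition of $L_H$ in
the opening paragraph of Section~1 I expect to read off $D(M) \cong L_H$ for
this $H$, which is an $(m,1)$-presented left $R$-module.

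Having identified $D(M)$ as $L_H$ for the single matrix $H$, it remains to see
that $L_H$ is $(n,m)$-pure-projective for every $n$. Here I would invoke that,
by Proposition~\ref{cor:1.18(2.5),2report,p.11}(2), the $L_H$-pure-projective
modules are exactly $\mathrm{Add}(\{L_H\}\cup\{{_RR}\})$, so $L_H$ itself is
$L_H$-pure-projective; and since $H$ has $m$ columns (so that every
$L_H$-pure exact sequence is automatically $(n,m)$-pure for the appropriate
reading of the size conventions), every $(n,m)$-pure exact sequence is in
particular $L_H$-pure. More directly, the remark preceding
Corollary~\ref{cor:2.3(2.15(b)),p.3} shows that an $(n,m)$-pure sequence of
left modules is $L_{\mathcal H}$-pure with $\mathcal H = M_{n\times m}(R)$, and
$H$ lies in $M_{?\times m}(R)$, so $L_H$ is projective relative to every such
sequence. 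This gives $(n,m)$-pure-projectivity for all $n \in \mathbb{Z}^+$.

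The main obstacle I anticipate is purely bookkeeping: getting the size
parameters and the left/right, row/column conventions of $\rho_H$, $\lambda_H$,
$L_H$, $D_H$ exactly right so that the dual of a $(1,m)$-presented right module
comes out as an object presented by a matrix with the correct number of
columns ($m$), thereby landing in the class of $(n,m)$-pure-projectives rather
than, say, $(m,n)$-pure-projectives. The homological content (dualising a
presentation, identifying cokernels) is routine; ensuring the indexing matches
the paper's asymmetric conventions for left versus right modules is where care
is required, and I would double-check it against the identifications used in
Theorem~\ref{thm:(1.3),2report,p3} and in the definition of $D(M)$.
```
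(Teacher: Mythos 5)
Your proposal is correct and takes essentially the same route as the paper: apply ${\rm Hom}_R(-,R_R)$ to a presentation $R_R^{m}\overset{\lambda_H}{\longrightarrow}R_R\rightarrow M\rightarrow 0$ to obtain ${_RR}\overset{\rho_H}{\longrightarrow}{_RR}^{m}\rightarrow D(M)\rightarrow 0$, so that $D(M)\cong L_H$ is $(m,1)$-presented and therefore $(1,m)$-pure-projective, hence $(n,m)$-pure-projective for every $n\in\mathbb{Z}^{+}$. Only two bookkeeping slips need correcting when you write this up: in the paper's conventions $H$ is a $1\times m$ (not $m\times 1$) matrix, and your parenthetical claim that ``every $L_H$-pure exact sequence is automatically $(n,m)$-pure'' is stated backwards --- the implication you actually need (and then correctly use in the ``more directly'' sentence) is that every $(n,m)$-pure sequence is $L_H$-pure, which holds because a $1$-generated submodule of ${_RR}^{m}$ is also $n$-generated (pad $H$ with zero rows), so $L_H$ lies in $L_{\mathcal H}$ for $\mathcal H=M_{n\times m}(R)$.
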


\begin{proof}   Applying Hom$_R(-,R_R)$   to a presentation  $R_{R}^{m}\overset{\lambda_H}{\longrightarrow}R_{R}^{1}\rightarrow M\rightarrow0$ of $M$ gives the presentation $_RR^{1}\overset{\rho_H}{\longrightarrow}_RR^{m}\rightarrow D(M)\rightarrow0$ of $D(M)$.   Thus $D(M)$ is $(m,1)$-presented hence $(1,m)$-pure-projective, hence $(n,m)$-pure-projective for all $n\geq 1$.

 \end{proof}

\begin{prop}\label{prop:3.12(3.12(3)),2report,p.19} Let $m\in \mathbb Z^{+}$.
Suppose that  one of the following two conditions is satisfied.

$\left(1\right)$ The  ring $R$ is   semiperfect  and   $I$ is  a nonzero ideal
 with $\textup{gen}(I_R)=m+1$ and  $I\subseteq e_{j}R$
for some local idempotent $e_j$ of R.

$\left(2\right)$ The ring $R$ is   Krull-Schmidt  and  $I$ is a nonzero right ideal
with $\textup{gen}(I)=m+1$ and  $I\subseteq e_{j}R$ for some local idempotent $e_j$ of R.

\noindent Then $D(e_{j}R/I)$ is not an $L_{\mathcal H}$-pure-projective left
R-module, where $\mathcal H$=$\bigcup$$\{$$\underset{s\times t}{M}(R)\mid s,t\in \mathbb Z^{+}$
with t<m+1$\}.$

\end{prop}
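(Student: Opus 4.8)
The plan is to recognise $D(e_jR/I)$ as a module of the form $L_H$ to which Proposition~\ref{prop:3.2(3.2),2report,p.16} applies, and then to check that the set $\mathcal H$ of the statement is exactly the set of matrices excluded there. Write $M=e_jR/I$. Applying Lemma~\ref{lem:3.10(3.10),2report,p.18} with $m+1$ in place of $m$ shows that $M$ is a finitely presented right $R$-module with $\textup{gen}(M)=1$, $\textup{rel}(M)=m+1$ and $\textup{End}_R(M)$ local, so $M$ is indecomposable. Since $e_j$ is a local idempotent, $e_jR$ is a local module whose unique maximal submodule is $e_jJ$; as $M\neq 0$ the proper submodule $I$ lies in $e_jJ=\textup{rad}(e_jR)$, so $e_jR\to M$ is a projective cover. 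Were $M$ projective this epimorphism would split and its superfluous kernel $I$ would vanish, contradicting $I\neq 0$; hence $M$ is indecomposable, non-projective, with local endomorphism ring.

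Next I would compute the invariants of $D(M)$ by dualising a minimal projective presentation. Choosing $Q\xrightarrow{\phi}e_jR\to M\to 0$ with $Q$ a projective cover of $I$, we have $\textup{gen}(Q)=\textup{gen}(I)=m+1$ and $\textup{im}(\phi)=I\subseteq\textup{rad}(e_jR)$, so $\phi$ is a radical map. Applying $(-)^{+}=\textup{Hom}_R(-,R)$ yields $(e_jR)^{+}\xrightarrow{\phi^{+}}Q^{+}\to D(M)\to 0$, and over a semiperfect ring the dual $\phi^{+}$ of a radical map is again radical; therefore $Q^{+}\to D(M)$ is a projective cover and $\textup{gen}(D(M))=\textup{gen}(Q^{+})=m+1$, while $\textup{rel}(D(M))=\textup{gen}((e_jR)^{+})=1$ because $(e_jR)^{+}\cong Re_j$ is indecomposable projective. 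This presents $D(M)$ as a module $L_H$ with $\textup{gen}(L_H)=m+1$ and $\textup{rel}(L_H)=1$. Finally, by the duality properties of the Auslander--Bridger dual (\cite{War75}) the module $D(M)$ is again indecomposable and non-projective with $\textup{End}_R(D(M))$ local; in case $(2)$ this last point is immediate, since $D(M)$ is then an indecomposable finitely presented module over a Krull--Schmidt ring.

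I would then apply Proposition~\ref{prop:3.2(3.2),2report,p.16} to this $H$. The set of matrices appearing there is $\{M_{r\times q}(R)\mid q<\textup{gen}(L_H)\text{ or }r+q<\textup{rel}(L_H)\}$; since $\textup{gen}(L_H)=m+1$ and $\textup{rel}(L_H)=1$, the condition $r+q<1$ is never met and this set collapses to $\{M_{r\times q}(R)\mid q<m+1\}$, which is precisely the $\mathcal H$ of the statement. By Proposition~\ref{prop:3.2(3.2),2report,p.16}, $L_H$ is not $L_{\mathcal G}$-pure-projective for any $\mathcal G\subseteq\{M_{r\times q}(R)\mid q<m+1\}$; taking $\mathcal G=\mathcal H$ shows that $D(e_jR/I)=L_H$ is not $L_{\mathcal H}$-pure-projective, as claimed.

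The main obstacle is the determination of the invariants of $D(M)$, and in case $(1)$, where $R$ is only assumed semiperfect, the assertion that $\textup{End}_R(D(M))$ is local. The crucial numerical input is the sharp lower bound $\textup{gen}(D(M))\geq m+1$: it is exactly this that yields $\mathcal H\subseteq\{M_{r\times q}(R)\mid q<\textup{gen}(L_H)\}$ and hence lets Proposition~\ref{prop:3.2(3.2),2report,p.16} bite, and it rests on the fact that a minimal projective presentation dualises to a minimal one (radical maps dualising to radical maps). The local-endomorphism-ring claim for $D(M)$ in the semiperfect case cannot be obtained by the lifting argument of Lemma~\ref{lem:3.10(3.10),2report,p.18}, because the projective cover $Q^{+}$ of $D(M)$ is in general a direct sum of several indecomposable projectives rather than a single local module; it has instead to be extracted from the transpose duality, which preserves indecomposability and the local-endomorphism-ring property for modules without projective summands.
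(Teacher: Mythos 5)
Your proposal is correct and takes essentially the same route as the paper's proof: Lemma~\ref{lem:3.10(3.10),2report,p.18} for the invariants of $e_{j}R/I$, the Warfield transpose duality \cite{War75} to get that $D(e_{j}R/I)$ is non-projective with local endomorphism ring and that $\textup{gen}(D(e_{j}R/I))=m+1$, $\textup{rel}(D(e_{j}R/I))=1$, and finally an application of Proposition~\ref{prop:3.2(3.2),2report,p.16}. The only difference is that you spell out what the paper compresses into ``it follows easily'' --- the projective-cover argument for non-projectivity, the minimality of the dualized presentation via radical maps, and the identification of the statement's $\mathcal H$ with the excluded set of matrices --- which is a matter of detail, not of method.
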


\begin{proof}
 By Lemma~\ref{lem:3.10(3.10),2report,p.18}, ${\rm End}_{R}(e_{j}R/I)$ is a local ring and hence ${\rm End}_{R}(D(e_{j}R/I))$
is  local  \cite[Theorem 2.4, p.196]{War75}.  Since {\rm gen}$(e_{j}R/I)$=$1$ and {\rm rel}$(e_{j}R/I)$= $m+1$ $($by Lemma~\ref{lem:3.10(3.10),2report,p.18}$),$  it follows easily that  {\rm gen}$(D(e_{j}R/I))$=$m+1$
and {\rm rel}$(D(e_{j}R/I))$=$1.$ Since $e_jR/I$ is not projective, neither is $D(e_{j}R/I)$ so, by Proposition~\ref{prop:3.2(3.2),2report,p.16},  $D(e_{j}R/I)$ is not $L_{\mathcal H}$-pure-projective. \end{proof}

The following theorem is a generalization of   \cite[Theorem 3.5(1), p.3888]{AlkaCo11}.

\begin{thm}\label{thm:3.13(3.13),2report,p.20} Let $(n,m)$ and $(r,s)$
be any two  pairs of positive integers such that  $n\neq r$. Suppose that  one of the following two conditions is satisfied:

$\left(a\right)$  $R$ is   semiperfect  and   there exists an  ideal
I of R with  $\textup{gen}(I_R)=$$\textup{max}\{n,r\}$  and $I\subseteq e_{j}R$
for some local idempotent $e_{j}$

$\left(b\right)$ $R$ is   Krull-Schmidt  and   there exists a right ideal
I of R with  $\textup{gen}(I)=$$\textup{max}\{n,r\}$ and $I\subseteq e_{j}R$
for some local idempotent $e_{j}$.

\noindent Then:

 $\left(1\right)$ $(m,n)$-purity and $(s,r)$-purity
of short exact sequences of left R-modules are not equivalent;

$\left(2\right)$ $(n,m)$-purity and $(r,s)$-purity
of short exact sequences of right R-modules are not equivalent.\end{thm}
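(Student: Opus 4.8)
The plan is to reduce Theorem~\ref{thm:3.13(3.13),2report,p.20} to the machinery already assembled for $L_{\mathcal H}$-purity, exhibiting a concrete module that is pure-projective for one of the two purities but not for the other. By the symmetry statement in Corollary~\ref{cor:2.3(2.15(b)),p.3}, part $(2)$ follows from part $(1)$ (the right-module statements are exactly the dual assertions), so I would concentrate entirely on proving $(1)$: that $(m,n)$-purity and $(s,r)$-purity of short exact sequences of left $R$-modules are not equivalent. Without loss of generality assume $n > r$ (the two cases are symmetric in the roles of the pairs), so that $\textup{max}\{n,r\} = n$; the hypothesis then gives an ideal (or right ideal) $I \subseteq e_j R$ with $\textup{gen}(I) = n$.

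First I would set $M = e_j R / I$. By Lemma~\ref{lem:3.10(3.10),2report,p.18}, under either hypothesis $(a)$ or $(b)$, $M$ is a finitely presented right $R$-module with $\textup{gen}(M) = 1$, $\textup{rel}(M) = n$, and $\textup{End}_R(M)$ local. Next I would pass to the Auslander-Bridger dual and consider $N = D(M) = D(e_j R/I)$, a left $R$-module. The computation in the proof of Proposition~\ref{prop:3.12(3.12(3)),2report,p.19} shows that $\textup{gen}(N) = n$ and $\textup{rel}(N) = 1$, that $\textup{End}_R(N)$ is local (via \cite[Theorem 2.4, p.196]{War75}), and that $N$ is not projective. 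This $N$ is the separating module.

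The heart of the argument is then to show $N$ is $(s,r)$-pure-projective but not $(m,n)$-pure-projective. For the positive half, I would invoke Lemma~\ref{lem:3.12(3.12),2report,p.19}: since $M$ is $(1,n)$-presented, $N = D(M)$ is $(n,1)$-presented, hence $(1,n)$-pure-projective, hence $(t,n)$-pure-projective for every $t$. But I actually need $(s,r)$-pure-projectivity, and here $r < n$, so I must argue that being $(n,1)$-presented makes $N$ pure-projective for the coarser $(s,r)$-purity as well. Since $r < n$ means every $(s,r)$-presented module is $(s,n)$-presented (more relations allowed), every $(s,n)$-pure sequence is $(s,r)$-pure by the observation preceding Corollary~\ref{cor:2.3(2.15(b)),p.3}, hence $N$, being projective relative to $(s,n)$-pure sequences, is projective relative to $(s,r)$-pure sequences. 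For the negative half, I would apply Proposition~\ref{prop:3.2(3.2),2report,p.16} with $H$ a presenting matrix for $N$: since $\textup{End}_R(N)$ is local and $N$ is not projective, $N$ fails to be $L_{\mathcal H}$-pure-projective for $\mathcal H = \bigcup\{M_{r'\times q}(R) \mid q < \textup{gen}(N)\text{ or } r'+q < \textup{rel}(N)\}$, and I would check that the matrix class defining $(m,n)$-purity is contained in this $\mathcal H$ because $n = \textup{gen}(N) > m$-columns-condition forces the relevant inequality.

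The main obstacle I anticipate is the bookkeeping in this last step: matching the exact index conventions of Proposition~\ref{prop:3.2(3.2),2report,p.16} and Proposition~\ref{prop:3.12(3.12(3)),2report,p.19} (which are stated for $\textup{gen} = m+1$, $\textup{rel} = 1$) against the present normalization $\textup{gen}(N) = n$, $\textup{rel}(N) = 1$, and verifying that the matrix size $(m,n)$ governing the finer purity indeed lands inside the excluded class $\mathcal H$. Concretely, $(m,n)$-purity for left modules corresponds to $L_{\mathcal H_0}$-purity with $\mathcal H_0 = M_{m\times n}(R)$; I must confirm that an $m \times n$ matrix over $R$ satisfies the defining inequality of the $\mathcal H$ in Proposition~\ref{prop:3.2(3.2),2report,p.16}, which amounts to checking $n < \textup{gen}(N)$ or $m + n < \textup{rel}(N)$ fails in the wrong direction and instead the complementary containment holds. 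Once the index arithmetic is pinned down — essentially that $\textup{gen}(N) = n$ exceeds what the $(s,r)$-side can produce as a summand, exactly as in Proposition~\ref{prop:3.12(3.12(3)),2report,p.19} — the conclusion that the two purities differ is immediate from the definition of $S$-pure-projectivity, and part $(2)$ drops out by the right/left symmetry built into Theorem~\ref{thm:2.1(2.15),(2.17),cor(2.18),cor(1.6),2report,p.14-16,p.5}.
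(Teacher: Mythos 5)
Your overall strategy is the paper's: use $D(e_{j}R/I)$ as the separating module, show it is pure-projective for one of the two purities but not the other, and deduce part $(2)$ from part $(1)$ via Corollary~\ref{cor:2.3(2.15(b)),p.3}. However, your normalization $n>r$ is incompatible with the conclusions you claim, and both halves of your key step fail as written. With $n>r$ you get $\textup{gen}(I)=n$, hence $\textup{gen}(D(e_{j}R/I))=n$ and $\textup{rel}(D(e_{j}R/I))=1$; so $D(e_{j}R/I)$ is $(n,1)$-presented, hence $(1,n)$-pure-projective, hence $(m,n)$-pure-projective --- it \emph{is} pure-projective for exactly the purity you want it to fail for. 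Your positive half rests on a monotonicity implication that runs backwards: since $r<n$, every $(s,n)$-pure sequence is $(s,r)$-pure, so the $(s,r)$-pure sequences form the \emph{larger} class; projectivity relative to all $(s,n)$-pure sequences therefore does not yield projectivity relative to all $(s,r)$-pure sequences (the valid implication is the converse one, $(s,r)$-pure-projective $\Rightarrow$ $(s,n)$-pure-projective). Your negative half fails on precisely the index check you flagged as the anticipated obstacle: Proposition~\ref{prop:3.2(3.2),2report,p.16} applied to $N=D(e_{j}R/I)$ excludes $L_{\mathcal G}$-pure-projectivity only for $\mathcal G$ contained in $\bigl\{K\in M_{r'\times q}(R)\mid q<\textup{gen}(N)=n \text{ or } r'+q<\textup{rel}(N)=1\bigr\}$, i.e.\ matrices with strictly fewer than $n$ columns (the relation condition being vacuous); but $M_{m\times n}(R)$ consists of matrices with exactly $n$ columns, so the required containment is false --- consistently with the fact, noted above, that $N$ is $(m,n)$-pure-projective.

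The repair is simply to flip the normalization, as the paper does: take $n<r$, so $\textup{gen}(I)=r$; then $D(e_{j}R/I)$ is $(r,1)$-presented, hence $(s,r)$-pure-projective by Lemma~\ref{lem:3.12(3.12),2report,p.19}, and it is not $(m,n)$-pure-projective by Proposition~\ref{prop:3.12(3.12(3)),2report,p.19}, because $n<r$ puts $M_{m\times n}(R)$ inside the excluded class of matrices with fewer than $r$ columns. (Equivalently, keep $n>r$ but swap the two conclusions: $D(e_{j}R/I)$ is then $(m,n)$-pure-projective and not $(s,r)$-pure-projective, which still separates the two purities.) With that correction, the rest of your outline --- including the transfer to right modules by the left/right symmetry of Corollary~\ref{cor:2.3(2.15(b)),p.3} --- coincides with the paper's proof.
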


\begin{proof}   $\left(1\right)$    Without loss of generality, we can assume  that $n<r$. By
 Lemma~\ref{lem:3.12(3.12),2report,p.19} and  Proposition~\ref{prop:3.12(3.12(3)),2report,p.19}, $D(e_{j}R/I)$ is $(s,r)$-pure-projective
 and  not $(m,n)$-pure-projective. Thus $(m,n)$-pure-projectivity and $(s,r)$-pure-projectivity of left $R$-modules are not equivalent and hence by  Corollary~\ref{cor:2.3(2.15(b)),p.3},  $(m,n)$-purity and $(s,r)$-purity
for left $R$-modules are not equivalent.

$\left(2\right)$ By $\left(1\right)$ and Corollary~\ref{cor:2.3(2.15(b)),p.3}.
\end{proof}

\begin{cor}\label{cor:3.14(3.14),blue,p.20(a)} Let $R$ be a local ring, let $I$ be a finitely
generated ideal  of R and set  $\textup{gen}(I_R)=r$, then for all $n<r$ and for all m,s:

 $\left(1\right)$ $(m,n)$-purity and $(s,r)$-purity
for left R-modules are not equivalent.

$\left(2\right)$  $(n,m)$-purity and $(r,s)$-purity
for right R-modules are not equivalent. \end{cor}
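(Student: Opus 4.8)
The plan is to derive this corollary directly from Theorem~\ref{thm:3.13(3.13),2report,p.20} by checking that a local ring satisfies the hypotheses of that theorem, with $\max\{n,r\}=r$. First I would recall that every local ring is in particular semiperfect: if $R$ is local then $R/J$ is a division ring (hence simple, so semisimple) and idempotents trivially lift modulo $J$ since the only idempotents are $0$ and $1$. So condition $(a)$ of Theorem~\ref{thm:3.13(3.13),2report,p.20} is the one I aim to verify. The identity element $e_j=1$ is a local idempotent because $e_jRe_j=R$ is local by assumption, and $e_jR=R$, so the containment $I\subseteq e_jR$ is automatic for every ideal $I$.

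The remaining point is the numerical one. We are given a finitely generated ideal $I$ with $\mathrm{gen}(I_R)=r$, and we are assuming $n<r$, so that $\max\{n,r\}=r=\mathrm{gen}(I_R)$. This is exactly the hypothesis $\mathrm{gen}(I_R)=\max\{n,r\}$ required in part $(a)$ of Theorem~\ref{thm:3.13(3.13),2report,p.20}. Since $n\neq r$ (indeed $n<r$) the pair $(n,m)$ and $(r,s)$ are two distinct pairs of positive integers in the sense required by the theorem. I would therefore simply invoke Theorem~\ref{thm:3.13(3.13),2report,p.20}: its conclusion $(1)$ gives that $(m,n)$-purity and $(s,r)$-purity for left $R$-modules are not equivalent, which is statement $(1)$ here, and its conclusion $(2)$ gives that $(n,m)$-purity and $(r,s)$-purity for right $R$-modules are not equivalent, which is statement $(2)$.

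There is essentially no obstacle here; the corollary is a specialisation of the theorem to the case $R$ local, $e_j=1$. The only thing worth stating explicitly is that a local ring is semiperfect with $1$ as a local idempotent, and that the hypothesis $n<r$ forces $\max\{n,r\}=r$ so that the given value $\mathrm{gen}(I_R)=r$ matches the theorem's requirement. Once these trivial verifications are recorded, both parts follow immediately, for all choices of $m$ and $s$ as claimed.
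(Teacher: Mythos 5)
Your proposal is correct and follows exactly the paper's own argument: the paper likewise observes that a local ring is semiperfect with $1$ as a local idempotent and then invokes Theorem~\ref{thm:3.13(3.13),2report,p.20}. You simply spell out the routine verifications (that $R/J$ is a division ring, that $I\subseteq 1\cdot R$ is automatic, and that $n<r$ gives $\max\{n,r\}=r=\textup{gen}(I_R)$) which the paper leaves implicit.
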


\begin{proof}
Since  $R$ is local it is a semiperfect and 1 is a local idempotent. By Theorem~\ref{thm:3.13(3.13),2report,p.20}, the
result holds.\end{proof}

Let $M$ be a finitely generated left module over a semiperfect
ring $R.$ Warfield in \cite{War75} defined Gen$(M)$ to be the number
of summands in a decomposition of $M/JM$ as a direct sum of simple
modules where $J=J(R).$  If $M$ is a finitely presented left module over a semiperfect ring
$R,$ and $f:P\rightarrow M$ a projective cover, with $K=$ker$(f),$
then Warfield  defined Rel$(M)$ by Rel$(M)$=Gen$(K).$ If $M$ is a left $R$-module
and $x\in M$, we say $x$ is a local element if $Rx$ is a local module. The  number of elements in any minimal generating  set of
local elements of $M$ is exactly Gen$(M)$  \cite[Lemma 1.11]{War75}. One may use these to obtain similar results, for example the following.

\begin{prop}\label{prop:3.20(3.27),blue,p.12}
Let H be a matrix over a semiperfect ring R such that $L_ H$ is not projective and \textup{End}$_{R}(L_{H})$ is a local ring and let $\mathcal H$=$\{K\mid K$ is a matrix with \textup{Gen}$(L_{H})>$ \textup{Gen}$(L_{K})$ or   \textup{Rel} $(L_{H})>$\textup{Rel} $(L_{K})\}.$
Then $L_{H}$ is not
$L_{\mathcal H}$-pure-projective.\end{prop}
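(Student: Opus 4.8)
The plan is to mirror the structure of the proof of Proposition~\ref{prop:3.2(3.2),2report,p.16}, using the Warfield invariants $\textup{Gen}$ and $\textup{Rel}$ in place of the ordinary $\textup{gen}$ and $\textup{rel}$. First I would invoke Proposition~\ref{cor:1.18(2.5),2report,p.11}: if $L_H$ were $L_{\mathcal H}$-pure-projective then $L_H\in\textup{Add}(L_{\mathcal H}\cup\{_RR\})$, so $L_H$ would be a direct summand of a direct sum $\bigoplus_{i\in I}F_i$ with each $F_i\in L_{\mathcal H}\cup\{_RR\}$. Since $\textup{End}_R(L_H)$ is local, $L_H$ is indecomposable with the finite exchange property, so exactly as in Proposition~\ref{thm:2.7(3.20),blue,p.5}(a) it is in fact a direct summand of a single $F_j$; that is, either $L_H$ is projective (excluded by hypothesis) or $L_H$ is a direct summand of some $L_K$ with $K\in\mathcal H$.

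The crux is then to derive a contradiction from $L_H$ being a summand of $L_K$ with $K\in\mathcal H$. For this I would need the analogue of Remark~\ref{rem:3.1(3.1),2report,p.16} for the Warfield invariants, namely that a direct summand $N$ of a finitely presented module $M$ over a semiperfect ring satisfies $\textup{Gen}(N)\le\textup{Gen}(M)$ and $\textup{Rel}(N)\le\textup{Rel}(M)$ (here one can hope for the cleaner inequality $\textup{Rel}(N)\le\textup{Rel}(M)$ rather than the $+\,\textup{gen}(M)$ slack in the ordinary case, because over a semiperfect ring projective covers exist and the kernel of a projective cover behaves additively under direct sums). Granting this, from $L_H\mid L_K$ I would get $\textup{Gen}(L_H)\le\textup{Gen}(L_K)$ and $\textup{Rel}(L_H)\le\textup{Rel}(L_K)$, which directly contradicts the defining condition $K\in\mathcal H$ that $\textup{Gen}(L_H)>\textup{Gen}(L_K)$ or $\textup{Rel}(L_H)>\textup{Rel}(L_K)$.

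The main obstacle I anticipate is establishing the monotonicity of $\textup{Gen}$ and $\textup{Rel}$ under passage to direct summands. The inequality for $\textup{Gen}$ is routine, since $N\mid M$ gives $N/JN\mid M/JM$ as semisimple modules, whence the number of simple summands does not increase. The $\textup{Rel}$ inequality is more delicate: one must compare the projective covers of $N$ and $M$ and argue that if $M\cong N\oplus N'$ then a projective cover of $M$ splits compatibly as the direct sum of projective covers of $N$ and $N'$ (projective covers exist and are unique over a semiperfect ring), so that the kernel splits as well and $\textup{Gen}$ of the kernel of the cover of $N$ is bounded by $\textup{Gen}$ of the kernel of the cover of $M$. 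Once this splitting of projective covers is in hand the bound $\textup{Rel}(N)\le\textup{Rel}(M)$ follows from the corresponding $\textup{Gen}$ bound applied to the kernels. I would isolate this monotonicity as the key lemma (it is the semiperfect-ring counterpart of Remark~\ref{rem:3.1(3.1),2report,p.16}) and then the proposition drops out immediately by the exchange-property argument above.
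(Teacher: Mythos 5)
Your proposal is correct and follows essentially the same route as the paper: assume $L_H$ is $L_{\mathcal H}$-pure-projective, use Proposition~\ref{cor:1.18(2.5),2report,p.11} together with the local-endomorphism/exchange argument of Proposition~\ref{thm:2.7(3.20),blue,p.5} to realize $L_H$ as a direct summand of a single $L_K$ with $K\in\mathcal H$ (the projective case being excluded by hypothesis), and then contradict the definition of $\mathcal H$ via monotonicity of \textup{Gen} and \textup{Rel} under direct summands. The only difference is that the paper simply cites Warfield \cite[Lemma 1.10, p.192]{War75} for the inequalities $\textup{Gen}(L_H)\leq\textup{Gen}(L_K)$ and $\textup{Rel}(L_H)\leq\textup{Rel}(L_K)$, whereas you supply a (correct) proof of this lemma via additivity of $\textup{Gen}$ and the splitting of projective covers over a semiperfect ring.
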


\begin{proof}
Assume that $L_{H}$ is $L_{\mathcal H}$-pure-projective, thus by   Proposition~\ref{cor:1.18(2.5),2report,p.11},  $L_{H}\in$ add$(L_{\mathcal H}\cup\{_{R}R\}).$ Since End$_{R}(L_H)$ is a local ring, $L_H$ is as in   Proposition~\ref{thm:2.7(3.20),blue,p.5}, a direct summand of a module in $L_{\mathcal H}\cup\{_{R}R\}.$
Thus either $L_H$ is a direct summand of $L_{D},$
where $D\in\mathcal{H}$ or $L_H$ is a direct
summand of $_{R}R$.  Since $L_H$ is not projective,  $L_H$ is a direct summand of $L_{D},$
thus by \cite[Lemma 1.10, p.192]{War75}, Gen$(L_H)\leq$Gen$(L_{D})$
and Rel$(L_H)\leq$Rel$(L_{D})$ and this contradicts
${D}\in\mathcal{H}$.  Therefore,
$L_{H}$ is not $L_{\mathcal{H}}$-pure-projective. \end{proof}

\begin{rem*}\label{cor:3.21(3.28),blue,p.13}
Since, if $K$ is an $r\times q$ matrix, we have \textup{Gen}$(R).q\geq$\textup{Gen}$(R).$\textup{gen}$($ $L_K)\geq \textup{Gen}(L_K)$ and similarly for relations, if $H$ is as in Proposition~\ref{prop:3.20(3.27),blue,p.12} then $L_H$ is not  $L_{{\mathcal H}_{i}}$-pure-projective  for any of the sets of matrices:

${\mathcal H}_{1}$=$\{K\mid$ \textup{Gen}$(L_{H})>$\textup{Gen}$(R)$ \textup{gen}$(L_{K})$ or \textup{Rel}$(L_{H})>$\textup{Gen}$(R)$ \textup{rel}$(L_{K})\};$

${\mathcal H}_{2}$=$\{\underset{r\times q}{K}\mid r,q\in \mathbb Z^{+}$such that
\textup{Gen}$(L_{H})>$$q$\textup{Gen}$(R)$ or \textup{Rel}$(L_{H})>$\textup{Rel}$(L_{K})\};$

${\mathcal H}_{3}$ =$\bigcup\{\underset{r\times q}{M}(R)\mid r,q\in \mathbb Z^{+}$such that
\textup{Gen}$(L_{H})>$$q$\textup{Gen}$(R)$ or \textup{Rel}$(L_{H})>$$r$\textup{Gen}$(R)\}.$
\end{rem*}

\section{Purity over finite-dimensional algebras}

\subparagraph*{\textmd{In this section we assume some knowledge of the representation theory of finite-dimensional algebras, for which see   \cite{AsSiSk06}, \cite{AuReSm95} for example. Let $R$ be a Krull-Schmidt ring and let $M$ be any finitely presented left $R$-module. We will use ind$(M)$ to denote the class of $($isomorphism types of$)$ indecomposable direct summands of $M$. If $S$ is a class of finitely presented left $R$-modules, we  define ind$(S)=\underset{M\in S}{\bigcup}$ind$(M).$ }}

\begin{prop}\label{prop:4.1(4.12)(a),p.7(a)}
 Let R be a Krull-Schmidt ring
and let S be a class of finitely presented left R-modules. Then the
following statements are equivalent for a left R-module M:

$\left(1\right)$ M is S-pure-projective.

$\left(2\right)$ M is  \textup{ind}$(S)$-pure-projective.

$\left(3\right)$ M isomorphic to a direct sum of modules in \textup{ind}$(S\cup\{_{R}R\}).$ \end{prop}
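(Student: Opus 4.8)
The plan is to prove the cycle $(1)\Rightarrow(3)\Rightarrow(2)\Rightarrow(1)$, leaning on Proposition~\ref{cor:1.18(2.5),2report,p.11} (which identifies the $S$-pure-projectives with $\textup{Add}(S\cup\{_{R}R\})$) and on the Krull-Schmidt hypothesis. The key structural input is that over a Krull-Schmidt ring the finitely presented modules decompose into indecomposables with local endomorphism rings, so that the Krull-Schmidt/Azumaya uniqueness theorem and the associated exchange-property machinery (as already invoked in Proposition~\ref{thm:2.7(3.20),blue,p.5}) are available.

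For $(1)\Rightarrow(3)$, suppose $M$ is $S$-pure-projective. By Proposition~\ref{cor:1.18(2.5),2report,p.11}(2), $M\in\textup{Add}(S\cup\{_{R}R\})$, so $M$ is a direct summand of some direct sum $\bigoplus_{i\in I}F_i$ with each $F_i\in S\cup\{_{R}R\}$. Since $R$ is Krull-Schmidt, each such $F_i$ is a finite direct sum of indecomposable modules with local endomorphism rings, all of which lie in $\textup{ind}(S\cup\{_{R}R\})$; hence $\bigoplus_{i\in I}F_i$ is itself a direct sum of modules from $\textup{ind}(S\cup\{_{R}R\})$, each having local endomorphism ring. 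A direct summand of a direct sum of modules with local endomorphism rings is again such a direct sum, with indecomposable summands drawn from the same list, by the Krull-Schmidt-Azumaya theorem for infinite decompositions $($see, e.g., the relevant result in \cite{Fac98}$)$. Therefore $M$ is isomorphic to a direct sum of modules in $\textup{ind}(S\cup\{_{R}R\})$, giving $(3)$.

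The implications $(3)\Rightarrow(2)$ and $(2)\Rightarrow(1)$ are comparatively routine. For $(3)\Rightarrow(2)$: if $M$ is a direct sum of modules in $\textup{ind}(S\cup\{_{R}R\})$ then $M\in\textup{Add}(\textup{ind}(S)\cup\{_{R}R\})$, so by Proposition~\ref{cor:1.18(2.5),2report,p.11}(2) applied to the class $\textup{ind}(S)$, $M$ is $\textup{ind}(S)$-pure-projective. For $(2)\Rightarrow(1)$: every module in $\textup{ind}(S)$ is a direct summand of a module in $S$, so any $S$-pure exact sequence is automatically $\textup{ind}(S)$-pure $($applying $\textup{Hom}_R(N,-)$ for an indecomposable summand $N$ of $M\in S$ is a retract of applying $\textup{Hom}_R(M,-))$; consequently an $\textup{ind}(S)$-pure-projective module is projective relative to every $S$-pure sequence, i.e.\ $S$-pure-projective. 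Equivalently one notes $\textup{Add}(\textup{ind}(S)\cup\{_{R}R\})=\textup{Add}(S\cup\{_{R}R\})$ and appeals directly to Proposition~\ref{cor:1.18(2.5),2report,p.11}(2) for both classes.

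The main obstacle is the infinite-summand bookkeeping in $(1)\Rightarrow(3)$: passing from ``direct summand of a direct sum of indecomposables with local endomorphism rings'' to ``direct sum of such indecomposables'' requires the Krull-Schmidt-Azumaya theorem in its form for arbitrary (not merely finite) direct sums, together with the fact that local endomorphism rings guarantee the exchange behaviour needed to split off summands. This is exactly where the Krull-Schmidt hypothesis on $R$ is essential and cannot be replaced by finite-decomposition arguments alone; everything else reduces to the already-established description of $S$-pure-projectives as $\textup{Add}(S\cup\{_{R}R\})$.
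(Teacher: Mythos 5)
Your proof is correct and follows essentially the same route as the paper: the paper's own (one-sentence) proof likewise reduces everything to Proposition~\ref{cor:1.18(2.5),2report,p.11} after observing that, over a Krull-Schmidt ring, each module in $S\cup\{_{R}R\}$ is a direct sum of modules in $\textup{ind}(S\cup\{_{R}R\})$. The Krull-Schmidt--Azumaya step you spell out for $(1)\Rightarrow(3)$ --- passing from ``direct summand of a direct sum of indecomposables with local endomorphism rings'' to ``direct sum of such indecomposables'' --- is precisely the content the paper leaves implicit in its appeal to Proposition~\ref{cor:1.18(2.5),2report,p.11}.
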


\begin{proof}  Since $R$ is a Krull-Schmidt ring, each element in $S\cup\{_{R}R\}$
is a direct sum of modules in ind$(S\cup$ $\{_{R}R\})$ so this follows by  Proposition~\ref{cor:1.18(2.5),2report,p.11}.\end{proof}

The following corollary is immediate from Proposition~\ref{prop:4.1(4.12)(a),p.7(a)} and Theorem~\ref{thm:2.1(2.15),(2.17),cor(2.18),cor(1.6),2report,p.14-16,p.5}.

\begin{cor}\label{cor:4.2(4.13),p.8}
 Let R be a Krull-Schmidt ring and let S
and T be two classes of finitely presented left R-modules. Then
T-purity implies S-purity if and only if  $\textup{ind}(S)\subseteq \textup{ind}(T\cup\{_{R}R\}).$
 \end{cor}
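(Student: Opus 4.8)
The plan is to derive this corollary directly from the two results cited, namely Proposition~\ref{prop:4.1(4.12)(a),p.7(a)} and Theorem~\ref{thm:2.1(2.15),(2.17),cor(2.18),cor(1.6),2report,p.14-16,p.5}, exactly as the remark preceding the statement advertises. Since $R$ is Krull-Schmidt, I first note that the hypotheses of Theorem~\ref{thm:2.1(2.15),(2.17),cor(2.18),cor(1.6),2report,p.14-16,p.5} are available to me (any class of finitely presented modules arises as some $L_{\mathcal G}$), so the equivalence of its condition~$(1)$ ``every $T$-pure short exact sequence is $S$-pure'' with condition~$(3)$ ``every $S$-pure-projective module is $T$-pure-projective'' and with condition~$(4)$ ``$S\subseteq\mathrm{add}(T\cup\{{}_RR\})$'' can be invoked freely. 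The statement I want to prove says precisely that $T$-purity implies $S$-purity (condition~$(1)$) if and only if $\mathrm{ind}(S)\subseteq\mathrm{ind}(T\cup\{{}_RR\})$, so the whole task reduces to translating one of those equivalent conditions into the $\mathrm{ind}$-language.

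The cleanest route is through condition~$(3)$. First I would argue the forward direction: assuming $T$-purity implies $S$-purity, every $S$-pure-projective module is $T$-pure-projective by the theorem. Now take any $N\in\mathrm{ind}(S)$; it is an indecomposable direct summand of some $M\in S$, hence $N$ is $S$-pure-projective by Proposition~\ref{cor:1.18(2.5),2report,p.11}(2), so it is $T$-pure-projective. By Proposition~\ref{prop:4.1(4.12)(a),p.7(a)} (applied to the class $T$), $N$ is then isomorphic to a direct sum of modules in $\mathrm{ind}(T\cup\{{}_RR\})$; since $N$ is indecomposable, $N$ itself lies in $\mathrm{ind}(T\cup\{{}_RR\})$. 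This gives $\mathrm{ind}(S)\subseteq\mathrm{ind}(T\cup\{{}_RR\})$.

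For the converse, suppose $\mathrm{ind}(S)\subseteq\mathrm{ind}(T\cup\{{}_RR\})$. I want to verify condition~$(3)$, that every $S$-pure-projective $M$ is $T$-pure-projective. By Proposition~\ref{prop:4.1(4.12)(a),p.7(a)}, such an $M$ is a direct sum of modules in $\mathrm{ind}(S\cup\{{}_RR\})=\mathrm{ind}(S)\cup\mathrm{ind}({}_RR)$, and by the containment hypothesis every summand in $\mathrm{ind}(S)$ already lies in $\mathrm{ind}(T\cup\{{}_RR\})$, while the summands from $\mathrm{ind}({}_RR)$ lie there trivially. Hence $M$ is a direct sum of modules in $\mathrm{ind}(T\cup\{{}_RR\})$, which by the same proposition (the $(3)\Rightarrow(1)$ direction for the class $T$) makes $M$ a $T$-pure-projective module. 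Invoking the equivalence $(3)\Leftrightarrow(1)$ of Theorem~\ref{thm:2.1(2.15),(2.17),cor(2.18),cor(1.6),2report,p.14-16,p.5} then yields that $T$-purity implies $S$-purity, completing the proof.

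I do not expect a genuine obstacle here, since all the machinery is prepackaged: the only point requiring a moment's care is the bookkeeping step where I use indecomposability of $N$ to collapse ``direct sum of indecomposables in $\mathrm{ind}(T\cup\{{}_RR\})$'' down to membership of $N$ itself in that class — this relies on the Krull-Schmidt uniqueness of decompositions so that an indecomposable summand of such a direct sum must be isomorphic to one of the listed indecomposables. Beyond that, the argument is a routine unpacking of the $\mathrm{ind}$ notation against the two cited results.
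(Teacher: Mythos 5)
Your proof is correct and follows exactly the route the paper intends: the paper states the corollary as ``immediate'' from Proposition~\ref{prop:4.1(4.12)(a),p.7(a)} and Theorem~\ref{thm:2.1(2.15),(2.17),cor(2.18),cor(1.6),2report,p.14-16,p.5}, and your argument is precisely the unpacking of that citation via the equivalence $(1)\Leftrightarrow(3)$ of the theorem. One tiny remark: the ``collapse'' step in your forward direction needs no Krull--Schmidt uniqueness at all, since an indecomposable module isomorphic to a direct sum of nonzero modules forces that sum to have a single term.
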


Let $R=k\tilde{A}_{1}$ be the Kronecker algebra
over an algebraically closed field $k.$  Left $R$-modules may be viewed as representations of the quiver $\underset{\beta}{\overset{\alpha}{_{1}\bullet\leftleftarrows\bullet_{2}}}$. The preinjective  and preprojective indecomposable
finite-dimensional left $R$-modules are up to isomorphism uniquely
determined by their dimension vectors. For $n\in\mathbb{N}$
we will denote by $I_{n}$ (resp. $P_{n}$)
the finite-dimensional indecomposable preinjective (resp. preprojective) left $R$-module with  dimension vector $(n,n+1)$ (resp. $(n+1,n)).$  Also, for $n\in \mathbb{Z}^{+}$ we will use   $R_{\lambda,n}$ to denote the finite-dimensional indecomposable regular left $R$-module with dimension vector $(n,n)$   and parameter  $\lambda\in k\cup\{\infty\}$   where  $R_{\lambda,1}$  is the module $\underset{\lambda}{\overset{1}{k\leftleftarrows k}}$ for $\lambda \in k$ and  $R_{\infty,1}=\underset{1}{\overset{0}{k\leftleftarrows k}}$.

\begin{example}\label{example:4.3((Example 4.14, p.8) and (New version of Proposition 4.32, p.27))}
 Let $R=k\tilde{A_{1}}$ be the Kronecker algebra
over an algebraically closed field $k.$  Let $n,r\in\mathbb{Z}^{+}$ and
let $S_{1}=\{P_{i}\mid i\leq n\},$ $S_{2}=\{I_{i}\mid i\leq n-1\},$
$S_{3}=\{R_{\lambda,i}\mid i\leq n$ and $\lambda\in k\cup\{\infty\}\}$
and $S_{4}=\{R_{\lambda,1}\mid\lambda\in k\cup\{\infty\}\}\cup\{P_{0},P_{1}\}.$
Then:

$\left(i\right)$ $S_{1}\cup S_{2}\cup S_{3}$-purity$=(\aleph_{0},n)$-purity$=(2n+1,n)$-purity,
for short exact sequences of left $R$-modules.

$\left(ii\right)$ $S_{4}$-purity$=(1,1)$-purity for  left R-modules.

$\left(iii\right)$  $\,$  $(1,1)$-purity is not equivalent to $(\aleph_{0},n)$-purity
for  left $R$-modules.

$\left(iv\right)$  $($n,1$)$-purity is not equivalent to $($r,2$)$-purity for  left $R$-modules.

 \end{example}

\begin{proof}
$\left(i\right)$ Let $\mathcal H =\underset{m\in\mathbb{Z}^{+}}{\bigcup}\underset{m\times n}{M}(R)$.  It follows directly from the description of the finite-dimensional indecomposable modules and Remark~\ref{rem:3.1(3.1),2report,p.16} that   ind$(L_{\mathcal H})=S_1\cup S_2\cup S_3$.  Thus, by Proposition~\ref{prop:4.1(4.12)(a),p.7(a)} we have that  $S_1\cup S_2\cup S_3$-purity=$L_{\mathcal H}$-purity=$(\aleph_{0},n)$-purity for left $R$-modules.

Let $M\in S_{1}\cup S_{2}\cup S_{3}.$  It can be  checked that, if $M\in S_1$ then rel$(M)\leq 2n-1$, if $M\in S_2$ then rel$(M)\leq 2n+1$ and if $M\in S_3$ then rel$(M)\leq 2n$ and hence rel$(M)\leq 2n+1$ in all cases. Since gen$(M)\leq n,$ each module in  $S_{1}\cup S_{2}\cup S_{3}$ is $(n,2n+1)$-presented. Thus  $(2n+1,n)$-purity=$S_{1}\cup S_{2}\cup S_{3}$-purity$=(\aleph_{0},n)$-purity.

$\left(ii\right)$ Let $\lambda\in k\cup \{\infty\}$   and let $M\in R_{\lambda,1}\oplus P_0$.  Since the sequence $_{R}R\overset{(\alpha+\lambda\beta)\times-}{\longrightarrow}_{R}R\rightarrow M\rightarrow0$  is exact, $M$ is $(1,1)$-presented  and hence $R_{\lambda,1}$  is a direct summand of a $(1,1)$-presented module. Thus every module in $S_4$ is a direct summand of a $(1,1)$-presented module.  Conversely, let  $N$ be any indecomposable direct summand of a $(1,1)$-presented left $R$-module, thus gen$(N)=1$  and rel$(N)\leq 2$ $($by Remark~\ref{rem:3.1(3.1),2report,p.16}$)$ and hence either $N=P_0$ or $N=P_1$   or $N=R_{\lambda,1}$  for some $\lambda\in k\cup \{\infty\}$. Thus $N$ is a direct summand of a module in $S_4\cup \{_RR\}$. By Proposition~\ref{thm:2.7(3.20),blue,p.5},   $S_{4}$-purity$=(1,1)$-purity.

$\left(iii\right)$  Assume that $(1,1)$-purity=$(\aleph_{0},n)$-purity for some $n\in\mathbb{Z}^{+}$. Thus, by $\left(i\right)$ and $\left(ii\right)$ above we have that $S_{4}$-purity=$S_{1}\cup S_{2}\cup S_{3}$-purity. This contradicts  Corollary~\ref{cor:4.2(4.13),p.8}, because $I_0 \in S_{1}\cup S_{2}\cup S_{3}$ and $I_0 \notin S_4$.

$\left(iv\right)$ Note that  $R_{R}=e_{1}R \oplus e_{2}R$, where $e_1R$ $($resp. $e_2R$$)$ is the preprojective right $R$-module of dimension vector $(0,1)$ $($resp. $(1,2)$$)$.  Let $I_R=J(e_2R)$, since $I_R=\alpha R\oplus\beta R$   it follows that gen$(I_R)=2.$  By Theorem~\ref{thm:3.13(3.13),2report,p.20} we have that $(n,1)$-purity and $(r,2)$-purity for left $R$-modules are not equivalent.

\end{proof}

\begin{prop}\label{prop:4.4(new)}  Let $R$ be a finite-dimensional algebra over a field $k$. If $R$ is  not of finite representation type, then for every $r\in \mathbb{Z}^{+}$, there is $n>r$ such that  $(\aleph_{0},n)$-purity$\neq (\aleph_{0},r)$-purity for  left R-modules.
\end{prop}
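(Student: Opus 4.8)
The plan is to show that if $R$ is not of finite representation type then the tower of purities $(\aleph_0,n)$-purity for $n=1,2,3,\dots$ cannot stabilize, so that for every $r$ there must be some $n>r$ giving a strictly finer purity. First I would translate the statement into the language of pure-projectives. By the discussion following Corollary~\ref{cor:2.3(2.15(b)),p.3}, for $n\geq s$ every $(\aleph_0,n)$-pure exact sequence is automatically $(\aleph_0,s)$-pure, so the purities get (weakly) finer as $n$ increases. Using Proposition~\ref{prop:4.1(4.12)(a),p.7(a)} and Corollary~\ref{cor:2.3(2.15(b)),p.3}, the equality $(\aleph_0,n)$-purity$=(\aleph_0,r)$-purity is equivalent to a coincidence of the associated classes of pure-projectives, which in the Krull--Schmidt setting (a finite-dimensional algebra is Krull--Schmidt) amounts to $\operatorname{ind}(L_{\mathcal H_n})=\operatorname{ind}(L_{\mathcal H_r})$ where $\mathcal H_n=\bigcup_{m}M_{m\times n}(R)$; that is, the indecomposable $(n,s)$-presented modules (over all $s$) contribute no new isomorphism types beyond those already $(r,s')$-presented.

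The key observation I would exploit is that $\operatorname{ind}(L_{\mathcal H_n})$ consists precisely of the indecomposable finitely presented modules $M$ with $\operatorname{gen}(M)\leq n$ (every such module is $(n,s)$-presented for some $s$, and conversely by Remark~\ref{rem:3.1(3.1),2report,p.16} any indecomposable summand of an $(n,s)$-presented module has at most $n$ generators). So $(\aleph_0,n)$-purity $=(\aleph_0,r)$-purity for all $n>r$ would force every indecomposable finitely presented module to have at most $r$ generators. I would therefore argue the contrapositive: if the number of generators of indecomposable finite-dimensional modules were bounded by some $r$, then $R$ would be of finite representation type. The cleanest route is a dimension count: over a finite-dimensional algebra $R$, an indecomposable module $M$ with $\operatorname{gen}(M)\leq r$ is a quotient of $R^r$, hence $\dim_k M\leq r\cdot\dim_k R$. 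Thus a uniform bound on the number of generators gives a uniform bound on the $k$-dimension of the indecomposables, and by the first Brauer--Thrall theorem (or simply because there are only finitely many isomorphism types of modules of bounded dimension, up to the usual scheme-theoretic count) this forces finite representation type, contradicting the hypothesis.

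Assembling the argument: assume toward a contradiction that there is an $r$ with $(\aleph_0,n)$-purity$=(\aleph_0,r)$-purity for every $n>r$. By the pure-projective translation above, every indecomposable finitely presented left $R$-module then lies in $\operatorname{ind}(L_{\mathcal H_r})$, hence has at most $r$ generators and so $k$-dimension at most $r\dim_k R$. Since $R$ is finite-dimensional, there are (up to isomorphism) only finitely many indecomposable modules of bounded dimension, so $R$ is of finite representation type, contradicting the assumption. Therefore no such $r$ exists, and for every $r$ there is $n>r$ with $(\aleph_0,n)$-purity$\neq(\aleph_0,r)$-purity.

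The main obstacle I anticipate is making the passage from ``bounded number of generators of indecomposables'' to ``finite representation type'' fully rigorous and correctly invoking the right structural input. One must be careful that $\operatorname{gen}(M)\leq r$ really is equivalent to membership in $\operatorname{ind}(L_{\mathcal H_r})$ (the direction that a bounded-generator indecomposable is actually $(r,s)$-presented for \emph{some} $s$ uses finite presentation, which holds since finite-dimensional modules are finitely presented), and that the dimension bound genuinely yields finitely many indecomposables; this last point is exactly the content of the first Brauer--Thrall conjecture (a theorem for finite-dimensional algebras), so the cleanest write-up cites it rather than reproving it. An alternative, perhaps more self-contained, obstacle-avoiding tactic is to quote directly that an algebra not of finite representation type has indecomposable finite-dimensional modules of arbitrarily large dimension and hence of arbitrarily large minimal number of generators, producing for each $r$ an indecomposable with more than $r$ generators that then distinguishes $(\aleph_0,n)$-purity from $(\aleph_0,r)$-purity for suitable $n$.
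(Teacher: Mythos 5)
Your proposal is correct and is essentially the paper's own argument: both reduce equality of the purities to equality of the classes of indecomposable direct summands of $(n,s)$-presented modules (Corollary~\ref{cor:4.2(4.13),p.8} together with Remark~\ref{rem:3.1(3.1),2report,p.16}), and both obtain the contradiction from Brauer--Thrall~I, which the paper invokes as \cite[Corollary 1.5, p.194]{AuReSm95} to produce an indecomposable $M$ with $\textup{gen}(M)\geq r+1$ --- exactly the ``alternative tactic'' of your final paragraph, while your main write-up is just the contrapositive phrasing of the same step. One caution: your parenthetical suggestion that bounded $k$-dimension ``simply'' yields finitely many isomorphism types is false over an infinite field (the Kronecker algebra has infinitely many pairwise non-isomorphic indecomposables of dimension $2$); this does not damage your proof, since the genuine input is Brauer--Thrall~I, which you correctly identify and cite.
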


\begin{proof} Suppose that $R$ is  not of finite representation type. Assume that there is $r\in \mathbb{Z}^{+}$ such that for all $n>r$  then $(\aleph_{0},n)$-purity$= (\aleph_{0},r)$-purity for left $R$-modules. Since $R$ is a finite-dimensional algebra and it is not finite representation type it follows from \cite[Corollary 1.5, p.194]{AuReSm95} that there is a finitely generated indecomposable left $R$-module $M$ such that gen$(M)\geq r+1$.   By assumption, $(\aleph_{0},\textup{gen}(M))$-purity$= (\aleph_{0},r)$-purity for left $R$-modules and hence by Corollary~\ref{cor:4.2(4.13),p.8},  $M\in$ind$(\{(r,s)$- presented left $R$-modules $\mid$ $s\in \mathbb{Z}^{+}\})$, which  is a contradiction.

\end{proof}

Let $R$ be an algebra over a field $k$. From now, we use $M^{*}$ to denote Hom$_k(M,k)$ for any $R$-module $M$.

\begin{prop}\label{prop:4.4(Proposition 4.36, p.33 and Proposition 4.48(a), p.43(a))} Let R be a finite-dimensional algebra over a field $k$ and let $\mathcal H$ be a set of matrices
over R. Then a left R-module
M  is $L_{\mathcal H}$-pure-injective if and only if  M is  a direct summand of a direct product
of modules in $\textup{ind}(\{$$D_{H}^{*}\mid H\in\mathcal H\cup\{\underset{1\times1}{0}\}\}).$
\end{prop}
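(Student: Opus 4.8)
The plan is to reduce this statement to the general description of $S$-pure-injective modules already established in Theorem~\ref{thm:28(4.27),black,p.22}, then to refine the description using the Krull-Schmidt property of finite-dimensional algebras. First I would set $S=L_{\mathcal H}$, so that $L_{\mathcal H}$-purity$=S$-purity trivially, and invoke Theorem~\ref{thm:28(4.27),black,p.22} to conclude that the class of $L_{\mathcal H}$-pure-injective left $R$-modules is exactly $\textup{Prod}((D_{\mathcal H}\cup\{R_R\})^*)$. The key point is that over a finite-dimensional $k$-algebra we take $K=k$ and $E=k$, so the dual $M^*=\textup{Hom}_k(M,k)$ is the ordinary vector-space dual; this is the dual fixed in the sentence preceding the proposition. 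Thus the task becomes to show that $\textup{Prod}((D_{\mathcal H}\cup\{R_R\})^*)$ coincides with the class of direct summands of direct products of modules in $\textup{ind}(\{D_H^*\mid H\in\mathcal H\cup\{\underset{1\times1}{0}\}\})$.

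The main work is to handle the finite direct sum that appears in $\textup{Prod}$ and to replace $R_R$ by the trivial matrix. I would first observe that $D_{\underset{1\times1}{0}}=R_R$, which is why enlarging $\mathcal H$ by the zero $1\times1$ matrix lets us fold the summand $\{R_R\}$ into the family; this matches the convention $D_\emptyset=\{R_R\}$ noted after the definition of $L_{\mathcal H}$, and it justifies writing $(D_{\mathcal H}\cup\{R_R\})^*=\{D_H^*\mid H\in\mathcal H\cup\{\underset{1\times1}{0}\}\}$ up to the dualising. Next, since $R$ is finite-dimensional it is a Krull-Schmidt ring, so each $D_H$ (being finitely presented as a right module) decomposes as a finite direct sum of indecomposables in $\textup{ind}(D_H)$. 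Dualising, $D_H^*\simeq\bigoplus_{N\in\textup{ind}(D_H)}N^*$, and each $N^*$ is again finite-dimensional and decomposes into indecomposables; in fact $(-)^*$ is a duality on finite-dimensional modules, so $N^*$ is indecomposable whenever $N$ is. Hence $D_H^*$ is a finite direct sum of modules in $\textup{ind}(\{D_H^*\mid H\in\mathcal H\cup\{\underset{1\times1}{0}\}\})$, and conversely every such indecomposable is a direct summand of some $D_H^*$.

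With these two observations in hand, both inclusions follow by a routine bookkeeping argument on products and summands. For the forward inclusion, a module in $\textup{Prod}((D_{\mathcal H}\cup\{R_R\})^*)$ is a direct summand of a product $\prod_i D_{H_i}^*$; replacing each factor by its finite indecomposable decomposition exhibits the product as a product of modules in the indicated ind-class, so the module is a direct summand of a direct product of those indecomposables. For the reverse inclusion, any indecomposable $D_H^*$-summand is a direct summand of $D_H^*\in(D_{\mathcal H}\cup\{R_R\})^*$, hence lies in $\textup{Prod}((D_{\mathcal H}\cup\{R_R\})^*)$, and $\textup{Prod}$ of these is contained in $\textup{Prod}((D_{\mathcal H}\cup\{R_R\})^*)$ since the latter is closed under products and summands. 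The step I expect to require the most care is the compatibility of the vector-space dual with the $D_H$-versus-$D_H^*$ passage together with the Krull-Schmidt decomposition: one must check that $(-)^*=\textup{Hom}_k(-,k)$ genuinely sends indecomposable finite-dimensional modules to indecomposable finite-dimensional modules and commutes with finite direct sums, so that the ind-class on the right is exactly the set of indecomposable summands of the $D_H^*$, with no extra or missing pieces.
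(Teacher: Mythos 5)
Your proposal is correct and follows essentially the same route as the paper: the paper's proof simply invokes Theorem~\ref{thm:28(4.27),black,p.22} together with the observation that each $D_H^*$ (being finite-dimensional) is a finite direct sum of indecomposable modules, which is exactly the content you spell out, including the identification $D_{\underset{1\times1}{0}}=R_R$. Your extra care about $\textup{Hom}_k(-,k)$ being a duality preserving indecomposability is a valid (if slightly roundabout) way of seeing that the indecomposable summands of $D_H^*$ are what they should be; the paper takes this for granted.
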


\begin{proof}
 This follows by  Theorem~\ref{thm:28(4.27),black,p.22} since each module $D_{H}^{*}$  is a finite direct sum of indecomposable modules.
\end{proof}

We now describe these modules in terms of ind$(\{L_H\mid H\in\mathcal H\cup\{\underset{1\times1}{0}\}\})$.

\begin{thm}\label{thm:4.5} Let R be a finite-dimensional algebra over a field $k$ and let $S$ be a set of indecomposable finite-dimensional modules. Then the $S$-pure-injective left $R$-modules are the  direct summands of  direct products
of modules in $\tau S\cup R$-\textup{inj},  where $\tau$  is the Auslander-Reiten translate and R-\textup{inj} denotes the set of indecomposable injective left R-modules.

\end{thm}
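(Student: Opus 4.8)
The plan is to combine the general description of $S$-pure-injectives from Theorem~\ref{thm:28(4.27),black,p.22} with the special structure available over a finite-dimensional algebra, namely the identification of the dual $D_H^*$ via the Auslander--Reiten translate. By Proposition~\ref{prop:4.4(Proposition 4.36, p.33 and Proposition 4.48(a), p.43(a))}, if $\mathcal H$ is any set of matrices with $L_{\mathcal H}$-purity equal to $S$-purity, then the $S$-pure-injectives are exactly the direct summands of direct products of modules in $\textup{ind}(\{D_H^*\mid H\in\mathcal H\cup\{\underset{1\times1}{0}\}\})$. So the whole theorem reduces to identifying this set of indecomposable modules with $\tau S\cup R\textup{-inj}$, at least up to taking direct summands of direct products (equivalently, up to the closure operation $\textup{Prod}$).

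First I would choose $\mathcal H$ so that $\textup{ind}(L_{\mathcal H})=S$; since $S$ is a set of indecomposable finite-dimensional modules, each $M\in S$ is finitely presented and so is $L_H$ for a suitable matrix $H$, and by Proposition~\ref{prop:4.1(4.12)(a),p.7(a)} and Corollary~\ref{cor:4.2(4.13),p.8} the purity $L_{\mathcal H}$-purity depends only on $\textup{ind}(L_{\mathcal H})$, so I may take $\mathcal H=\{H_M\mid M\in S\}$ with $L_{H_M}\cong M$. The key computation is then to show that for each such $H$, the indecomposable summands of $D_H^*$ are precisely (the AR-translate of) the indecomposable summands of $L_H$, together with injectives coming from projective summands. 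Concretely, $D_H$ is the transpose-type dual right module $R^n/\textup{im}(\lambda_H)$, and $D_H^*=\textup{Hom}_k(D_H,k)$; over a finite-dimensional algebra the standard relationship between the transpose $\textrm{Tr}$ and the $k$-dual is $\tau=D\,\textrm{Tr}$, where $D=\textup{Hom}_k(-,k)$. Reading off the presentation matrix $H$, the right module $D_H$ is exactly $\textrm{Tr}(L_H)$ (up to projective summands, which is precisely the ambiguity in the Auslander--Bridger dual noted before Lemma~\ref{lem:3.12(3.12),2report,p.19}), so $D_H^*=D\,\textrm{Tr}(L_H)=\tau L_H$, again modulo injective/projective summands. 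Thus $\textup{ind}(D_H^*)=\tau(\textup{ind}(L_H))$ up to injective summands, and ranging over $H\in\mathcal H$ gives $\tau S$; the matrix $\underset{1\times1}{0}$ contributes $D_0=R_R$, whose dual $R_R^*$ is the injective cogenerator decomposing into the indecomposable injective left modules $R\textup{-inj}$.

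I expect the main obstacle to be the bookkeeping of projective and injective summands in the identification $D_H^*=\tau L_H$. The Auslander--Bridger dual $D(M)=\textrm{Tr}(M)$ is only well defined up to adding finitely generated projectives, and the AR-translate $\tau$ is defined on the stable category, killing projectives and producing modules with no injective summands; so the cleanest route is to argue at the level of the sets of \emph{non-injective} indecomposables. Precisely, I would show $\textup{ind}(D_H^*)\setminus R\textup{-inj}=\tau(\textup{ind}(L_H))\setminus\{0\}$ by fixing a minimal projective presentation of $L_H$, dualizing to get a presentation of $D_H$ as the transpose, and then applying $D=\textup{Hom}_k(-,k)$ to land in $\tau L_H$ by the definition $\tau=D\,\textrm{Tr}$; any projective summand of the transpose becomes, after $D$, an injective summand, which is absorbed into $R\textup{-inj}$. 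Once this local identification is in hand, the theorem follows by taking the union over $H\in\mathcal H\cup\{\underset{1\times1}{0}\}$ and invoking Proposition~\ref{prop:4.4(Proposition 4.36, p.33 and Proposition 4.48(a), p.43(a))}, observing that $\textup{Prod}$ of the resulting set equals $\textup{Prod}(\tau S\cup R\textup{-inj})$ since adding or deleting injective summands does not change the class of direct summands of direct products.
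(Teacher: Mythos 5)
Your proposal is correct and follows essentially the same route as the paper: reduce via Proposition~\ref{prop:4.4(Proposition 4.36, p.33 and Proposition 4.48(a), p.43(a))} and identify $D_H^{*}$ with $\tau L_H$ through the relation $\tau = D\,\mathrm{Tr}$, with the zero matrix contributing $R_R^{*}$ and hence $R$-inj. The paper's proof is terser --- it simply invokes $\tau M=(DM)^{*}$ for a minimal projective presentation so that $\tau L_H=(D_H)^{*}$ on the nose --- whereas you make explicit the bookkeeping of projective/injective summands for non-minimal presentations, which is a reasonable elaboration of the same argument rather than a different one.
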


\begin{proof} The Auslander-Reiten translate of a module $M$ is given by the formula $\tau M=(DM)^*$ where $DM$ is the Auslander-Bridger  dual $(=$transpose$)$ of $M$ obtained from a minimal projective resolution of $M$. In particular $\tau L_H=(D_H)^*$ so this follows from Proposition~\ref{prop:4.4(Proposition 4.36, p.33 and Proposition 4.48(a), p.43(a))}.
\end{proof}

\begin{cor}\label{cor:4.5(Proposition 4.48, p.43)}
 Let R be  a finite-dimensional algebra over a field  $k,$
let $\mathcal H$ be a set of matrices over R. If  $\textup{ind}\{$\textup{Hom}$_{k}(D_{H},k)\mid H\in\mathcal H\cup\{\underset{1\times1}{0}\}\}$  is finite then it is the set of  indecomposable $L_{\mathcal H}$-pure-injective
left R-modules and every $L_{\mathcal H}$-pure-injective module is a direct sum of copies of these modules.\end{cor}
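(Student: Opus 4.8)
The plan is to deduce Corollary~\ref{cor:4.5(Proposition 4.48, p.43)} directly from Theorem~\ref{thm:4.5} by exploiting the finiteness hypothesis on the set of indecomposable summands. First I would set $\s=\textup{ind}\{\textup{Hom}_{k}(D_{H},k)\mid H\in\mathcal H\cup\{\underset{1\times1}{0}\}\}$ and observe that, by the interpretation $\tau L_H=(D_H)^*$ used in the proof of Theorem~\ref{thm:4.5} (together with the convention $L_\emptyset=\{{}_RR\}$, so that $H=\underset{1\times1}{0}$ contributes the indecomposable injectives), the set $\s$ is precisely $\tau S\cup R\text{-inj}$ for the relevant $S$. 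Theorem~\ref{thm:4.5} then tells us that the $L_{\mathcal H}$-pure-injective left $R$-modules are exactly the direct summands of direct products of modules in $\s$.

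The substance of the corollary is therefore to upgrade ``direct summands of direct products'' to ``direct sums of copies'' once $\s$ is finite, and to identify the indecomposables. Since $R$ is a finite-dimensional algebra, each module in $\s$ is finite-dimensional, hence of finite length, hence has local endomorphism ring (being indecomposable of finite length, by Fitting's lemma); in particular each is pure-injective, indeed $\Sigma$-pure-injective. With $\s$ finite, any direct product of copies of modules in $\s$ is a direct sum of finite-length modules with local endomorphism rings, so by the Krull--Schmidt--Azumaya theorem every such product, and every direct summand of it, decomposes as a direct sum of copies of the modules in $\s$. Thus every $L_{\mathcal H}$-pure-injective module is a direct sum of copies of modules in $\s$, and conversely every member of $\s$ is $L_{\mathcal H}$-pure-injective by Theorem~\ref{thm:4.5}.

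It remains to check that the members of $\s$ are exactly the indecomposable $L_{\mathcal H}$-pure-injectives. Each module in $\s$ is indecomposable by construction and is $L_{\mathcal H}$-pure-injective by the previous paragraph; conversely, if $N$ is any indecomposable $L_{\mathcal H}$-pure-injective module then $N$ is a direct sum of copies of modules in $\s$, and indecomposability forces $N$ to be (isomorphic to) a single member of $\s$. Hence $\s$ is precisely the set of indecomposable $L_{\mathcal H}$-pure-injective left $R$-modules.

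The step I expect to be the only real obstacle is justifying that a \emph{product} (not just a finite direct sum) of finitely many isomorphism types of finite-length modules decomposes into a direct sum of copies of those types, and that direct summands inherit such a decomposition; this is exactly where finiteness of $\s$ is essential and where one invokes Azumaya's theorem (the endomorphism rings are local) to guarantee the decomposition is unique and summand-closed. Everything else is a formal unwinding of Theorem~\ref{thm:4.5} and the identification $\s=\tau S\cup R\text{-inj}$.
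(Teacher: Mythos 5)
Your reduction to the statement ``every direct product of copies of modules in $\mathcal{S}$ is a direct sum of copies of modules in $\mathcal{S}$'' follows the paper (which works from Proposition~\ref{prop:4.4(Proposition 4.36, p.33 and Proposition 4.48(a), p.43(a))}; your detour through Theorem~\ref{thm:4.5} is equivalent), and your identification of the indecomposables at the end is fine. The gap is that you never actually prove that statement. You assert: ``With $\mathcal{S}$ finite, any direct product of copies of modules in $\mathcal{S}$ is a direct sum of finite-length modules with local endomorphism rings, so by the Krull--Schmidt--Azumaya theorem \ldots''. The Krull--Schmidt--Azumaya theorem cannot carry this step: it takes as \emph{input} a module already decomposed as a direct sum of modules with local endomorphism rings, and its output is uniqueness and exchange properties of that decomposition. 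It has nothing to say about whether an infinite direct \emph{product} decomposes as a direct sum of indecomposables in the first place, and that is the entire content here. Per-factor hypotheses (finite length, local endomorphism ring) are not enough for such a decomposition to exist: over $\mathbb{Z}$ the product $\prod_{n\geq 1}\mathbb{Z}/p^{n}\mathbb{Z}$ of finite-length modules with local endomorphism rings is not a direct sum of finite-length modules (it is not even a torsion group). So the finiteness of $\mathcal{S}$ must enter through some genuine theorem, and naming Azumaya --- as you also do in your closing paragraph, where you correctly isolate this step as the crux --- does not fill the hole.

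The fact that does the job, and the one the paper's proof cites, is \cite[Theorem 4.4.28, p.180]{Pre09}: if $M$ is indecomposable and of finite length over its endomorphism ring, then every product of copies of $M$ is a direct sum of copies of $M$. (The modules in $\mathcal{S}$ do have finite length over their endomorphism rings, because they are finite-dimensional over $k$ and $k$ acts through the endomorphism ring; note that finite length as an $R$-module is not the hypothesis of that theorem.) This result belongs to the theory of $\Sigma$-pure-injective modules --- descending chain condition on pp-definable subgroups --- which is exactly the ingredient you mention (``indeed $\Sigma$-pure-injective'') but never use. With it, each $M^{I}$ with $M\in\mathcal{S}$ becomes a direct sum of copies of $M$; a product of modules from the finite set $\mathcal{S}$ is a finite direct sum of such products, hence a direct sum of copies of modules in $\mathcal{S}$; and then your Azumaya argument legitimately identifies the indecomposable summands. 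One further small caution: that an arbitrary direct summand of such a direct sum is again a direct sum of copies of modules in $\mathcal{S}$ is also not pure Azumaya; it holds because such a summand is itself $\Sigma$-pure-injective, hence a direct sum of indecomposables, to which Azumaya then applies (alternatively, invoke Crawley--J{\'o}nsson--Warfield, the summands being countably generated).
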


\begin{proof} This follows since if $M$ is indecomposable of finite length over its endomorphism ring then every product of copies of $M$ is a direct sum of copies of $M$ $($see, for example, \cite[Theorem 4.4.28, p.180]{Pre09}$)$.

\end{proof}

Recall $($see\cite[3.4.7]{Pre09}$)$ that a subclass $T$ of $R$-Mod is said to be
definable if it is closed under direct products, direct limits and
pure submodules. A class $T$ of pure-injective
modules closed under direct products, direct summands and isomorphisms is definable if and only if each direct sum of modules in $T$ is pure-injective, that is if and only if each element in $T$ is $\Sigma$-pure-injective $($see, for example, \cite[ 4.4.12]{Pre09}$)$. In this case every module in $T$ is a direct sum of indecomposable modules.
Let $S$ be a class of finitely presented left $R$-modules. We denote by $S$-Pinj the
class of  $S$-pure-injective
left $R$-modules.

\begin{cor}\label{cor:4.7(Lemma 4.44, p.40)}
Let R be a finite-dimensional algebra
over a field k and let $\mathcal H$ be a set of matrices over R. Then $L_{\mathcal H}$-Pinj is a definable subclass of $R$-Mod if and only if each direct sum of modules in $\textup{ind}(\{D_{H}^{*}\mid H\in\mathcal H\})$ is pure-injective. \end{cor}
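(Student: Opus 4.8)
The plan is to apply the general characterization of definability recalled just before the statement, namely the criterion that a class of pure-injective modules closed under products, summands and isomorphisms is definable if and only if each of its direct sums is pure-injective. First I would observe that $L_{\mathcal H}$-Pinj is exactly the class described in Theorem~\ref{thm:28(4.27),black,p.22}, that is, $\textup{Prod}((D_{\mathcal H}\cup\{R_R\})^*)$, and that by Proposition~\ref{prop:4.4(Proposition 4.36, p.33 and Proposition 4.48(a), p.43(a))} this coincides with the class of direct summands of direct products of modules in $\textup{ind}(\{D_H^*\mid H\in\mathcal H\cup\{\underset{1\times1}{0}\}\})$. This class is visibly closed under direct products, direct summands and isomorphisms, so the quoted criterion applies and $L_{\mathcal H}$-Pinj is definable if and only if every direct sum of $L_{\mathcal H}$-pure-injective modules is again $L_{\mathcal H}$-pure-injective.

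Next I would reduce the condition ``every direct sum of $L_{\mathcal H}$-pure-injectives is pure-injective'' to a condition on the indecomposable building blocks. For the forward direction, if $L_{\mathcal H}$-Pinj is definable then each element is $\Sigma$-pure-injective, and in particular each module in $\textup{ind}(\{D_H^*\mid H\in\mathcal H\})$, being an indecomposable summand of such a module, has the property that its direct sums are pure-injective; since these indecomposables lie in $L_{\mathcal H}$-Pinj, any direct sum of them is pure-injective. For the converse, suppose each direct sum of modules in $\textup{ind}(\{D_H^*\mid H\in\mathcal H\})$ is pure-injective. An arbitrary module in $L_{\mathcal H}$-Pinj is a summand of a product of the indecomposables from $\textup{ind}(\{D_H^*\mid H\in\mathcal H\cup\{\underset{1\times1}{0}\}\})$; here I would note that the extra module coming from $\underset{1\times1}{0}$ is $R_R^*$, which is injective (as used in the proof of Theorem~\ref{thm:28(4.27),black,p.22}) and therefore $\Sigma$-pure-injective, so adjoining it causes no trouble. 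A direct sum of summands of products is a summand of a product of the corresponding direct sums, and since each such direct sum is pure-injective and pure-injectivity passes to products and summands, the direct sum lies in $L_{\mathcal H}$-Pinj, giving definability.

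The step I expect to require the most care is the bookkeeping that lets one pass freely between products, summands and direct sums while keeping the two index sets (the $D_H^*$ with $H\in\mathcal H$ versus the injective cogenerator piece $R_R^*$) straight; one must check that the injective summand genuinely contributes nothing to the $\Sigma$-pure-injectivity obstruction, which is why it is legitimate to state the final criterion purely in terms of $\textup{ind}(\{D_H^*\mid H\in\mathcal H\})$ rather than $\textup{ind}(\{D_H^*\mid H\in\mathcal H\cup\{\underset{1\times1}{0}\}\})$. Everything else is an application of the definability criterion from \cite[4.4.12]{Pre09} together with Theorem~\ref{thm:28(4.27),black,p.22} and Proposition~\ref{prop:4.4(Proposition 4.36, p.33 and Proposition 4.48(a), p.43(a))}, so the proof should be short once this reduction is spelled out.
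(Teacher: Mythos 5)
Your setup and the forward direction are fine and agree with the paper's own proof: $L_{\mathcal H}$-Pinj $=\textup{Prod}((D_{\mathcal H}\cup\{R_{R}\})^{*})$ is closed under direct products, direct summands and isomorphisms, the modules in $\textup{ind}(\{D_{H}^{*}\mid H\in\mathcal H\})$ belong to this class, and your treatment of the injective piece $R_{R}^{*}$ (over the Noetherian ring $R$, direct sums of injectives are injective) is exactly the paper's.

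The converse, however, rests on a step that is not a valid bookkeeping principle and is in fact the whole difficulty: ``a direct sum of summands of products is a summand of a product of the corresponding direct sums.'' The only natural map available here is the inclusion $\bigoplus_{j}\prod_{i}M_{i,j}\hookrightarrow\prod_{j}\prod_{i}M_{i,j}$, and an infinite direct sum sits inside the corresponding product only as a \emph{pure} submodule; that pure embedding splits precisely when the direct sum is pure-injective, which is the statement you are trying to prove, so the argument is circular. (That such embeddings genuinely fail to split is shown by $\bigoplus_{p}\mathbb{Z}/p\mathbb{Z}\subseteq\prod_{p}\mathbb{Z}/p\mathbb{Z}$ over $\mathbb{Z}$: the product is pure-injective, the direct sum is pure in it but is not a summand.) Nor can you bypass this by noting that each factor $\prod_{i}M_{i,j}$ is pure-injective --- pure submodules of pure-injectives need not be summands --- or even $\Sigma$-pure-injective, since a product of $\Sigma$-pure-injective modules need not be $\Sigma$-pure-injective (again $\prod_{p}\mathbb{Z}/p\mathbb{Z}$). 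The missing idea, which is what the paper's proof supplies, is to use the full strength of the hypothesis to get $\Sigma$-pure-injectivity of the relevant \emph{direct sum} and then transfer it to the product via definable subcategories: writing $T^{\prime}=\textup{ind}(\{D_{H}^{*}\mid H\in\mathcal H\})\cup R\textup{-inj}$, any direct sum $\bigoplus_{i\in I}M_{i}$ with $M_{i}\in T^{\prime}$ is not merely pure-injective but $\Sigma$-pure-injective, because a direct sum of copies of it is again a direct sum of modules in $T^{\prime}$; since $\prod_{i\in I}M_{i}$ lies in the definable subcategory generated by $\bigoplus_{i\in I}M_{i}$, \cite[4.4.12]{Pre09} gives that $\prod_{i\in I}M_{i}$, and hence its summand $M$, is $\Sigma$-pure-injective. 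Thus every element of $L_{\mathcal H}$-Pinj is $\Sigma$-pure-injective, and definability follows from the criterion quoted before the corollary in exactly that form (``each element is $\Sigma$-pure-injective''). Some such appeal to $\Sigma$-pure-injectivity --- equivalently, to the fact that pure submodules of $\Sigma$-pure-injective modules are direct summands --- is unavoidable, and it is absent from your proposal.
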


\begin{proof} Let $T=\textup{ind}(\{D_{H}^{*}\mid H\in\mathcal H\})$ and $T^\prime=T\cup R$-inj.

One direction follows from the remarks above and  Proposition~\ref{prop:4.4(Proposition 4.36, p.33 and Proposition 4.48(a), p.43(a))}.

$(\Leftarrow)$.  By hypothesis, each direct sum of modules in $T$ is pure-injective. Since $R$ is a left Noetherian ring, each direct sum of modules in $R$-inj is injective.  Thus each direct sum of modules in $T^\prime$ is pure-injective and hence  is $\Sigma$-pure-injective. Let $M\in L_{\mathcal H}$-Pinj. By Proposition~\ref{prop:4.4(Proposition 4.36, p.33 and Proposition 4.48(a), p.43(a))}, there exists a subfamily $\{M_i\}_{i\in I}$ of $T^\prime$ such that $M$ is  a direct summand of $\underset{i\in I}{\prod}M_{i}$. By the proof above, $\underset{i\in I}{\bigoplus}M_{i}$ is $\Sigma$-pure-injective. Since  $\underset{i\in I}{\prod}M_{i}$ is in the definable subcategory generated by $\underset{i\in I}{\bigoplus}M_{i}$ it follows from \cite[Proposition(4.4.12), p.176]{Pre09} that $\underset{i\in I}{\prod}M_{i}$ is $\Sigma$-pure-injective. It follows that  $M$ is $\Sigma$-pure-injective and hence each element in $L_{\mathcal H}$-Pinj is $\Sigma$-pure-injective. Therefore  $L_{\mathcal H}$-Pinj is a definable subclass of $R$-Mod.

\end{proof}

Every finite-dimensional module is $\Sigma$-pure-injective and by \cite[Theorem 4.6, p.750]{Len83} every direct sum of preinjective  modules is $\Sigma$-pure-injective. The equivalence of (1) and (2)  in the next result therefore follows from the description  of the $\Sigma$-pure-injective modules in  \cite[Theorem 2.1, p.847]{PrPu02}  and the equivalence with (3) follows since the duality Hom$_k(-,k)$ interchanges preprojective and preinjective modules and sends regular modules to regular modules.

\begin{prop}\label{prop:4.8(prop 4.44(a), p.41)}
Let $R$ be a tame hereditary finite-dimensional
algebra over a field $k$ and let $\mathcal H$ be a set of matrices over
$R.$ Then the following statements are equivalent.

$\left(1\right)$   $L_{\mathcal H}$-Pinj is a definable subclass of $R$-Mod.

$\left(2\right)$ The set of preprojective or regular modules in $\textup{ind}(\{D_{H}^{*}\mid H\in\mathcal H\})$ is finite.

$\left(3\right)$ The set of preinjective or regular modules in $\textup{ind}(\{D_{H}\mid H\in\mathcal H\})$ is finite.
\end{prop}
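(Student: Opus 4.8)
The plan is to exploit the preamble the authors have already laid out: by the discussion immediately preceding the statement, for a tame hereditary finite-dimensional algebra the $\Sigma$-pure-injective modules are classified in \cite{PrPu02}, and every finite-dimensional module together with every direct sum of preinjective modules is automatically $\Sigma$-pure-injective (using \cite{Len83}). The key reduction is Corollary~\ref{cor:4.7(Lemma 4.44, p.40)}, which tells us that $L_{\mathcal H}$-Pinj is definable if and only if each direct sum of modules in $\textup{ind}(\{D_{H}^{*}\mid H\in\mathcal H\})$ is pure-injective, i.e.\ if and only if each module in $\textup{ind}(\{D_{H}^{*}\mid H\in\mathcal H\})$ is $\Sigma$-pure-injective and the whole direct sum remains pure-injective. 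So condition $\left(1\right)$ is precisely a $\Sigma$-pure-injectivity condition on the set $T=\textup{ind}(\{D_{H}^{*}\mid H\in\mathcal H\})$.

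First I would establish $\left(1\right)\Leftrightarrow\left(2\right)$. The indecomposable modules over a tame hereditary algebra split into preprojective, regular, and preinjective families. By \cite{Len83} any direct sum of preinjective modules is $\Sigma$-pure-injective, so the preinjective part of $T$ never obstructs definability. A direct sum of infinitely many pairwise non-isomorphic preprojective modules, or of infinitely many regular modules, fails to be $\Sigma$-pure-injective: this is exactly the content of the classification of $\Sigma$-pure-injective modules over tame hereditary algebras in \cite[Theorem 2.1]{PrPu02}, since the generic, adic and Pr\"ufer modules that must appear in the pure-injective hull of such an infinite direct sum do not lie among the finite-dimensional summands. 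Thus a direct sum of modules in $T$ is $\Sigma$-pure-injective exactly when only finitely many preprojective modules and only finitely many regular modules occur, which is the statement $\left(2\right)$. Combining this equivalence with Corollary~\ref{cor:4.7(Lemma 4.44, p.40)} gives $\left(1\right)\Leftrightarrow\left(2\right)$.

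Next I would pass from $\left(2\right)$ to $\left(3\right)$ via the $k$-duality $M\mapsto M^{*}=\textup{Hom}_k(M,k)$. The crucial facts are that this duality is exact and sends finite-dimensional indecomposables to finite-dimensional indecomposables, interchanging preprojective and preinjective modules and carrying regular modules to regular modules. Since $D_{H}^{*}=(D_H)^{*}$ and $M^{**}\cong M$ for finite-dimensional $M$, the set $\textup{ind}(\{D_{H}^{*}\mid H\in\mathcal H\})$ is obtained from $\textup{ind}(\{D_{H}\mid H\in\mathcal H\})$ by applying $^{*}$. Under this correspondence the preprojective-or-regular modules in the former set correspond bijectively to the preinjective-or-regular modules in the latter set. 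Hence the finiteness condition in $\left(2\right)$ is equivalent to the finiteness condition in $\left(3\right)$, completing the cycle.

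\textbf{The main obstacle} is the careful invocation of \cite{PrPu02}: I must make precise that an infinite direct sum of pairwise distinct preprojective (or regular) finite-dimensional indecomposables is genuinely not $\Sigma$-pure-injective, rather than merely not pure-injective, and that the preinjective modules are the only family that survives. This rests on knowing that the pure-injective hull of such an infinite direct sum contains an infinite-dimensional indecomposable (a generic, adic, or Pr\"ufer module) outside the summands, so the sum cannot be a direct sum of indecomposables in the required way. I would handle this by citing the explicit description of the $\Sigma$-pure-injective modules in \cite[Theorem 2.1]{PrPu02} and the behaviour of preinjectives in \cite[Theorem 4.6]{Len83}, and I would keep the duality bookkeeping in step three explicit enough that the exactness and the preprojective/preinjective interchange are transparently applied to the specific sets $\textup{ind}(\{D_H^{*}\})$ and $\textup{ind}(\{D_H\})$.
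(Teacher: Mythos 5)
Your proposal is correct and follows essentially the same route as the paper: the paper also deduces $\left(1\right)\Leftrightarrow\left(2\right)$ from the definability criterion (Corollary~\ref{cor:4.7(Lemma 4.44, p.40)}) combined with the fact that finite-dimensional modules and direct sums of preinjectives are $\Sigma$-pure-injective (\cite[Theorem 4.6]{Len83}) and the classification of $\Sigma$-pure-injective modules in \cite[Theorem 2.1]{PrPu02}, and it obtains $\left(2\right)\Leftrightarrow\left(3\right)$ exactly as you do, from the duality $\textup{Hom}_k(-,k)$ interchanging preprojective and preinjective modules while preserving regular ones. Your write-up merely makes explicit the bookkeeping (the reduction via Corollary~\ref{cor:4.7(Lemma 4.44, p.40)} and the identification $\textup{ind}(\{D_{H}^{*}\})=(\textup{ind}(\{D_{H}\}))^{*}$) that the paper leaves implicit.
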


Let  $_{R}$pinj be  the isomorphism classes of  indecomposable pure-injective
left $R$-modules and let  $T\subseteq R$-ind, the class of all finitely presented indecomposable
left $R$-modules. We use fsc$(T)$ $($resp. $\overline{T})$ to
denote the closure of $T$ in the full support topology $($resp.
the Ziegler topology$).$ Recall that fsc$(T)$ is the class Prod$(T)\cap_{R}$pinj.
 See for instance \cite [Sections 5.1.1 and 5.3.7]{Pre09} for details
about the Ziegler topology and the full support topology.

\begin{prop}\label{prop:4.10(prop 4.49, p.44)}
Let R be a tame hereditary
finite-dimensional algebra over a field k. Let $\mathcal H$ be a set of
matrices over R such that $L_{\mathcal H}$-Pinj is  definable.
Then the following statements are equivalent.

$\left(1\right)$ The generic module is $L_{\mathcal H}$-pure-injective.

$\left(2\right)$ The set of preinjective left R-modules in  $\textup{ind}(\{D_{H}^{*}\mid H\in\mathcal H\})$  is
infinite.

$\left(3\right)$ All but at most $n(R)-2$ Pr\"{u}fer modules are $L_{\mathcal H}$-pure-injective,
where $n(R)$ is the number of isomorphism classes of simple R-modules.

$\left(4\right)$  At least one Pr\"{u}fer R-module is $L_{\mathcal H}$-pure-injective. \end{prop}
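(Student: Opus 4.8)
The plan is to prove the equivalence by moving through a cycle of implications, using the structure theory of tame hereditary algebras together with the description of $L_{\mathcal H}$-pure-injectives already established. Throughout I would work with the set $T=\textup{ind}(\{D_H^*\mid H\in\mathcal H\})$, so that by Proposition~\ref{prop:4.4(Proposition 4.36, p.33 and Proposition 4.48(a), p.43(a))} the $L_{\mathcal H}$-pure-injectives are exactly the direct summands of products of modules in $T\cup R\textup{-inj}$, and since $L_{\mathcal H}$-Pinj is definable this class equals $\textup{Prod}(T\cup R\textup{-inj})\cap{_R}\textup{pinj}$, i.e.\ the full support closure $\textup{fsc}(T\cup R\textup{-inj})$. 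Recall that over a tame hereditary algebra the indecomposable pure-injectives are classified: the finite-dimensional ones, the adic (Pr\"ufer-dual) modules, the Pr\"ufer modules, and the unique generic module; and the key topological fact is that the generic module and the Pr\"ufer modules lie in the closure of the preinjective modules, while the adic modules lie in the closure of the preprojectives.

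The implication $(2)\Rightarrow(1)$ is the heart of the argument. Assuming infinitely many preinjective modules occur in $T$, I would invoke the Ziegler-topological description (from \cite{Pre09}, \cite{Rin98}) of the closure of an infinite set of preinjectives: any infinite set of preinjective indecomposables has the generic module in its Ziegler closure, and in fact in its full support closure. Since $T$ contains infinitely many preinjectives and these all lie in $T$, the generic module lies in $\textup{fsc}(T)\subseteq\textup{fsc}(T\cup R\textup{-inj})=L_{\mathcal H}$-Pinj, giving $(1)$. For $(1)\Rightarrow(2)$ I would argue contrapositively: if only finitely many preinjectives lie in $T$, then the full support closure of $T\cup R\textup{-inj}$ cannot contain the generic module, because the generic module is only a limit point of infinite families of preinjectives (or of regulars of unbounded quasi-length), and neither the finitely many preinjectives, nor the preprojectives and their limit points (adics), nor the finitely many regulars forced by definability via Proposition~\ref{prop:4.8(prop 4.44(a), p.41)}, accumulate at the generic.

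For the Pr\"ufer modules I would use the same closure calculus. The equivalence $(2)\Leftrightarrow(3)\Leftrightarrow(4)$ should follow from the fact that the Pr\"ufer modules in each of the $n(R)$ tubes, the generic module, and the preinjectives form a single ``preinjective component'' of the Ziegler spectrum whose closure behaviour is governed by the same infinite families. Concretely, an infinite set of preinjectives forces all but finitely many Pr\"ufer modules into the closure, with the precise bound $n(R)-2$ coming from Ringel's analysis in \cite{Rin98} of how many tubes can be ``missed''; conversely a single Pr\"ufer module in the closure already forces infinitely many preinjectives to have been present, since a Pr\"ufer module is a limit only of infinite preinjective families. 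Thus $(4)\Rightarrow(2)$ and $(2)\Rightarrow(3)\Rightarrow(4)$ close the cycle.

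The main obstacle will be the precise topological bookkeeping in $(2)\Rightarrow(3)$, namely pinning down the exact constant $n(R)-2$ rather than merely ``all but finitely many''. This is where I expect to lean most heavily on the explicit computations of closures of preinjective and Pr\"ufer modules in \cite{Rin98} and the description of the Ziegler and full support topologies in \cite[Sections 5.1.1, 5.3.7]{Pre09}; the rest of the argument is the structural classification plus the identification of $L_{\mathcal H}$-Pinj with a full support closure, both of which are already in hand. A secondary technical point to handle carefully is the passage between Ziegler closure and full support closure, since $L_{\mathcal H}$-Pinj is described by the latter; I would use definability (hypothesis of the proposition) to ensure the two closures agree on the relevant points.
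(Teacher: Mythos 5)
Your overall architecture is the same as the paper's: identify the indecomposable $L_{\mathcal H}$-pure-injectives with a Ziegler-closed set (Proposition~\ref{prop:4.4(Proposition 4.36, p.33 and Proposition 4.48(a), p.43(a))} plus definability and \cite[Theorem 5.1.1]{Pre09}), then run Ringel's two results from \cite{Rin98} --- the Corollary on p.113 giving the $n(R)-2$ bound from infinitely many preinjectives, and the Theorem on p.106 relating infinite-length points to the generic module --- around a cycle, with the contrapositive of $(1)\Rightarrow(2)$ handled by Proposition~\ref{prop:4.8(prop 4.44(a), p.41)} (definability forces $T$ finite) and Corollary~\ref{cor:4.5(Proposition 4.48, p.43)}. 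So the ingredients are right; the problem is in two of your justifications, which assert false topological facts.

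First, your step $(4)\Rightarrow(2)$ rests on the claim that ``a Pr\"{u}fer module is a limit only of infinite preinjective families.'' This is wrong: the Pr\"{u}fer module $S[\infty]$ is the direct limit of the regular modules $S[n]$ in its ray, and since it is pure-injective it lies in the Ziegler closure of the infinite family $\{S[n]\mid n\in\mathbb{Z}^{+}\}$, which contains no preinjectives at all. Second, in your contrapositive of $(1)\Rightarrow(2)$ you claim that ``the preprojectives and their limit points (adics)'' do not accumulate at the generic; but adic modules have infinite length, so by the very result you use elsewhere (Ringel's Theorem, p.106) any Ziegler-closed set containing an adic contains the generic, and likewise any closed set containing infinitely many preprojectives contains it. Both errors are harmless in this proposition only because of the definability hypothesis: Proposition~\ref{prop:4.8(prop 4.44(a), p.41)} says that definability of $L_{\mathcal H}$-Pinj forces the set of preprojective \emph{or} regular modules in $T$ to be finite, so no regular or preprojective accumulation can occur. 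You invoke that proposition only for the regulars in one step, and not at all in $(4)\Rightarrow(2)$; as written, that step fails. The repair is either to use definability again (if the preinjectives in $T$ were finite, $T$ would be finite, hence Ziegler-closed and consisting of finite-dimensional modules, hence Pr\"{u}fer-free), or to do what the paper does: prove $(4)\Rightarrow(1)$ directly --- a Ziegler-closed set containing a Pr\"{u}fer module contains a point of infinite length, hence the generic, by Ringel's Theorem p.106 --- and then recover $(2)$ from $(1)$. The same device (working with the Ziegler-closed set of indecomposable $L_{\mathcal H}$-pure-injectives itself, rather than with $\textup{fsc}(T\cup R\textup{-inj})$) also removes the Ziegler-versus-full-support bookkeeping you flag as a worry.
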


\begin{proof}
$\left(1\right)\Rightarrow\left(2\right).$ Let $T=\textup{ind}($ $\{D_{H}^{*}\mid H\in\mathcal H\})$. Assume that the set of preinjective left $R$-modules in $T$ is finite. Since $L_{\mathcal H}$-Pinj is a definable it follows from Proposition~\ref{prop:4.8(prop 4.44(a), p.41)} that $T$ is finite.  By Corollary~\ref{cor:4.5(Proposition 4.48, p.43)}, the generic module cannot be $L_{\mathcal H}$-pure-injective.

$\left(2\right)\Rightarrow\left(3\right).$ Let $X$ be the class
of all indecomposable $L_{\mathcal H}$-pure-injective modules. Since $L_{\mathcal H}$-Pinj is definable it follows from \cite[Theorem 5.1.1, p.211]{Pre09} that $X$ is  a closed set of the Ziegler topology. Since $X$  contains infinitely many non-isomorphic preinjective modules, by \cite[Corollary, p.113]{Rin98}, all but
at most $n(R)-2$ Pr\"{u}fer modules belong to $X,$ where $n(R)$
is the number of isomorphism classes of simple $R$-modules.

$\left(3\right)\Rightarrow\left(4\right).$ This is obvious.

$\left(4\right)\Rightarrow\left(1\right).$ Assume that there is a
Pr\"{u}fer  module which is $L_{\mathcal H}$-pure-injective.
As noted in $\left(3\right)$,    $X$
is a closed set of the Ziegler topology and by hypothesis, it contains at least one module  which is
not of finite length. By \cite[Theorem, p.106]{Rin98}, the generic
module belongs to $X$.\end{proof}

\begin{rem}\label{rem:4.11(Corollary 4.50, p.45)}
If  R is the Kronecker
algebra over a field k then condition
$\left(3\right)$ above becomes:  $\left(3\right)$ Every Pr\"{u}fer module is $L_{\mathcal H}$-pure-injective.

\end{rem}

\begin{lem}\label{lem:4.12}
Let $T\subseteq R$-ind. If \emph{Prod}$(T)$ is definable then $\overline{T}=$\emph{fsc}$(T).$\end{lem}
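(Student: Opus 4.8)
The plan is to reduce everything to the standard correspondence between Ziegler-closed subsets of $_{R}$pinj and definable subcategories of $R$-Mod, together with the definitional identity $\textup{fsc}(T)=\textup{Prod}(T)\cap{}_{R}\textup{pinj}$. Write $\mathcal{D}$ for the definable subcategory of $R$-Mod generated by $T$, that is, the smallest definable subclass containing $T$ (the intersection of definable subclasses is again definable, so $\mathcal{D}$ exists). Since the modules in $T\subseteq R$-ind are finitely presented indecomposables over the finite-dimensional algebra $R$, they are pure-injective, so $T$ is a genuine set of points of the Ziegler spectrum; by the closure/definable-subcategory correspondence \cite[Theorem 5.1.1, p.211]{Pre09} the Ziegler closure is recovered as $\overline{T}=\mathcal{D}\cap{}_{R}\textup{pinj}$. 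It then suffices to prove the two inclusions between $\textup{Prod}(T)\cap{}_{R}\textup{pinj}$ and $\mathcal{D}\cap{}_{R}\textup{pinj}$.

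For the inclusion $\textup{fsc}(T)\subseteq\overline{T}$, which I expect to hold with no hypothesis on $\textup{Prod}(T)$, I would argue that $\textup{Prod}(T)\subseteq\mathcal{D}$. Indeed $\mathcal{D}$ is closed under direct products, and since it is closed under pure submodules it is in particular closed under direct summands (a split monomorphism is pure); as it contains $T$, it therefore contains every direct summand of a direct product of modules in $T$, i.e. $\textup{Prod}(T)\subseteq\mathcal{D}$. Intersecting with $_{R}$pinj gives $\textup{fsc}(T)=\textup{Prod}(T)\cap{}_{R}\textup{pinj}\subseteq\mathcal{D}\cap{}_{R}\textup{pinj}=\overline{T}$.

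For the reverse inclusion I would invoke the hypothesis that $\textup{Prod}(T)$ is definable. Since $\textup{Prod}(T)$ is then a definable subclass containing $T$, and $\mathcal{D}$ is by construction the smallest such subclass, minimality yields $\mathcal{D}\subseteq\textup{Prod}(T)$. Intersecting with $_{R}$pinj gives $\overline{T}=\mathcal{D}\cap{}_{R}\textup{pinj}\subseteq\textup{Prod}(T)\cap{}_{R}\textup{pinj}=\textup{fsc}(T)$. Combining the two inclusions yields $\overline{T}=\textup{fsc}(T)$, as required.

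The only genuinely nonroutine point is the identification $\overline{T}=\mathcal{D}\cap{}_{R}\textup{pinj}$ of the Ziegler closure with the indecomposable pure-injectives of the generated definable subcategory; once this is in hand the argument is a pair of elementary set inclusions, the forward one unconditional and the backward one an immediate consequence of the definability of $\textup{Prod}(T)$ via the minimality of $\mathcal{D}$.
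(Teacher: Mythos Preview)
Your proof is correct and follows essentially the same approach as the paper: both establish $\textup{fsc}(T)\subseteq\overline{T}$ unconditionally (the paper simply declares this ``clear'' while you spell out that $\textup{Prod}(T)\subseteq\mathcal{D}$ by closure under products and summands), and both obtain the reverse inclusion from the minimality of the definable subcategory $\mathcal{D}$ generated by $T$, using $\mathcal{D}\subseteq\textup{Prod}(T)$ when the latter is definable. Your explicit invocation of the correspondence $\overline{T}=\mathcal{D}\cap{}_{R}\textup{pinj}$ makes the passage from $\mathcal{D}\subseteq\textup{Prod}(T)$ to $\overline{T}\subseteq\textup{fsc}(T)$ transparent, which the paper leaves implicit.
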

\begin{proof}
Suppose that Prod$(T)$ is definable. It is clear that fsc$(T)\subseteq\overline{T}.$
Since $T\subseteq$Prod$(T)$ it follows that $D(T)\subseteq$Prod$(T),$
where $D(T)$ is the definable subcategory generated by $T.$ Thus
$\overline{T}\subseteq$fsc$(T)$ and hence $\overline{T}=$fsc$(T).$\end{proof}

\begin{rem}\label{rem:4.13}
Let $T$ be a class of pure-injective left $R$-modules and let $S\subseteq T.$
If \emph{Prod}$(T)$ is a definable subclass of $R$-Mod then so is \emph{Prod}$(S).$\end{rem}

\begin{cor}\label{cor:4.14}
Let R be a tame hereditary finite-dimensional algebra over a field
k and let $\textbf{I}_{1}$ be a class of indecomposable preinjective left
R-modules. Then \emph{fsc}$(\textbf{I}_{1})=\overline{\textbf{I}_{1}}.$ \end{cor}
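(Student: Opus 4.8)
The plan is to deduce the statement from Lemma~\ref{lem:4.12}, which gives $\overline{\textbf{I}_{1}}=\textup{fsc}(\textbf{I}_{1})$ as soon as one knows that $\textup{Prod}(\textbf{I}_{1})$ is definable. Note first that every module in $\textbf{I}_{1}$ is preinjective, hence finite-dimensional, hence finitely presented and indecomposable, so $\textbf{I}_{1}\subseteq R$-ind and Lemma~\ref{lem:4.12} does apply once definability has been established. Thus the whole proof reduces to showing that $\textup{Prod}(\textbf{I}_{1})$ is definable.

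To obtain definability I would invoke the criterion recalled just before Proposition~\ref{prop:4.8(prop 4.44(a), p.41)}: a class of pure-injective modules that is closed under direct products, direct summands and isomorphisms is definable precisely when each of its members is $\Sigma$-pure-injective. Since the modules in $\textbf{I}_{1}$ are finite-dimensional they are pure-injective, and $\textup{Prod}(\textbf{I}_{1})$ is by construction closed under products, summands and isomorphisms and consists of pure-injective modules. Hence it remains only to check that every $M\in\textup{Prod}(\textbf{I}_{1})$ is $\Sigma$-pure-injective.

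This last step is the crux, and I would argue exactly as in the proof of Corollary~\ref{cor:4.7(Lemma 4.44, p.40)}. Write $M$ as a direct summand of a product $\prod_{i\in I}N_{i}$ with each $N_{i}\in\textbf{I}_{1}$. By Lenzing's result \cite[Theorem 4.6, p.750]{Len83} every direct sum of preinjective modules is $\Sigma$-pure-injective, so $\bigoplus_{i\in I}N_{i}$ is $\Sigma$-pure-injective. The product $\prod_{i\in I}N_{i}$ lies in the definable subcategory generated by $\bigoplus_{i\in I}N_{i}$, so by \cite[Proposition(4.4.12), p.176]{Pre09} it too is $\Sigma$-pure-injective; being a direct summand of it, $M$ is then $\Sigma$-pure-injective as well.

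Combining these observations, $\textup{Prod}(\textbf{I}_{1})$ is definable, and Lemma~\ref{lem:4.12} yields $\textup{fsc}(\textbf{I}_{1})=\overline{\textbf{I}_{1}}$. The only delicate point is the passage from direct sums to direct products of preinjective modules, which is exactly where the definable-closure argument together with \cite{Pre09} is needed, since Lenzing's theorem is stated for sums only; everything else is a routine application of the definability criterion and of Lemma~\ref{lem:4.12}.
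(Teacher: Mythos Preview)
Your proof is correct and, like the paper's, concludes by applying Lemma~\ref{lem:4.12} once $\textup{Prod}(\textbf{I}_{1})$ is known to be definable; the difference lies in how that definability is obtained. The paper cites \cite[Theorem 3.2, p.351]{Zay97} for the definability of $\textup{Prod}(\textbf{I})$, where $\textbf{I}$ is the full class of indecomposable preinjectives, and then invokes Remark~\ref{rem:4.13} to pass to the subclass $\textbf{I}_{1}$. You bypass both Zayed's theorem and Remark~\ref{rem:4.13}, instead verifying the $\Sigma$-pure-injectivity criterion directly via Lenzing's result \cite[Theorem 4.6, p.750]{Len83} together with the definable-closure argument already used in the proof of Corollary~\ref{cor:4.7(Lemma 4.44, p.40)}. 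Your route is more self-contained within the machinery developed in this paper, while the paper's route is shorter and also illustrates the general reduction principle encoded in Remark~\ref{rem:4.13}.
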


\begin{proof}
By \cite[Theorem 3.2, p.351]{Zay97}, Prod$(\emph{\textbf{I}})$ is
definable, where $\emph{\textbf{I}}$  is the class of
all indecomposable preinjective left $R$-modules. Since $\emph{\textbf{I}}_{1}\subseteq \emph{\textbf{I}}$
it follows from     Remark~\ref{rem:4.13}   that Prod$(\emph{\textbf{I}}_{1})$
is definable. By  Lemma~\ref{lem:4.12}, fsc$(\textbf{I}_{1})=\overline{\textbf{I}_{1}}.$\end{proof}

\begin{rem}\label{rem:4.15}
Let $R$ be a  finite-dimensional algebra over a field
$k$ and let $T\subseteq R$-\emph{ind}. Then $T$ is the class of all indecomposable
finite-dimensional left $R$-modules in \emph{Prod}$(T).$ This follows from \cite[Corollary 5.3.33, p.250]{Pre09}
\end{rem}

The following fact is known; it can be found stated in \cite[p.47]{Rin00}.
We include a proof here.

\begin{prop}\label{prop:4.16(Proposition 6.5, p.4)}
 Let R be a tame hereditary finite-dimensional
algebra over a field $k$ and let $\textbf{P}_{1}$ be a class of indecomposable preprojective left
R-modules. Then \emph{fsc}$(\textbf{P}_{1})=\textbf{P}_{1}.$  \end{prop}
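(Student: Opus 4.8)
The plan is to compute fsc$(\textbf{P}_{1})$ directly from its defining description fsc$(\textbf{P}_{1})=$Prod$(\textbf{P}_{1})\cap{}_{R}$pinj and to reduce the whole statement to Remark~\ref{rem:4.15}. The inclusion $\textbf{P}_{1}\subseteq$fsc$(\textbf{P}_{1})$ is immediate: each indecomposable preprojective module is finite-dimensional, hence indecomposable pure-injective, and lies in Prod$(\textbf{P}_{1})$ trivially. For the reverse inclusion I would show that every indecomposable pure-injective $N\in$Prod$(\textbf{P}_{1})$ is finite-dimensional; Remark~\ref{rem:4.15}, applied with $T=\textbf{P}_{1}\subseteq R$-ind, then forces $N\in\textbf{P}_{1}$, since it identifies $\textbf{P}_{1}$ with the class of all indecomposable finite-dimensional modules in Prod$(\textbf{P}_{1})$.

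So let $N\in$fsc$(\textbf{P}_{1})$, a nonzero indecomposable pure-injective module which is a direct summand of a product $\prod_{i\in I}X_{i}$ with each $X_{i}\in\textbf{P}_{1}$. The crucial observation is that the summand inclusion is a split monomorphism $\iota:N\to\prod_{i\in I}X_{i}$, and by the universal property of the product $\iota$ is given by its components $g_{i}=p_{i}\iota:N\to X_{i}$. Since $\iota$ is injective and $N\neq0$, not every $g_{i}$ can vanish, so there is a \emph{nonzero} homomorphism $g:N\to X$ into a preprojective module $X\in\textbf{P}_{1}$. This step is the heart of the argument: it converts the (a priori weak) membership of $N$ in Prod$(\textbf{P}_{1})$ into a concrete nonzero map into a preprojective module.

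It then remains to prove the key lemma that an indecomposable pure-injective left $R$-module admitting a nonzero homomorphism to a preprojective module is necessarily finite-dimensional preprojective. I would argue using the classification of indecomposable pure-injective modules over a tame hereditary algebra (see \cite{Rin98} and \cite{Pre09}): each is either finite-dimensional (preprojective, regular, or preinjective) or one of the Pr\"ufer, adic, and generic modules. Given the nonzero $g:N\to X$, the image im$(g)$ is a nonzero submodule of the preprojective module $X$; decomposing it into indecomposables and using the standard Hom-orthogonality $\textup{Hom}(\textup{regular},\textup{preprojective})=\textup{Hom}(\textup{preinjective},\textup{preprojective})=0$, each indecomposable summand maps nonzero into $X$ and so must be preprojective. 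Hence $N$ has a nonzero finite-dimensional preprojective quotient. Now one checks that none of the non-preprojective types admits such a quotient: a finite-dimensional regular or preinjective module has no preprojective quotient at all (again by orthogonality), a Pr\"ufer module and the generic module have no nonzero finite-dimensional quotient whatsoever (they are divisible-like, their proper quotients being again Pr\"ufer, respectively generic, or zero), and every finite-dimensional quotient of an adic module is regular. Thus $N$ must be finite-dimensional preprojective, and Remark~\ref{rem:4.15} yields $N\in\textbf{P}_{1}$.

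The main obstacle is precisely the exclusion of the infinite-dimensional pure-injectives, especially the generic and adic modules. These lie in the Ziegler closure of the preprojective component, so they cannot be separated from $\textbf{P}_{1}$ by pp-pairs or support data alone; correspondingly, Prod$(\textbf{P}_{1})$ is not definable here and Lemma~\ref{lem:4.12} is unavailable, which is exactly why fsc$(\textbf{P}_{1})$ can be strictly smaller than $\overline{\textbf{P}_{1}}$ (contrast the preinjective situation in Corollary~\ref{cor:4.14}). The split-monomorphism-into-a-product device above is what replaces the missing definability input: it produces an honest nonzero homomorphism into a preprojective module, at which point the Hom-orthogonality of a tame hereditary algebra kills every non-preprojective indecomposable pure-injective. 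I expect the only delicate point in a full write-up to be the clean verification that the generic and adic modules have no nonzero finite-dimensional preprojective quotient, which is where the representation theory of \cite{Rin98} is genuinely used.
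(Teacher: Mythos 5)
Your proposal follows the same skeleton as the paper's proof: from the split embedding of $N$ into $\prod_{i\in I}X_{i}$ you extract a nonzero component $g\colon N\to X$ with $X\in\textbf{P}_{1}$ (the paper does exactly this, choosing a nonzero element $a\in N$ and projecting to a coordinate where $a_{j}\neq0$), and you close with Remark~\ref{rem:4.15} just as the paper does. The divergence is in the exclusion step. The paper disposes of every non-preprojective possibility in one stroke by citing \cite[Lemma 1, p.46]{Cra98}: any module $M$ with ${\rm Hom}_{R}(M,\textbf{P})\neq0$ has a preprojective direct summand, so the indecomposable $M$ is itself preprojective, hence finite-dimensional. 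You instead run a case analysis over Ringel's classification of indecomposable pure-injectives, and that case analysis contains false statements, so there is a genuine gap.

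Concretely: it is not true that Pr\"{u}fer modules and the generic module have no nonzero finite-dimensional quotient, nor that every finite-dimensional quotient of an adic module is regular. Over the Kronecker algebra, $I_{0}$ is the simple injective representation with dimension vector $(0,1)$, and for any representation $M$ with spaces $(V_{1},V_{2})$ one has ${\rm Hom}_{R}(M,I_{0})\cong{\rm Hom}_{k}(V_{2},k)$, since any linear functional on $V_{2}$ together with the zero map on $V_{1}$ is a homomorphism; hence every module with $V_{2}\neq0$ --- in particular the generic module, every Pr\"{u}fer module and every adic module --- surjects onto the finite-dimensional preinjective module $I_{0}$. What your argument actually needs is only the weaker (and true) statement that these modules have no nonzero \emph{preprojective} quotient. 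For the Pr\"{u}fer and generic modules your divisibility idea can be repaired: they are divisible, quotients of divisible modules are divisible because $R$ is hereditary, and a nonzero finite-dimensional divisible module is preinjective, hence never preprojective. But adic modules are \emph{not} divisible (they are reduced), so this repair does not cover them, and your stated description of their quotients is wrong; excluding a preprojective quotient of an adic module requires a genuine argument (for instance via duality with the Pr\"{u}fer module over $R^{op}$, or via the results of \cite{Rin98}) --- which is precisely the work that the paper's citation of \cite[Lemma 1, p.46]{Cra98} performs. So the step you yourself flag as the delicate one is, as written, not established, and in the adic case not even correctly stated.
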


\begin{proof}

Let $M\in$fsc$(\emph{\textbf{P}}_{1})$.  Thus $M$ is a direct summand of $\underset{i\in I}{\prod}P_{i}$
where $P_{i}\in \emph{\textbf{P}}_{1}.$ Choose a non-zero element $a\in M,$ so $a_{j}\neq0$
for some $j\in I,$ where $a_{j}$ is the jth component in $a.$ Define
$\alpha:M\rightarrow P_{j}$ by $\alpha=\pi_{j}i$ where $i:M\rightarrow\underset{i\in I}{\prod}P_{i}$
is the inclusion and $\pi_{j}:\underset{i\in I}{\prod}P_{i}\rightarrow P_{j}$
is the projection. Since $\alpha(a)=a_{j}\neq0$ it follows that Hom$_{R}(M,\emph{\textbf{P}})\neq0,$
where $\emph{\textbf{P}}$ is the class of all indecomposable preprojective left $R$-modules.
By \cite[Lemma 1, p.46]{Cra98}, $M$ has a preprojective direct summand,
and  hence $M$ is  finite-dimensional  and therefore we have from Remark~\ref{rem:4.15} that $M\in \emph{\textbf{P}}_{1}.$

\end{proof}

\begin{lem}\label{lem:4.17(Lemma 6.9, p.7)} Let R be a tame hereditary finite-dimensional algebra
over a field k and let $\textbf{R}_{1}$ be a class of indecomposable regular
left R-modules. Then:

$\left(1\right)$ The generic module does not belong to \emph{fsc}$(\textbf{R}_{1})$.

$\left(2\right)$ There is no Pr\"{u}fer  R-module in \emph{fsc}$(\textbf{R}_{1})$.\end{lem}
\begin{proof}
$\left(1\right)$ Assume that the generic module $G\in$fsc$(\emph{\textbf{R}}_{1}),$
thus $G\in$Prod$(\emph{\textbf{R}}_{1}).$ As in the proof of Proposition~\ref{prop:4.16(Proposition 6.5, p.4)} it follows that  Hom$_{R}(G,\emph{\textbf{R}}_{1})\neq 0,$ contradicting
\cite[p.46]{Rin00}. Therefore  $G\notin$fsc$(\emph{\textbf{R}}_{1})$.

$\left(2\right)$ Assume that there is a Pr\"{u}fer module $M$
such that $M\in$fsc$(\emph{\textbf{R}}_{1}).$  By \cite[Proposition 3, p.110]{Rin98},
the generic module $G$ is a direct summand of $M^{I}$ for some $I$ so
$G\in$Prod$(\emph{\textbf{R}}_{1})$ and this contradicts $\left(1\right)$ above. Thus there is no Pr\"{u}fer  module in fsc$(\emph{\textbf{R}}_{1})$. \end{proof}

Let $R$ be a tame hereditary finite-dimensional algebra over a field
$k$ and let $S$ be a simple regular left $R$-module $($that is, a module which is simple in the category of regular modules$)$. We use $S[\infty]$
$($resp. $\hat{S})$ to denote the Pr\"{u}fer $($resp. adic$)$ left
$R$-module corresponding to $S,$ see \cite[p.106]{Rin98} for the definitions of these modules. Also, we use $T_{S}$ to denote the
class $T_{S}=\{M\mid M$ is an indecomposable regular left $R$-module
with Hom$_{R}(M,S)\neq0\}.$

\begin{thm}\label{thm:4.18(Theorem 6.10, p.8)}
 Let $R$ be a tame hereditary finite-dimensional
algebra over a field $k.$ Let $\textbf{R}_{1}$ be a class of indecomposable
regular left R-modules and let $S$ be a simple regular left $R$-module.
Then the following statements are equivalent:

$\left(1\right)$ $\hat{S}\in$\emph{fsc}$(\textbf{R}_{1}).$

$\left(2\right)$ $\textbf{R}_{1}\cap T_{S}$ is infinite.

$\left(3\right)$ $\hat{S}\in$\emph{fsc}$(\textbf{R}_{1}\cap T_{S}).$ \end{thm}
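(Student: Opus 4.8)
The plan is to prove the cycle of implications $(1)\Rightarrow(2)\Rightarrow(3)\Rightarrow(1)$, exploiting throughout the identification of $T_{S}$ with the ``ray'' of finite-dimensional regular modules lying below the adic module. Since $S$ is simple regular, for an indecomposable regular $M$ the condition $\mathrm{Hom}_{R}(M,S)\neq 0$ holds exactly when $S$ is the regular top of $M$; thus $T_{S}$ consists of the uniserial regular modules $[\ell]S$ of regular length $\ell$ with top $S$ (pairwise non-isomorphic, one for each $\ell\geq 1$), and these assemble into a tower of surjections $\cdots\twoheadrightarrow[\ell+1]S\twoheadrightarrow[\ell]S\twoheadrightarrow\cdots\twoheadrightarrow[1]S=S$ whose inverse limit is, by the definition of \cite{Rin98}, the adic module $\hat{S}$, a pure-injective module admitting a surjection onto $S$. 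I would record these facts first, as they drive both nontrivial implications.

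The implication $(3)\Rightarrow(1)$ is immediate from monotonicity of $\mathrm{Prod}$: since $\mathbf{R}_{1}\cap T_{S}\subseteq\mathbf{R}_{1}$ we have $\mathrm{fsc}(\mathbf{R}_{1}\cap T_{S})=\mathrm{Prod}(\mathbf{R}_{1}\cap T_{S})\cap{}_{R}\mathrm{pinj}\subseteq\mathrm{Prod}(\mathbf{R}_{1})\cap{}_{R}\mathrm{pinj}=\mathrm{fsc}(\mathbf{R}_{1})$, so $\hat{S}\in\mathrm{fsc}(\mathbf{R}_{1})$.

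For $(1)\Rightarrow(2)$ I would argue through the Ziegler topology. As observed in the proof of Lemma~\ref{lem:4.12}, one always has $\mathrm{fsc}(\mathbf{R}_{1})\subseteq\overline{\mathbf{R}_{1}}$, so from $\hat{S}\in\mathrm{fsc}(\mathbf{R}_{1})$ we deduce $\hat{S}\in\overline{\mathbf{R}_{1}}$. As every module in $\mathbf{R}_{1}$ is finite-dimensional, I would then invoke Ringel's description of the Ziegler spectrum of a tame hereditary algebra \cite{Rin98}: among sets of finite-dimensional indecomposables, $\hat{S}$ lies in the closure of $X$ precisely when $X$ meets $T_{S}$ in an infinite set, the members of the ray $T_{S}$ being the only finite-dimensional modules that accumulate at $\hat{S}$. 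Applied to $X=\mathbf{R}_{1}$ this yields that $\mathbf{R}_{1}\cap T_{S}$ is infinite.

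Finally, $(2)\Rightarrow(3)$ is where the real work lies. If $\mathbf{R}_{1}\cap T_{S}$ is infinite then, the $[\ell]S$ being pairwise non-isomorphic, it equals $\{[\ell]S:\ell\in\Lambda\}$ for an infinite $\Lambda\subseteq\mathbb{Z}^{+}$ that is cofinal in the tower, so $\hat{S}=\varprojlim_{\ell\in\Lambda}[\ell]S$. The key point to establish is that the canonical map $\hat{S}\to\prod_{\ell\in\Lambda}[\ell]S$ is a pure monomorphism; this rests on the transition maps being surjections of finite-length modules (whence $\varprojlim^{1}=0$ and $\hat{S}$ is the kernel of $1-\sigma$ on the product), the verification being supplied by \cite{Rin98} (alternatively one checks the relevant pp-conditions directly). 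Since $\hat{S}$ is pure-injective this pure monomorphism splits, so $\hat{S}$ is a direct summand of $\prod_{\ell\in\Lambda}[\ell]S$, i.e. $\hat{S}\in\mathrm{Prod}(\mathbf{R}_{1}\cap T_{S})\cap{}_{R}\mathrm{pinj}=\mathrm{fsc}(\mathbf{R}_{1}\cap T_{S})$. The main obstacle is exactly this product-summand realization of the adic module: proving purity of the inverse-limit embedding (equivalently, that a product of the finite-length modules of a single ray returns the adic module as a summand) is the technical heart, and it is what forces an appeal to the fine structure of the tube from \cite{Rin98} rather than to the general definability machinery, since $\mathrm{Prod}(\mathbf{R}_{1}\cap T_{S})$ need not be definable here.
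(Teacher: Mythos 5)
Your proposal is correct in outline and lands on the same final mechanism as the paper (realizing $\hat{S}$ as a direct summand of a product of members of the coray $T_{S}$), but it gets there by a genuinely different route, and one of your supporting claims is false as stated. For $\left(2\right)\Rightarrow\left(3\right)$ the paper never touches the inverse limit directly: it passes to the dual side, where $\mathrm{Hom}_{R}(M,S)\neq 0$ gives $\mathrm{Hom}_{R}(S^{*},M^{*})\neq 0$, invokes \cite[Proposition 1, p.107]{Rin98} to write the Pr\"{u}fer right module $S^{*}[\infty]$ as a direct limit of a chain of monomorphisms between members of $(\mathbf{R}_{1}\cap T_{S})^{*}$, uses Lenzing \cite{Len83} to get a \emph{pure}-exact sequence from a direct sum of such members onto $S^{*}[\infty]$, dualizes (the dual of a pure-exact sequence splits since duals are pure-injective), and identifies $\hat{S}=(S^{*}[\infty])^{*}$ via \cite{Cra98}. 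You instead work on the left side throughout: $\hat{S}=\varprojlim_{\ell\in\Lambda}[\ell]S$ along the surjective tower, purity of $\hat{S}\to\prod_{\ell\in\Lambda}[\ell]S$, then splitting by pure-injectivity of $\hat{S}$. Your purity claim is true, and your reasons (surjective transition maps, finite length) are the right ones: the solution sets of a pp condition with compatible parameters form a descending-image system of nonempty finite-dimensional cosets in the $[\ell]S$, so Mittag--Leffler gives a compatible witness in the inverse limit. What the paper's route buys is that this purity verification is outsourced to the standard result of Lenzing plus formal duality; what yours buys is a self-contained left-module argument, at the cost of having to actually write out (or precisely locate in \cite{Rin98}) the purity of the inverse-limit embedding rather than leaving it as ``supplied by \cite{Rin98}''.

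The genuine flaw is in your $\left(1\right)\Rightarrow\left(2\right)$. The criterion you attribute to \cite{Rin98} --- that among finite-dimensional indecomposables the members of $T_{S}$ are the \emph{only} points accumulating at $\hat{S}$ --- is false: preprojective modules also accumulate at adic modules. Indeed, dually to \cite[Corollary, p.113]{Rin98}, a Ziegler-closed set containing infinitely many preprojectives contains all but at most $n(R)-2$ adic modules (over the Kronecker algebra, all of them), so a set of preprojectives meeting $T_{S}$ in the empty set can still have $\hat{S}$ in its Ziegler closure. The statement you need is the restriction of this criterion to sets of \emph{regular} modules, which is exactly your situation since $\mathbf{R}_{1}$ is regular; but in that restricted form it is not an off-the-shelf quotation, it requires an argument, and this is precisely what the paper supplies: by \cite[Examples, p.42]{Cra98} the class $\{M\mid\mathrm{Hom}_{R}(M,S)=0\}$ is definable, so $C=\{M\in{}_{R}\mathrm{pinj}\mid\mathrm{Hom}_{R}(M,S)=0\}$ is Ziegler-closed; if $\mathbf{R}_{1}\cap T_{S}$ were finite then $C\cup(\mathbf{R}_{1}\cap T_{S})$ would be a Ziegler-closed (hence fsc-closed) superset of $\mathbf{R}_{1}$ not containing $\hat{S}$, because $\mathrm{Hom}_{R}(\hat{S},S)\neq 0$ and $\hat{S}$ is infinite-dimensional. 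You should replace your citation by this argument (note it works because every module of $\mathbf{R}_{1}$ with nonzero Hom to $S$ lies in $T_{S}$, which is where regularity of $\mathbf{R}_{1}$ enters); with that repair, and with the purity verification written out, your proof is complete.
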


\begin{proof}
$\left(1\right)\Rightarrow\left(2\right).$ Suppose that $\hat{S}\in$fsc$(\emph{\textbf{R}}_{1}).$
Assume that $\emph{\textbf{R}}_{1}\cap T_{S}$ is  finite. Let $D=\{M\mid$Hom$_{R}(M,S)=0\}.$
By \cite[Examples, p.42]{Cra98}, $D$ is a definable subclass of
$R$-Mod and hence $C=D\cap_{R}$pinj is a closed set in the Ziegler
topology. Since $\emph{\textbf{R}}_{1}\cap T_{S}$ is a finite class of finite-dimensional
indecomposable modules it follows from \cite[ 2.5]{Cra98} that $\emph{\textbf{R}}_{1}\cap T_{S}$
is a closed set in the Ziegler topology and hence $C\cup(\emph{\textbf{R}}_{1}\cap T_{S})$
is. Thus $C\cup(\emph{\textbf{R}}_{1}\cap T_{S})$ is a closed set in the full
support topology. Since $\emph{\textbf{R}}_{1}\subseteq C\cup(\emph{\textbf{R}}_{1}\cap T_{S})$
it follows that fsc$(\emph{\textbf{R}}_{1})\subseteq$fsc$(C\cup(\emph{\textbf{R}}_{1}\cap T_{S}))=C\cup(\emph{\textbf{R}}_{1}\cap T_{S}).$
Since Hom$_{R}(\hat{S},S)\neq0$ it follows that $\hat{S}\notin C\cup(\emph{\textbf{R}}_{1}\cap T_{S})$
and hence $\hat{S}\notin$fsc$(\emph{\textbf{R}}_{1})$ and this contradicts the hypothesis.
Thus $\emph{\textbf{R}}_{1}\cap T_{S}$ is infinite.

$\left(2\right)\Rightarrow\left(3\right)$ Suppose that $\emph{\textbf{R}}_{1}\cap T_{S}$
is infinite, thus $(\emph{\textbf{R}}_{1}\cap T_{S})^{*}$ is an infinite class
of regular right $R$-modules.
Let $X\in(\emph{\textbf{R}}_{1}\cap T_{S})^{*},$ thus $X=M^{*}$ for some $M\in \emph{\textbf{R}}_{1}\cap T_{S}.$
 Hence Hom$_{R}(M,S)\neq0.$ Thus  Hom$_{R}(S^{*},X)\neq0$ for all $X\in(\emph{\textbf{R}}_{1}\cap T_{S})^{*}.$
By \cite[Proposition 1, p.107]{Rin98}, $S^{*}[\infty]$ is the direct
limit of a chain of monomorphisms $X_{1}\rightarrow X_{2}\rightarrow X_{3}\rightarrow\cdots$
with $X_{i}\in(\emph{\textbf{R}}_{1}\cap T_{S})^{*}.$ Therefore, by \cite[Proposition 2.1, p.736]{Len83},
there is a pure exact sequence $0\rightarrow N\rightarrow\underset{j\in J}{\bigoplus}Y_{j}\rightarrow S^{*}[\infty]\rightarrow0$
with $Y_{j}\in(\emph{\textbf{R}}_{1}\cap T_{S})^{*}.$ Therefore
the exact sequence $0\rightarrow(S^{*}[\infty])^{*}\rightarrow(\underset{j\in J}{\bigoplus}Y_{j})^{*}\rightarrow N^{*}\rightarrow0$
is split. By \cite[Examples, p.44]{Cra98}, $\hat{S}=(S^{*}[\infty])^{*}$ and hence $\hat{S}$ is a direct summand
of $\underset{j\in J}{\prod}N_{j}^{*}.$ Thus $\hat{S}\in$Prod$(\emph{\textbf{R}}_{1}\cap T_{S})$
and this implies that $\hat{S}\in$fsc$(\emph{\textbf{R}}_{1}\cap T_{S}).$

$\left(3\right)\Rightarrow\left(1\right)$. This is obvious.
\end{proof}

\begin{cor}\label{cor:4.19(Corollary (6.11), p.10)}   Let $R$ be a tame hereditary finite-dimensional
algebra over a field $k$ and let $\textbf{R}_{1}$ be a class of indecomposable
regular left R-modules. Then \emph{fsc}$(\textbf{R}_{1})=\textbf{R}_{1}\cup\{\hat{S}\mid \textbf{R}_{1}\cap T_{S}$
is infinite$\}.$ \end{cor}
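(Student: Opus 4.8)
The plan is to exploit the definition recalled just before Proposition~\ref{prop:4.10(prop 4.49, p.44)}, namely that $\textup{fsc}(\textbf{R}_{1})=\textup{Prod}(\textbf{R}_{1})\cap{}_{R}\textup{pinj}$, so that every module in $\textup{fsc}(\textbf{R}_{1})$ is an indecomposable pure-injective left $R$-module. Over a tame hereditary finite-dimensional algebra these are completely classified (this is the input from \cite{Rin98} and \cite{Pre09} used throughout this section): they are the finite-dimensional indecomposables, the adic modules $\hat{S}$, the Pr\"{u}fer modules $S[\infty]$, and the generic module $G$. The whole proof then amounts to deciding, family by family, which of these indecomposable pure-injectives lie in $\textup{fsc}(\textbf{R}_{1})$, and collecting the answers.

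First I would settle the finite-dimensional part. Each module in $\textbf{R}_{1}$ is finite-dimensional, hence pure-injective, and lies in $\textup{Prod}(\textbf{R}_{1})$, so $\textbf{R}_{1}\subseteq\textup{fsc}(\textbf{R}_{1})$. Conversely, since $\textbf{R}_{1}\subseteq R$-ind, Remark~\ref{rem:4.15} tells us that the only indecomposable finite-dimensional modules in $\textup{Prod}(\textbf{R}_{1})$ are precisely those of $\textbf{R}_{1}$. Thus the finite-dimensional indecomposable pure-injectives in $\textup{fsc}(\textbf{R}_{1})$ are exactly the members of $\textbf{R}_{1}$.

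Next I would dispose of the infinite-dimensional indecomposables. The generic module is excluded by Lemma~\ref{lem:4.17(Lemma 6.9, p.7)}(1), and no Pr\"{u}fer module can occur by Lemma~\ref{lem:4.17(Lemma 6.9, p.7)}(2). For the adic modules, Theorem~\ref{thm:4.18(Theorem 6.10, p.8)} supplies exactly the criterion needed: $\hat{S}\in\textup{fsc}(\textbf{R}_{1})$ if and only if $\textbf{R}_{1}\cap T_{S}$ is infinite. Putting the three families together with the finite-dimensional count gives
\[
\textup{fsc}(\textbf{R}_{1})=\textbf{R}_{1}\cup\{\hat{S}\mid\textbf{R}_{1}\cap T_{S}\textup{ is infinite}\},
\]
which is the assertion.

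As for difficulty: all the genuinely technical work has already been carried out in Lemma~\ref{lem:4.17(Lemma 6.9, p.7)} and Theorem~\ref{thm:4.18(Theorem 6.10, p.8)}, so this corollary is in essence a bookkeeping argument that runs through the classification of indecomposable pure-injectives. The one point that needs care is to be certain the classification is exhaustive — that every indecomposable pure-injective falls into one of the four families listed — so that partitioning $\textup{fsc}(\textbf{R}_{1})$ according to them leaves nothing out; this is where the reliance on the tame hereditary structure theory is essential.
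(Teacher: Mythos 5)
Your proposal is correct and follows essentially the same route as the paper: the paper's proof simply cites Theorem~\ref{thm:4.18(Theorem 6.10, p.8)}, Remark~\ref{rem:4.15} and Lemma~\ref{lem:4.17(Lemma 6.9, p.7)}, which is exactly the case analysis you carry out (finite-dimensional part via Remark~\ref{rem:4.15}, exclusion of generic and Pr\"{u}fer modules via Lemma~\ref{lem:4.17(Lemma 6.9, p.7)}, adic criterion via Theorem~\ref{thm:4.18(Theorem 6.10, p.8)}). Your explicit appeal to the classification of indecomposable pure-injectives over a tame hereditary algebra is the ingredient the paper leaves implicit, and you are right that exhaustiveness of that classification is the one point requiring care.
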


\begin{proof}
This follows  by Theorem~\ref{thm:4.18(Theorem 6.10, p.8)},   Remark~\ref{rem:4.15} and  Lemma~\ref{lem:4.17(Lemma 6.9, p.7)}.  \end{proof}

In the following corollary we give a  complete description of the closure
of any subclass of $R$-ind in the full support topology and hence, by Theorem~\ref{thm:4.5}, a description of the indecomposable $S$-pure-injective modules for any purity defined by a class $S$ of finitely presented modules.

\begin{cor}\label{cor:4.20(Corollary (6.12), p.10)}
 Let $R$ be a tame hereditary finite-dimensional
algebra over a field $k.$ Let $\textbf{I}_{1}$ $($resp. $\textbf{P}_{1},$ resp.
$\textbf{R}_{1})$ be a class of indecomposable preinjective $($resp. preprojective,
resp. regular$)$ left R-modules. Then \emph{fsc}$(\textbf{I}_{1}\cup \textbf{P}_{1}\cup \textbf{R}_{1})=\overline{\textbf{I}_{1}}\cup \textbf{P}_{1}\cup \textbf{R}_{1}\cup\{\hat{S}\mid \textbf{R}_{1}\cap T_{S}$
is infinite$\}.$\end{cor}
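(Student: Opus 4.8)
The plan is to reduce the statement to the three special cases already established, namely \textup{fsc}$(\textbf{I}_{1})=\overline{\textbf{I}_{1}}$ (Corollary~\ref{cor:4.14}), \textup{fsc}$(\textbf{P}_{1})=\textbf{P}_{1}$ (Proposition~\ref{prop:4.16(Proposition 6.5, p.4)}), and \textup{fsc}$(\textbf{R}_{1})=\textbf{R}_{1}\cup\{\hat{S}\mid \textbf{R}_{1}\cap T_{S}\text{ is infinite}\}$ (Corollary~\ref{cor:4.19(Corollary (6.11), p.10)}). The engine is the observation that the full support closure is additive over the three-fold union, i.e. \textup{fsc}$(\textbf{I}_{1}\cup\textbf{P}_{1}\cup\textbf{R}_{1})=\textup{fsc}(\textbf{I}_{1})\cup\textup{fsc}(\textbf{P}_{1})\cup\textup{fsc}(\textbf{R}_{1})$; substituting the three known values then gives the asserted equality. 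The inclusion $\supseteq$ is immediate, since \textup{fsc}$(T)=\textup{Prod}(T)\cap{}_{R}\textup{pinj}$ is monotone in $T$, so each of the three closures, and hence their union, is contained in \textup{fsc}$(\textbf{I}_{1}\cup\textbf{P}_{1}\cup\textbf{R}_{1})$.

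For the reverse inclusion I would take $M\in\textup{fsc}(\textbf{I}_{1}\cup\textbf{P}_{1}\cup\textbf{R}_{1})$, so that $M$ is an indecomposable pure-injective module which is a direct summand of a product $\prod_{i\in J}N_{i}$ with each $N_{i}\in\textbf{I}_{1}\cup\textbf{P}_{1}\cup\textbf{R}_{1}$. First I would partition $J=J_{I}\sqcup J_{P}\sqcup J_{R}$ according to the type of $N_{i}$ and regroup, using that reindexing a (possibly infinite) product by a finite partition of the index set yields the product of the sub-products, namely the \emph{finite} direct sum $\left(\prod_{J_{I}}N_{i}\right)\oplus\left(\prod_{J_{P}}N_{i}\right)\oplus\left(\prod_{J_{R}}N_{i}\right)$. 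Now $M$, being an indecomposable pure-injective module, has local endomorphism ring, and a module with local endomorphism ring that is a direct summand of a finite direct sum is a direct summand of one of the terms: writing a splitting as $1_{M}=\sum_{t}g_{t}f_{t}$ in the local ring $\textup{End}_{R}(M)$, at least one $g_{t}f_{t}$ must be a unit, whence the corresponding $f_{t}$ is a split monomorphism. Consequently $M$ is a direct summand of exactly one of the three sub-products, so $M\in\textup{Prod}(\textbf{I}_{1})$, $M\in\textup{Prod}(\textbf{P}_{1})$, or $M\in\textup{Prod}(\textbf{R}_{1})$; since $M$ is pure-injective this places $M$ in \textup{fsc}$(\textbf{I}_{1})$, \textup{fsc}$(\textbf{P}_{1})$, or \textup{fsc}$(\textbf{R}_{1})$ respectively.

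Combining the two inclusions and inserting Corollary~\ref{cor:4.14}, Proposition~\ref{prop:4.16(Proposition 6.5, p.4)} and Corollary~\ref{cor:4.19(Corollary (6.11), p.10)} then yields \textup{fsc}$(\textbf{I}_{1}\cup\textbf{P}_{1}\cup\textbf{R}_{1})=\overline{\textbf{I}_{1}}\cup\textbf{P}_{1}\cup\textbf{R}_{1}\cup\{\hat{S}\mid \textbf{R}_{1}\cap T_{S}\text{ is infinite}\}$. The one delicate point, which at first looks like the obstacle, is the possibility that the indecomposable summand $M$ is spread across factors of different types so that no single type-block captures it; the local endomorphism ring of an indecomposable pure-injective is precisely what excludes this and lets the product be analysed one block at a time (this uniform treatment also absorbs the finite-dimensional summands, so no separate appeal to Remark~\ref{rem:4.15} is needed). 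Everything else is bookkeeping, the only care being the regrouping of an arbitrary product into a three-term direct sum to which the local-ring argument applies.
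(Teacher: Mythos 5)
Your proof is correct and takes essentially the same route as the paper: the paper's proof consists precisely of the reduction to Corollary~\ref{cor:4.14}, Proposition~\ref{prop:4.16(Proposition 6.5, p.4)} and Corollary~\ref{cor:4.19(Corollary (6.11), p.10)}, with the distributivity of \emph{fsc} over the finite union left implicit. Your exchange argument (an indecomposable pure-injective has local endomorphism ring, so a direct summand of a finite direct sum is a direct summand of one of the blocks) correctly supplies exactly that implicit step, and only ``at least one'' block, not ``exactly one,'' is needed there.
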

\begin{proof}
This follows from Corollary~\ref{cor:4.14}, Proposition~\ref{prop:4.16(Proposition 6.5, p.4)}
and Corollary~\ref{cor:4.19(Corollary (6.11), p.10)}.\end{proof}

\section{Rings whose indecomposable modules are $S$-pure-projective.}

\subparagraph*{\textmd{Let $T$ be a set. A family $F$ of subsets of $T$ is said
to be directed if for any $U,V\in F,$ there exists $W\in F$ such
that $U\subseteq W$ and $V\subseteq W.$ }}

\subparagraph*{\textmd{By using Theorem~\ref{thm:(1.3),2report,p3}, we can prove the following lemma.}}

\begin{lem}\label{lem:5.1(Lemma 4.16, p.10)}
 Let S  be a class of finitely presented left R-modules  and
let $\{N_{i}\}_{i\in I}$ be any directed family of $S$-pure
submodules of a left R-module M. Then $N=\underset{i\in I}{\bigcup}N_{i}$
is an  $S$-pure  submodule of M.\end{lem}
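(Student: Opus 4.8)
The plan is to verify directly that the union $N=\bigcup_{i\in I}N_i$ is $S$-pure in $M$ by checking the solvability-of-equations criterion, namely condition $(2)$ of Theorem~\ref{thm:(1.3),2report,p3}, which characterises $S$-purity in terms of matrix equations. Since the union of a directed family of submodules is again a submodule, $N$ is a submodule of $M$; what remains is purity. Let $\mathcal H$ be a set of matrices with $L_{\mathcal H}$-purity equal to $S$-purity, as provided by the setup of Theorem~\ref{thm:(1.3),2report,p3}. Fix an $n\times m$ matrix $H\in\mathcal H$ and an element $\bar a\in N^n$, and suppose the equation $H\bar x=\bar a$ has a solution $\bar b\in M^m$. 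I must produce a solution lying in $N^m$.

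The key step is a finiteness-and-directedness argument. The tuple $\bar a$ has only finitely many entries $a_1,\dots,a_n$, each lying in $N=\bigcup_i N_i$, so each $a_\ell$ belongs to some $N_{i_\ell}$. Because the family $\{N_i\}_{i\in I}$ is directed, there is a single index $i_0\in I$ with $a_\ell\in N_{i_0}$ for all $\ell$; that is, $\bar a\in N_{i_0}^n$. Now apply the hypothesis that $N_{i_0}$ is an $S$-pure submodule of $M$: by condition $(2)$ of Theorem~\ref{thm:(1.3),2report,p3} applied to the $S$-pure inclusion $N_{i_0}\hookrightarrow M$, since $H\bar x=\bar a$ is solvable in $M^m$ and $\bar a\in N_{i_0}^n$, the equation $H\bar x=\bar a$ already has a solution $\bar b'\in N_{i_0}^m$. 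Since $N_{i_0}\subseteq N$, this $\bar b'$ lies in $N^m$, which is exactly what condition $(2)$ demands for the inclusion $N\hookrightarrow M$.

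Having verified condition $(2)$ for the inclusion $N\hookrightarrow M$ for every $H\in\mathcal H$, the equivalence $(1)\Leftrightarrow(2)$ in Theorem~\ref{thm:(1.3),2report,p3} yields that the short exact sequence $0\to N\to M\to M/N\to 0$ is $S$-pure, so $N$ is an $S$-pure submodule of $M$. I expect the main technical obstacle to be a purely bookkeeping one: one must be careful that the criterion $(2)$ of Theorem~\ref{thm:(1.3),2report,p3} is phrased for an abstract $S$-pure monomorphism $f\colon A\to B$ with $\bar a\in A^n$, and here I am applying it twice with different ambient "$A$" (once with $A=N_{i_0}$ to extract the solution, once with $A=N$ to conclude), so I should phrase the inclusions and the element $\bar a$ consistently. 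The directedness hypothesis is precisely what makes the single index $i_0$ available and is therefore indispensable; without it one could not pull the finitely many entries of $\bar a$ into one member of the family.
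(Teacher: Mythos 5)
Your proof is correct and is essentially the paper's intended argument: the paper omits the proof, noting only that the lemma follows ``by using Theorem~\ref{thm:(1.3),2report,p3},'' and your verification of the matrix-equation criterion (condition $(2)$) --- pulling the finitely many entries of $\bar a$ into a single $N_{i_0}$ by directedness, solving there by $S$-purity of $N_{i_0}$, and reading the solution back in $N$ --- is precisely the standard way to carry this out. The bookkeeping point you flag (applying condition $(2)$ once with $A=N_{i_0}$ and once with $A=N$, both inside the same ambient $M$) is handled correctly, so there is no gap.
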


Let $N$  be a submodule of a left $R$-module $M$
and let $T$ be a set of submodules of $M.$ We will use $N(T)$ to
denote the submodule $N(T)=N+\underset{A\in T}{\sum}A.$ The next lemma follows using Lemma~\ref{lem:5.1(Lemma 4.16, p.10)}.

\begin{lem}\label{lem:5.2(Lemma 4.18, p.12)}
 Let S  be a class of finitely presented left R-modules,
let $N$ be a submodule of a left $R$-module $M$ and let $T$ be
a set of submodules of $M.$ If $N(F)$ is an S-pure
submodule of M for all finite subsets F of T, then $N(T)$ is an S-pure
submodule of M. \end{lem}

\begin{defn}\label{defn:5.3(Definition 4.19, p.13)}
 Let   S  be a class of finitely presented left R-modules,  let $N$ be a submodule of a left $R$-module $M$ and let $T_{0}$
be the set of all indecomposable submodules of $M.$ A subclass $T\subseteq T_{0}$
said to be $S$-$N$-independent $($in $M)$ if $N(T)=N\oplus(\underset{B\in T}{\sum}\oplus B)$
and $N(T)$ is S-pure submodule of $M.$  This will be the case if and only if every finite subset of $T$ is $S$-$N$-independent $M$. \end{defn}

\begin{thm}\label{thm:5.5(Theorem 4.22, p.16)}
 Let S be any set of finitely presented left R-modules
and let M be a left R-module. Suppose that every S-pure submodule
$M_{0}$ of M for which $M/M_{0}$ is  indecomposable is a direct
summand of M. Then every S-pure  submodule of M is a direct
summand of M and M is a direct sum of indecomposable submodules. \end{thm}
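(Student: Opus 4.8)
The plan is to reduce both conclusions to the single claim that a \emph{maximal} $S$-$N$-independent family exhausts $M$. Let $N$ be an $S$-pure submodule of $M$. Using Definition~\ref{defn:5.3(Definition 4.19, p.13)} together with Lemma~\ref{lem:5.2(Lemma 4.18, p.12)}, the $S$-$N$-independent families $T\subseteq T_{0}$ form a poset in which the union of a chain is again $S$-$N$-independent, since every finite subset of the union lies in one member of the chain and is therefore $S$-$N$-independent. Zorn's Lemma then yields a maximal $S$-$N$-independent family $T$; write $M'=N(T)=N\oplus(\bigoplus_{B\in T}B)$, which is $S$-pure in $M$. Everything reduces to proving $M'=M$: granting this, $M=N\oplus(\bigoplus_{B\in T}B)$ exhibits $N$ as a direct summand, and specialising to the submodule $N=0$ (which is $S$-pure) shows $M=\bigoplus_{B\in T}B$ is a direct sum of indecomposables.

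So I would suppose for contradiction that $M'\subsetneq M$ and fix $x\in M\setminus M'$. The first step is to manufacture an $S$-pure submodule with indecomposable quotient. Consider the $S$-pure submodules $M_{0}$ of $M$ with $M'\subseteq M_{0}$ and $x\notin M_{0}$. This family is nonempty (it contains $M'$), and by Lemma~\ref{lem:5.1(Lemma 4.16, p.10)} the directed union of a chain of such submodules is again $S$-pure, still contains $M'$ and still omits $x$; hence Zorn's Lemma provides a maximal such $M_{0}$.

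The crux, and the step I expect to be the main obstacle, is to show that maximality forces $M/M_{0}$ to be indecomposable. Suppose instead $M/M_{0}=U/M_{0}\oplus V/M_{0}$ with $U,V$ properly containing $M_{0}$. Each of $U/M_{0}$ and $V/M_{0}$, being a direct summand, is $S$-pure in $M/M_{0}$, and since $M_{0}$ is $S$-pure in $M$, transitivity of $S$-purity (readily checked from the solvability criterion of Theorem~\ref{thm:(1.3),2report,p3}(2)) makes both $U$ and $V$ $S$-pure in $M$. As $U,V\supsetneq M_{0}$ and each contains $M'$, maximality of $M_{0}$ forces $x\in U$ and $x\in V$, whence $x\in U\cap V=M_{0}$, a contradiction. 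Thus $M/M_{0}$ is indecomposable.

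Finally, the hypothesis applies to $M_{0}$: it is a direct summand, say $M=M_{0}\oplus C$ with $C\cong M/M_{0}$ indecomposable. Since $M'\subseteq M_{0}$ is $S$-pure in $M$, it is $S$-pure in $M_{0}$ (downward restriction of purity), and therefore $M'\oplus C$ is $S$-pure in $M_{0}\oplus C=M$ (purity is preserved under adding a common direct summand). As $C$ is indecomposable with $C\cap M'\subseteq C\cap M_{0}=0$, the family $T\cup\{C\}$ is $S$-$N$-independent and strictly larger than $T$, contradicting maximality. Hence $M'=M$, which completes the proof. The only routine ingredients are the three stability properties of $S$-purity used above (downward restriction, transitivity, and preservation under a common summand), each of which follows immediately from the matrix-equation description in Theorem~\ref{thm:(1.3),2report,p3}.
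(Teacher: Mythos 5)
Your proof is correct and follows essentially the same strategy as the paper's: two applications of Zorn's lemma (one to $S$-$N$-independent families via the finite-character property of Definition~\ref{defn:5.3(Definition 4.19, p.13)}, one to $S$-pure submodules avoiding a fixed element via Lemma~\ref{lem:5.1(Lemma 4.16, p.10)}), the same argument that maximality forces $M/M_{0}$ indecomposable using transitivity of $S$-purity, and the same use of the hypothesis plus the common-summand stability of purity to enlarge the independent family. The only difference is organizational --- the paper first proves the extension step for an arbitrary proper $S$-pure submodule and then runs the maximal-family argument, whereas you fix the maximal family first and inline the extension step as a contradiction --- which does not change the mathematical content.
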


\begin{proof}$($The following proof is based on an argument in \cite[Proposition 1.13, p.53]{Azu92}$).$
Let $N$ be any $S$-pure submodule of $M.$ If $N=M$ then $N$ is
a direct summand of $M.$ Assume that $N\neq M,$ thus there is $x\in M$\textbackslash{}$N.$
Let $F=\{K\mid N\subseteq K,$ $x\notin K$ and $K$ is an $S$-pure
submodule of $M\}.$ Since $N\in F$ it follows that $F$ is a non-empty
family. Let $\{M_{i}\}_{i\in I}$ be any directed subfamily of $F$
and let $A=\underset{i\in I}{\bigcup}M_{i}.$ It is clear that $N\subseteq A$
and $x\notin A.$ By Lemma~\ref{lem:5.1(Lemma 4.16, p.10)}, $A$ is an $S$-pure
submodule of $M$ and hence $A\in F.$ By Zorn's lemma, $F$ has a
maximal element, say $M_{0},$ thus $M_{0}$ is an $S$-pure submodule
of $M$ with $N\subseteq M_{0}$ and $x\notin M_{0}.$ We will prove
that $M/M_{0}$ is  indecomposable.

Assume that $M/M_{0}$ is not
indecomposable, thus there are two non-zero submodules $M_{1}/$ $M_{0},$
$M_{2}/M_{0}$ of $M/M_{0}$ such that $M/M_{0}=(M_{1}/M_{0})\oplus(M_{2}/M_{0}).$
  Therefore  $M_{0}\underset{\neq}{\subset}M_{1}$, $M_{0}\underset{\neq}{\subset}M_{2}$ and $M_{1}\cap M_{2}=M_{0}.$
Since $M_{1}/M_{0}$ and $M_{2}/M_{0}$ are direct summands of $M/M_{0}$
 they  are $S$-pure submodules
of $M/M_{0}.$ Since $M_{0}$ is an $S$-pure submodule of $M$ it
follows from \cite[33.3(4), p.276]{Wis91} that $M_{1}$ and $M_{2}$
are $S$-pure submodules of $M.$ Thus, by maximality of $M_{0},$
we have that $x\in M_{1}\cap M_{2}$ and this is a contradiction.

Hence $M/M_{0}$ is a non-zero indecomposable left $R$-module. By
assumption, $M_{0}$ is a direct summand of $M,$ say  $M=N_{0}\oplus M_{0}.$ Thus
$N_{0}\simeq M/M_{0}$  is a non-zero indecomposable
submodule of $M$ with   $N+N_{0}=N\oplus N_{0}.$
Since $N$ is an $S$-pure submodule of $M$ and $N\subseteq M_{0}\subseteq M$
it follows that $N$ is $S$-pure submodule of $M_{0}$
and hence $N\oplus N_{0}$ is an $S$-pure submodule of $N_{0}\oplus M_{0}=M.$
Thus, for any proper $S$-pure submodule $N$ of $M,$ there exists
a non-zero indecomposable submodule $N_{0}$ of $M$ such that $N\cap N_{0}=0$
 and $N\oplus N_{0}$ is an $S$-pure submodule
of $M.$

Let $T$ be the family of all $S$-$N$-independent subsets
in $M.$ Since $\{0\}\in T$ it follows that $T$ is  non-empty.
Let $D$ be any directed subfamily of $T$ and let $U$ be the union
of all members of $D.$ Then  $U\in T$  since every finite subset of $U$ is  $S$-$N$-independent.  By Zorn's lemma, $T$ has a maximal element,
say $W.$ Now we will prove that $N(W)=M.$ Assume that $N(W)\neq M,$
thus $N(W)$ is a proper $S$-pure submodule of $M.$ Hence there
exists a non-zero indecomposable submodule $B$ of $M$ such that
$N(W)\cap B=0$ and $N(W)+B=N(W)\oplus B=N\oplus(\underset{A\in W}{\sum}\oplus A)\oplus B$
is an $S$-pure submodule of $M,$ as seen above. Hence $W\cup\{B\}$ properly contains $W$ and is  $S$-$N$-independent in $M.$ This contradicts the maximality of $W$
in $T.$ Therefore, $N(W)=M.$ Since $N(W)=N\oplus(\underset{A\in W}{\sum}\oplus A)$
it follows that $N$ is a direct summand of $M$ and $M/N\simeq\underset{A\in W}{\sum}\oplus A$
is a direct sum of indecomposable submodules. If we take $N=0$ then we see that
$M$ is a direct sum of indecomposable submodules.\end{proof}

\begin{cor}\label{cor:5.6(Theorem 4.24, proposition 4.25, Corollary 4.25(a), p.21(a))}
 Let S be any set of finitely presented
left R-modules. Then the following statements are equivalent:

$\left(1\right)$ Every indecomposable left R-module is S-pure-projective.

$\left(2\right)$ For any left R-module M, every S-pure  submodule
of M is a direct summand of M.

$\left(3\right)$ Every left R-module is S-pure-projective.

$\left(4\right)$ Every left R-module is S-pure-injective.

$\left(5\right)$ Every left R-module is a direct sum of modules in
\textup{ind}$(S\cup\{_{R}R\})$.

\end{cor}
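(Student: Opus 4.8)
The plan is to run the core cycle $(1)\Rightarrow(2)\Rightarrow(3)\Rightarrow(1)$ together with $(2)\Leftrightarrow(4)$, and then to treat $(5)$ separately via $(5)\Rightarrow(3)$ and $(3)\Rightarrow(5)$. The heart of the first block is the observation that $(2),(3),(4)$ all assert the same thing, namely that \emph{every $S$-pure exact sequence of left $R$-modules splits}. Condition $(2)$ is literally this statement phrased for submodules; if every $S$-pure sequence splits then every module is trivially both projective and injective relative to such sequences, giving $(3)$ and $(4)$; conversely, given an $S$-pure sequence $0\to A\to B\to C\to0$, condition $(3)$ makes $C$ projective relative to it and condition $(4)$ makes $A$ injective relative to it, either of which forces a splitting. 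The implication $(3)\Rightarrow(1)$ is trivial, and $(5)\Rightarrow(3)$ is immediate from Proposition~\ref{cor:1.18(2.5),2report,p.11}(2): each module in $\textup{ind}(S\cup\{_RR\})$ is a direct summand of a module in $S\cup\{_RR\}$, hence lies in $\textup{Add}(S\cup\{_RR\})$, and since this class is closed under arbitrary direct sums and summands, any direct sum of such modules is $S$-pure-projective.

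The one genuinely nontrivial starting implication is $(1)\Rightarrow(2)$, for which I would invoke Theorem~\ref{thm:5.5(Theorem 4.22, p.16)} directly. Given $(1)$, let $M$ be any module and $M_0$ an $S$-pure submodule with $M/M_0$ indecomposable; then $M/M_0$ is $S$-pure-projective, so the $S$-pure sequence $0\to M_0\to M\to M/M_0\to0$ splits and $M_0$ is a direct summand of $M$. Theorem~\ref{thm:5.5(Theorem 4.22, p.16)} then yields that \emph{every} $S$-pure submodule of $M$ is a summand, which is $(2)$; crucially, the same theorem also tells us that every module is a direct sum of indecomposable submodules, a fact I will reuse for $(5)$.

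For $(3)\Rightarrow(5)$ the strategy is to upgrade the abstract description $R\textup{-Mod}=\textup{Add}(S\cup\{_RR\})$ to the concrete one in terms of $\textup{ind}(S\cup\{_RR\})$. First I would show $R\textup{-mod}\subseteq\textup{add}(S\cup\{_RR\})$: a finitely presented module is finitely generated, and a finitely generated direct summand of a direct sum $\bigoplus_i T_i$ is already a summand of a finite subsum (restrict the ambient projection to the finite subsum containing a generating set), so membership in $\textup{Add}(S\cup\{_RR\})$ collapses to membership in $\textup{add}(S\cup\{_RR\})$. By the comparison result \cite[Theorem 2.5, p.2136]{PuPrRo99} quoted in the introduction, $R\textup{-mod}\subseteq\textup{add}(S\cup\{_RR\})$ forces $S$-purity to coincide with purity; combined with $(4)$ this makes every left $R$-module pure-injective, i.e. $R$ left pure-semisimple, so every indecomposable left module has a local endomorphism ring. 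Now take any $M$ and write $M=\bigoplus_\alpha B_\alpha$ with each $B_\alpha$ indecomposable (Theorem~\ref{thm:5.5(Theorem 4.22, p.16)}); each module of $S\cup\{_RR\}$ is finitely presented, hence a finite direct sum of indecomposables lying in $\textup{ind}(S\cup\{_RR\})$, and by Proposition~\ref{cor:1.18(2.5),2report,p.11} together with $S$-pure-projectivity each $B_\alpha$ is a direct summand of a direct sum of such modules. Since all modules involved have local endomorphism rings, the Krull--Remak--Schmidt--Azumaya theorem identifies $B_\alpha$ with one of these indecomposables, so $B_\alpha\in\textup{ind}(S\cup\{_RR\})$ and $(5)$ follows.

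The main obstacle is precisely this last identification of indecomposable summands. Decomposing an arbitrary module into indecomposables is cheap once Theorem~\ref{thm:5.5(Theorem 4.22, p.16)} is available, but recognising those summands as genuinely belonging to $\textup{ind}(S\cup\{_RR\})$ requires controlling indecomposable summands of possibly infinite direct sums, which is exactly what fails without local endomorphism rings. The work therefore concentrates in establishing those local endomorphism rings; the route above secures them through the finite-generation reduction $R\textup{-mod}\subseteq\textup{add}(S\cup\{_RR\})$ and the purity-comparison theorem, after which Azumaya's decomposition theory (the same circle of ideas as \cite[Proposition 1.13, p.53]{Azu92} underlying Theorem~\ref{thm:5.5(Theorem 4.22, p.16)}) completes the argument.
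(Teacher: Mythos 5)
Your proposal is correct, and it follows the paper for the opening moves---$(1)\Rightarrow(2)$ via Theorem~\ref{thm:5.5(Theorem 4.22, p.16)} and the splitting argument for $(2)\Rightarrow(3)$---but it diverges genuinely at the two places where real content is needed. First, you obtain $(2)\Leftrightarrow(3)\Leftrightarrow(4)$ in one stroke by observing that all three assert that every $S$-pure exact sequence splits; the paper instead gets $(2)\Leftrightarrow(4)$ by citing \cite[33.7, p.279]{Wis91}, so your treatment of $(4)$ is more self-contained. Second, and more substantially, your proof of $(3)\Rightarrow(5)$ is different: the paper deduces from $(3)$ that every module is pure-projective, invokes \cite[Proposition 4.4, p.73]{Azu92} to conclude that $R$ is left Artinian, hence Krull-Schmidt, and then applies Proposition~\ref{prop:4.1(4.12)(a),p.7(a)}; you instead collapse $\textup{Add}(S\cup\{_{R}R\})$ to $\textup{add}(S\cup\{_{R}R\})$ on finitely presented modules, invoke \cite[Theorem 2.5, p.2136]{PuPrRo99} to identify $S$-purity with purity, use $(4)$ to get pure-semisimplicity, and finish with the locality of endomorphism rings of indecomposable pure-injectives plus the Krull--Remak--Schmidt--Azumaya theorem, reusing the indecomposable decomposition supplied by Theorem~\ref{thm:5.5(Theorem 4.22, p.16)}. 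Each route leans on one serious outside result (Azumaya's Artinian theorem for the paper; the purity-comparison theorem of \cite{PuPrRo99} together with locality of endomorphism rings of indecomposable pure-injectives for you); the paper's version has the advantage of factoring through its own Proposition~\ref{prop:4.1(4.12)(a),p.7(a)}, while yours avoids the Artinian/Krull-Schmidt detour and makes the logical equivalence of $(2)$, $(3)$, $(4)$ completely transparent.

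One phrasing in your argument needs repair: ``each module of $S\cup\{_{R}R\}$ is finitely presented, hence a finite direct sum of indecomposables'' is false over a general ring---an infinite product of fields is cyclic as a module over itself yet admits no decomposition into indecomposable submodules. The implication is valid at that point of your proof only because you have already established pure-semisimplicity, under which every module (in particular every finitely presented one) is a direct sum of indecomposables, finite generation then forcing the sum to be finite; you should attribute the decomposition to that, not to finite presentation alone.
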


\begin{proof}
$\left(1\right)\Rightarrow\left(2\right).$ Let $M$ be any left $R$-module
and let $N$ be any $S$-pure submodule of $M$ such that $M/N$ is
 indecomposable. By hypothesis, $M/N$ is  $S$-pure-projective hence
 the $S$-pure exact sequence $0\rightarrow N\overset{i}{\rightarrow}M\overset{\pi}{\rightarrow}M/N\rightarrow0$
splits and hence $N$ is a direct summand of $M.$ By Theorem~\ref{thm:5.5(Theorem 4.22, p.16)}, every $S$-pure submodule of $M$ is a direct summand of $M$.

$\left(2\right)\Rightarrow\left(3\right).$ Let $M$ be any left $R$-module
and let $\Sigma:0\rightarrow L\overset{f}{\rightarrow}N\overset{g}{\rightarrow}M\rightarrow0$
be any $S$-pure exact sequence of left $R$-modules.  By hypothesis, im$(f)$ is a direct
summand of $N$ and hence
$\Sigma$ is split so $M$ is $S$-pure-projective.

$\left(3\right)\Rightarrow\left(5\right).$ Assume that every left
$R$-module is $S$-pure-projective, thus every left $R$-module is
pure-projective. By \cite[Proposition 4.4, p.73]{Azu92}, $R$ is
a left Artinian ring and hence $R$ is  Krull-Schmidt, by \cite[p.164]{Pre09}.
Let $M$ be any left $R$-module. By hypothesis and Proposition~\ref{prop:4.1(4.12)(a),p.7(a)}, $M$ isomorphic to a direct
sum of modules in ind$(S\cup\{_{R}R\}).$ Thus every left $R$-module
is a direct sum of modules in ind$(S\cup\{_{R}R\})$.

$\left(5\right)\Rightarrow\left(1\right).$ Assume that every left
$R$-module is a direct sum of modules in ind$(S\cup\{_{R}R\})$.
Let $M$ be an indecomposable left $R$-module, thus $M$ is a direct
sum of modules in ind$(S\cup\{_{R}R\})$. Since each module in
ind$(S\cup\{_{R}R\})$ is $S$-pure-projective and the class of
$S$-pure-projective left $R$-modules is closed under direct sums
(by \cite[ p.278]{Wis91}) it follows that $M$ is $S$-pure-projective.
Hence every indecomposable left $R$-module is $S$-pure-projective.

$\left(2\right)\Leftrightarrow\left(4\right).$ By using \cite[33.7, p.279]{Wis91}.
\end{proof}

\subparagraph*{Acknowledgements:}

The results of this paper will be  part of  my doctoral thesis at the University of Manchester. I wish to express my sincere thanks to my supervisor Professor Mike Prest, for his valuable help and suggestions and for his help in the preparation of
this paper. Also, I would like to  express my thank and gratitude to the Iraqi government
for sponsoring and giving me the opportunity to study for a  PhD at the University of Manchester.

\end{document}